\newtheorem{definition}{Definition}[section]
\newtheorem{lemma}[definition]{Lemma}
\newtheorem{proposition}[definition]{Proposition}
\newtheorem{remark}[definition]{Remark}
\newtheorem{theorem}[definition]{Theorem}
\newtheorem{example}[definition]{Example}
\def\rawo\lonra{\longrightarrow}
\def\ot{\otimes}
\newcommand{\selabel}[1]{\label{se:#1}}
\newenvironment{proof}{{\it Proof.}}{\hfill $ \square $ \vskip 4mm}
\begin{document}
\title{Twisting operators, twisted tensor products and smash products\\ for Hom-associative algebras}
\author{Abdenacer Makhlouf\\
Universit\'{e} de Haute Alsace, \\
Laboratoire de Math\'{e}matiques, Informatique et Applications, \\
4, Rue des Fr\`{e}res Lumi\`{e}re, F-68093 Mulhouse, France\\
e-mail: Abdenacer.Makhlouf@uha.fr
\and Florin Panaite\thanks {Work supported by a grant of the Romanian National 
Authority for Scientific Research, CNCS-UEFISCDI, 
project number PN-II-ID-PCE-2011-3-0635,  
contract nr. 253/5.10.2011.}\\
Institute of Mathematics of the
Romanian Academy\\
PO-Box 1-764, RO-014700 Bucharest, Romania\\
 e-mail: Florin.Panaite@imar.ro}
\date{}
\maketitle

\begin{abstract}
The purpose of this paper is to provide new constructions of Hom-associative algebras using Hom-analogues of 
certain operators called twistors and pseudotwistors, by deforming a given Hom-associative multiplication into a 
new Hom-associative multiplication. As examples, we introduce Hom-analogues of the twisted tensor product and  
smash product. Furthermore we show that the construction by the ''twisting principle'' introduced by Yau and the twisting 
of associative algebras using pseudotwistors admit a common generalization.
\end{abstract}
\section*{Introduction}
${\;\;\;}$
The motivation to introduce  Hom-type algebras comes for examples related to $q$-deformations of  
Witt and Virasoro 
algebras,
 which play an important r\^{o}le in Physics, mainly in conformal field theory. A $q$-deformation of an algebra 
of vector fields is  obtained when  the  derivation is replaced by a 
$\sigma$-derivation. It was observed in the pioneering works   
\cite{Alvarez,AizawaSaito,ChaiElinPop,ChaiKuLukPopPresn,ChaiIsKuLuk,ChaiPopPres,
CurtrZachos1,DaskaloyannisGendefVir,Hu, Kassel1,LiuKeQin} that  $q$-deformations of Witt and Virasoro 
algebras are no longer  Lie algebras, but satisfy a twisted Jacobi condition.
Motivated by these
examples and their generalization, Hartwig, Larsson and Silvestrov in \cite{HLS,LS1,LS2,LS3} 
introduced the notion of  Hom-Lie algebra as a deformation of Lie algebras in which the Jacobi identity 
is twisted by a homomorphism. The  associative-type objects corresponding to Hom-Lie algebras, called 
 Hom-associative algebras, have been introduced in \cite{ms1}. Usual functors between the categories of 
Lie algebras and associative algebras have been extended to the Hom-setting. It was shown  
that a commutator of a Hom-associative algebra gives rise to a Hom-Lie algebra; the construction of the free 
Hom-associative algebra and the  enveloping algebra of a Hom-Lie algebra have been provided in 
\cite{Yau:EnvLieAlg}. Since then, Hom-analogues of various classical structures and results  have 
been introduced and discussed by many authors. For instance, representation theory,   cohomology and 
deformation theory  for Hom-associative algebras and Hom-Lie algebras have been developed in 
\cite{AEM,ms2,Sheng}. For further results see \cite{ AmmarMakhlouf2009,BenayadiMakhlouf,LiuChenMa,Mak:Almeria} and  \cite{fgs, Gohr} for other properties of Hom-associative algebras.
All these generalizations coincide with the usual definitions when the structure map equals the identity.

The dual concept of Hom-associative algebras, called Hom-coassociative coalgebras, as well as 
Hom-bialgebras and Hom-Hopf algebras, have been introduced in \cite{ms3,ms4} and also studied  in 
\cite{stef,yau2}.  As expected, the enveloping Hom-associative algebra of a Hom-Lie algebra is 
naturally a Hom-bialgebra. A twisted version of module algebras called module Hom-algebras has 
been studied in \cite{yau1}, 
where $q$-deformations of the $\mathfrak{sl}(2)$-action on the affine plane were provided. 
Objects admitting coactions by Hom-bialgebras have been studied first in \cite{yau2}. 
 In \cite{yauhomyb1,yauhomyb2,yauhomyb3}, various generalizations of Yang-Baxter equations and 
related algebraic structures have been studied. D. Yau provided solutions of HYBE, a twisted 
version of the Yang-Baxter equation called the Hom-Yang-Baxter equation, from Hom-Lie algebras, 
quantum enveloping algebra of $\mathfrak{sl}(2)$, the Jones-Conway polynomial, 
Drinfeld's (co)quasitriangular bialgebras and Yetter-Drinfeld modules (over bialgebras). 
Yetter-Drinfeld modules over Hom-bialgebras and their category have been studied in \cite{PanaiteMakhlouf}. 
For further results about generalizations of quantum groups and related structures see \cite{BEM,homquantum1,homquantum2,homquantum3}.
In \cite{Elhamdadi-Makhlouf}, Hom-quasi-bialgebras have been introduced and concepts like gauge 
transformation and Drinfeld twist  generalized.
Moreover,  an example of a twisted quantum double was provided.  

One of the main tools to construct
examples of  Hom-type algebras is the ''twisting principle'' (called sometimes ''composition method''). 
It was introduced by D. Yau for Hom-associative algebras and since then 
extended to various Hom-type algebras. It allows to construct a Hom-type algebra starting from a 
classical-type algebra and an algebra homomorphism.

The twisted tensor product $A\ot _RB$ of two associative algebras $A$ and $B$ is a certain algebra 
structure on the vector space $A\ot B$, defined in terms of a so-called twisting map $R:B\ot A\rightarrow 
A\ot B$, having the property that it coincides with the usual tensor product algebra $A\ot B$ if $R$ is the usual flip 
map. This construction was introduced in \cite{Cap,VanDaele} and it may be regarded as a 
representative for the Cartesian product of noncommutative spaces. An important example of a twisted 
tensor product of associative algebras is a smash product $A\# H$, where $H$ is a bialgebra and 
$A$ is a left $H$-module algebra. Motivated by the desire to express the multiplication of 
$A\ot _RB$ as a ''deformation'' of the multiplication 
of $A\ot B$, in \cite{lpvo} was introduced the concept of {\em pseudotwistor} (with a particular case 
called {\em twistor}) for an associative algebra $D$, with multiplication $\mu :D\ot D\rightarrow D$, 
as a linear map $T:D\ot D\rightarrow D\ot D$ 
satisfying some axioms that imply that the new multiplication $\mu \circ T$ on $D$ is also associative. It turns 
out that many other ''deformed multiplications'' that appear in the literature (such as twisted bialgebras and 
Fedosov products) are afforded by such pseudotwistors.

The aim of this paper is to introduce Hom-analogues of twistors, pseudotwistors and twisted tensor products and 
to use them to obtain new Hom-associative algebras starting with one or more given Hom-associative algebras. 

The paper is organized as follows. In Section 1 we review the main definitions and results about  
twisting associative algebras by means of twistors and pseudotwistors and the basics on  Hom-associative algebras, 
Hom-bialgebras and related structures. In Section 2 we introduce the concepts of Hom-twistor, 
Hom-pseudotwistor, Hom-twisting map and Hom-twisted tensor product of Hom-associative algebras; we prove 
that these concepts are compatible with the twisting principle and that the Hom-twisted tensor product can be 
iterated.  Section 3 deals with smash products in the Hom-setting. Given a Hom-bialgebra $H$ and a 
left (respectively right)  $H$-module Hom-algebra $A$ (respectively $C$) such that all structure maps 
$\alpha _H$, $\alpha _A$ (respectively $\alpha _C$) are bijective, we define in a natural way a Hom-twisting 
map $R$ between $A$ and $H$ (respectively between $H$ and $C$) and a Hom-associative algebra 
$A\# H:=A\otimes_R H$ (respectively $H\#C:=H\ot _RC$), called the left (respectively right) 
Hom-smash product. Given both $A$ and $C$ as above, we define also the so-called two-sided Hom-smash product 
$A\# H\# C$. 

In the last section we show that  Yau's procedure of obtaining a Hom-associative algebra from an associative 
algebra via the twisting principle and the procedure of obtaining a new associative algebra from a given 
associative algebra via a pseudotwistor admit a common generalization, by means of a new concept called $\alpha$-pseudotwistor, where $\alpha $ is an algebra endomorphism of an associative algebra.
\section{Preliminaries}\selabel{1}
${\;\;\;}$
We work over a base field $k$. All algebras, linear spaces
etc. will be over $k$; unadorned $\ot $ means $\ot_k$. For a comultiplication 
$\Delta :C\rightarrow C\ot C$ on a vector space $C$ we use a 
Sweedler-type notation $\Delta (c)=c_1\ot c_2$, for $c\in C$. Unless 
otherwise specified, the (co)algebras ((co)associative or not) that will appear 
in what follows are {\em not} supposed to be (co)unital, and a multiplication 
$\mu :V\ot V\rightarrow V$ on a linear space $V$ is denoted by juxtaposition: 
$\mu (v\ot v')=vv'$. 

We recall some concepts and results, fixing the terminology 
to be used throughout the paper. 
\begin{definition}(\cite{Cap}, \cite{VanDaele}) Let $(A, \mu _A)$, $(B, \mu _B)$ 
be two associative algebras. A {\em twisting map} between $A$ and $B$ is a linear map $R:B\ot A 
\rightarrow A\ot B$ satisfying the conditions: 
\begin{eqnarray}
&&R\circ (id_B\ot \mu _A)=(\mu _A\ot id_B)\circ (id_A\ot R)\circ (R\ot id_A),
\label{twmap1}\\
&&R\circ (\mu _B\ot id_A)=(id_A\ot \mu _B)\circ (R\ot id_B)\circ (id_B\ot R).
\label{twmap2}
\end{eqnarray}
If this is the case, the map $\mu _R=(\mu _A\ot \mu _B)\circ (id_A\ot R\ot id_B)$ 
is an associative product on $A\ot B$; the associative algebra $(A\ot B, \mu _R)$ 
is denoted by $A\ot _RB$ and called the {\em twisted tensor product} of $A$ and $B$ 
afforded by $R$. 
\end{definition}

If we use a Sweedler-type notation $R(b\ot a)=a_R\ot b_R=a_r\ot b_r$, for $a\in A$, $b\in B$, then
(\ref{twmap1}) and (\ref{twmap2}) may be rewritten as:
\begin{eqnarray}
&&(aa')_R\ot b_R=a_Ra'_r\ot (b_R)_r, \label{tw4} \\
&&a_R\ot (bb')_R=(a_R)_r\ot b_rb'_R, \label{tw5}
\end{eqnarray}
and the multiplication of $A\ot _RB$ may be written as $(a\ot b)(a'\ot b')=aa'_R\ot b_Rb'$. 

\begin{example} We construct  a twisted  tensor product of  $k^2\otimes k^2$.  
The multiplication of $k^2$ with respect to $\{e_1,e_2\} $ is defined as $e_ie_j=\delta_{i,j}e_i$ for 
$i,j=1,2$, where $\delta$ is the Kronecker symbol. 
We provide a one-parameter family of twisting maps ($\lambda $ is a parameter in $k$):
\begin{eqnarray*}
&& R(e_1\otimes e_1)=\lambda e_1\otimes e_1+\lambda e_1\otimes e_2+\lambda e_2\otimes e_1+(\lambda -1) e_2\otimes e_2,\\
&& R(e_1\otimes e_2)=(1-\lambda) e_1\otimes e_1-\lambda e_1\otimes e_2+(1-\lambda) e_2\otimes e_1+(1-\lambda ) e_2\otimes e_2,\\
&& R(e_2\otimes e_1)=(1-\lambda) e_1\otimes e_1+(1-\lambda) e_1\otimes e_2-\lambda e_2\otimes e_1+(1-\lambda ) e_2\otimes e_2,\\
&&  R(e_2\otimes e_2)=(\lambda-1) e_1\otimes e_1+\lambda e_1\otimes e_2+\lambda e_2\otimes e_1+\lambda  e_2\otimes e_2.
\end{eqnarray*}
Therefore, we obtain the following new multiplication on  $k^2\otimes k^2$:
\begin{center}
\begin{tabular}{c|cccc}
\ & $e_1\otimes e_1$&$ e_1\otimes e_2 $& $e_2\otimes e_1$ &$ e_2\otimes e_2$\\
\hline 
$e_1\otimes e_1 $&$\lambda  e_1\otimes e_1$&$ \lambda e_1\otimes e_2 $ &$(1-\lambda) e_1\otimes e_1 $&$-\lambda  e_1\otimes e_2$ \\
$e_1\otimes e_2 $&$(1-\lambda)  e_1\otimes e_1$&$(1- \lambda) e_1\otimes e_2 $ &$(\lambda-1) e_1\otimes e_1 $&$\lambda  e_1\otimes e_2$ \\
$e_2\otimes e_1 $&$\lambda  e_2\otimes e_1$&$( \lambda-1) e_2\otimes e_2 $ &$(1-\lambda) e_2\otimes e_1 $&$(1-\lambda)  e_2\otimes e_2$ \\
$e_2\otimes e_2 $&$-\lambda  e_2\otimes e_1$&$(1- \lambda) e_2\otimes e_2 $ &$\lambda e_2\otimes e_1 $&$\lambda  e_2\otimes e_2$
\end{tabular}
\end{center}
\label{default}
\end{example}

The following two concepts are versions for nonunital algebras of the ones introduced in \cite{lpvo}: 

\begin{definition} 
Let $(D, \mu )$ be an associative algebra and $T:D\otimes
D\rightarrow D\otimes D$ a linear map. Assume that there exist 
two linear maps $\tilde{T}_1,
\tilde{T}_2:D\otimes D\otimes D \rightarrow D\otimes D\otimes D$ such that the following 
conditions are satisfied:
\begin{eqnarray}
&&T\circ (id_D\otimes \mu )=
(id_D\otimes \mu )\circ \tilde{T}_1\circ (T\otimes id_D),
\label{pstw1} \\
&&T\circ (\mu \otimes id_D)=
(\mu \otimes id_D)\circ \tilde{T}_2\circ (id_D\otimes T),
\label{pstw2} \\
&&\tilde{T}_1\circ (T\otimes id_D)\circ (id_D\otimes T)=
\tilde{T}_2\circ (id_D\otimes T)\circ (T\otimes id_D). \label{pstw3}
\end{eqnarray}
Then $D^T:=(D, \mu \circ T)$ is also an associative algebra. The map $T$ is called a
{\em pseudotwistor} and the two maps $\tilde{T}_1$,
$\tilde{T}_2$ are called the {\em companions} of $T$.
\end{definition}
\begin{definition} 
Let $(D, \mu )$ be an associative algebra and $T:D\otimes
D\rightarrow D\otimes D$ a linear map, with Sweedler-type notation $T(d\ot d')=d^T\ot d'_T$, 
for $d, d'\in D$,  satisfying the following conditions:
\begin{eqnarray}
&&T\circ (id_D\otimes \mu )=
(id_D\otimes \mu )\circ T_{13}\circ T_{12},
\label{twistor1} \\
&&T\circ (\mu \otimes id_D)=
(\mu \otimes id_D)\circ T_{13}\circ T_{23},
\label{twistor2} \\
&&T_{12}\circ T_{23}=T_{23}\circ T_{12},  \label{twistor3}
\end{eqnarray}
where we used a standard notation for the operators $T_{ij}$, namely $T_{12}=T\ot id_D$, 
$T_{23}=id_D\ot T$ and $T_{13}(d\ot d'\ot d'')=d^T\ot d'\ot d''_T$. 
Then $D^T:=(D, \mu \circ T)$ is also an associative algebra, and the map $T$ is called a
{\em twistor} for $D$. 
\end{definition}

Obviously, any twistor $T$ is a pseudotwistor with companions 
$\tilde{T}_1=\tilde{T}_2=T_{13}$. 

If $A\ot _RB$ is a twisted tensor product of associative algebras, the map 
$T:(A\ot B)\ot (A\ot B)\rightarrow (A\ot B)\ot (A\ot B)$, $T((a\ot b)\ot (a'\ot b'))=
(a\ot b_R)\ot (a'_R\ot b')$, is a twistor for the ordinary tensor product algebra $A\ot B$ 
and $A\ot _RB=(A\ot B)^T$ as associative algebras, cf. \cite{lpvo}. 

We recall now several things about Hom-structures. Since various authors use different terminology, 
some caution is necessary. In what follows, we use terminology as in our previous paper 
\cite{PanaiteMakhlouf}.
\begin{definition}
(i) A {\em Hom-associative algebra} is a triple $(A, \mu , \alpha )$, in which $A$ is a linear space, 
$\alpha :A\rightarrow A$  and $\mu :A\ot A\rightarrow A$ are linear maps,  
with notation $\mu (a\ot a')=aa'$, satisfying the following conditions, for all $a, a', a''\in A$:
\begin{eqnarray*}
&&\alpha (aa')=\alpha (a)\alpha (a'), \;\;\;\;\;(multiplicativity)\\
&&\alpha (a)(a'a'')=(aa')\alpha (a''). \;\;\;\;\;(Hom-associativity)
\end{eqnarray*}
We call $\alpha $ the {\em structure map} of $A$. 

A morphism $f:(A, \mu _A , \alpha _A)\rightarrow (B, \mu _B , \alpha _B)$ of Hom-associative algebras 
is a linear map $f:A\rightarrow B$ such that $\alpha _B\circ f=f\circ \alpha _A$ and 
$f\circ \mu_A=\mu _B\circ (f\ot f)$. \\
(ii) A {\em Hom-coassociative coalgebra} is a triple $(C, \Delta, \alpha )$, in which $C$ is a linear 
space, $\alpha :C\rightarrow C$ and $\Delta :C\rightarrow C\ot C$ are linear maps, 
satisfying the following conditions:
\begin{eqnarray*}
&&(\alpha \ot \alpha )\circ \Delta =
\Delta \circ \alpha , \;\;\;\;\;(comultiplicativity)\\  
&&(\Delta \ot \alpha )\circ \Delta =
(\alpha \ot \Delta )\circ \Delta . \;\;\;\;\;(Hom-coassociativity)
\end{eqnarray*}

A morphism $g:(C, \Delta _C , \alpha _C)\rightarrow (D, \Delta _D , \alpha _D)$ of Hom-coassociative 
coalgebras  
is a linear map $g:C\rightarrow D$ such that $\alpha _D\circ g=g\circ \alpha _C$ and 
$(g\ot g)\circ \Delta _C=\Delta _D\circ g$.
\end{definition}
\begin{remark}
Assume that $(A, \mu _A , \alpha _A)$ and $(B, \mu _B, \alpha _B)$ are two Hom-associative algebras; then 
$(A\ot B, \mu _{A\ot B}, \alpha _A\ot \alpha _B)$ is a Hom-associative algebra (called the tensor 
product of $A$ and $B$), where $\mu _{A\ot B}$ is the usual multiplication: 
$(a\ot b)(a'\ot b')=aa'\ot bb'$. 
\end{remark}
\begin{definition} (\cite{yau1}, \cite{homquantum3}) 
(i) Let $(A, \mu _A , \alpha _A)$ be a Hom-associative algebra, $M$ a linear space and $\alpha _M:M
\rightarrow M$ a linear map. A {\em left $A$-module} structure on $(M, \alpha _M)$ consists of a linear map 
$A\ot M\rightarrow M$, $a\ot m\mapsto a\cdot m$, satisfying the conditions:
\begin{eqnarray}
&&\alpha _M(a\cdot m)=\alpha _A(a)\cdot \alpha _M(m), \label{hommod1}\\
&&\alpha _A(a)\cdot (a'\cdot m)=(aa')\cdot \alpha _M(m), \label{hommod2}
\end{eqnarray} 
for all $a, a'\in A$ and $m\in M$. If $(M, \alpha _M)$ and $(N, \alpha _N)$ are left $A$-modules (both 
$A$-actions denoted by $\cdot$),  
a morphism of left $A$-modules $f:M\rightarrow N$ is a linear map satisfying the conditions 
$\alpha _N\circ f=f\circ \alpha _M$  and $f(a\cdot m)=a\cdot f(m)$, for all $a\in A$ and $m\in M$. \\
(ii) Let $(C, \Delta _C , \alpha _C)$ be a Hom-coassociative coalgebra, $M$ a linear space and $\alpha _M:M
\rightarrow M$ a linear map. A {\em left $C$-comodule} structure on $(M, \alpha _M)$ consists of a linear map 
$\lambda :M\rightarrow C\ot M$ satisfying the following conditions:
\begin{eqnarray}
&&(\alpha _C\ot \alpha _M)\circ \lambda =\lambda \circ \alpha _M,  
\label{leftcom1}\\
&&(\Delta _C\ot \alpha _M)\circ \lambda =(\alpha _C\ot \lambda )\circ \lambda . 
\label{leftcom2}
\end{eqnarray} 
If $(M, \alpha _M)$ and $(N, \alpha _N)$ are left $C$-comodules, with structures 
$\lambda _M:M\rightarrow C\ot M$ and $\lambda _N:N\rightarrow C\ot N$, 
a morphism of left $C$-comodules $g:M\rightarrow N$ is a linear map satisfying the conditions 
$\alpha _N\circ g=g\circ \alpha _M$  and $(id_C\ot g)\circ \lambda _M=\lambda _N\circ g$. 
\end{definition}

\begin{definition} (\cite{ms3}, \cite{ms4})
A {\em Hom-bialgebra} is a quadruple $(H, \mu , \Delta, \alpha )$, in which $(H, \mu , \alpha )$ is 
a Hom-associative algebra, $(H, \Delta , \alpha )$ is a Hom-coassociative coalgebra  
and moreover $\Delta $ is a morphism of Hom-associative algebras.  
\end{definition}

In other words, a Hom-bialgebra is a Hom-associative algebra $(H, \mu , \alpha )$ endowed with a 
linear map $\Delta :H\rightarrow H\ot H$, with notation $\Delta (h)=h_1\ot h_2$, such that the 
following conditions are satisfied, for all $h, h'\in H$: 
\begin{eqnarray}
&&\Delta (h_1)\ot \alpha (h_2)=\alpha (h_1)\ot \Delta (h_2),  \label{hombia1}\\
&&\Delta (hh')=h_1h'_1\ot h_2h'_2, \label{hombia2}\\
&&\Delta (\alpha (h))=\alpha (h_1)\ot \alpha (h_2). \label{hombia3}
\end{eqnarray}
\begin{proposition} (\cite{ms4}, \cite{yau3}) \label{yautwisting}
(i) Let $(A, \mu )$ be an associative algebra and $\alpha :A\rightarrow A$ an algebra endomorphism. Define 
a new multiplication $\mu _{\alpha }:=\alpha \circ \mu :A\ot A\rightarrow A$. Then 
$(A, \mu _{\alpha }, \alpha )$ is a Hom-associative algebra, denoted by $A_{\alpha }$. \\
(ii) Let $(C, \Delta )$ be a coassociative coalgebra and $\alpha :C\rightarrow C$ a 
coalgebra endomorphism. Define 
a new comultiplication $\Delta _{\alpha }:=\Delta \circ \alpha :C\rightarrow C\ot C$. Then 
$(C, \Delta _{\alpha }, \alpha )$ is a Hom-coassociative coalgebra, denoted by $C_{\alpha }$. \\
(iii) Let $(H, \mu , \Delta )$ be a bialgebra and $\alpha :H\rightarrow H$ a bialgebra endomorphism. 
If we define $\mu _{\alpha }$ and $\Delta _{\alpha }$ as in (i) and (ii), then 
$H_{\alpha }=(H, \mu _{\alpha }, \Delta _{\alpha }, \alpha )$ is a Hom-bialgebra. 
\end{proposition}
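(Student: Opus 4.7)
The plan is to verify each axiom by rewriting the Hom-version in terms of $\mu$, $\Delta$ and $\alpha$ and then using that $\alpha$ is an algebra (resp.\ coalgebra, bialgebra) endomorphism to push the $\alpha$'s past the (co)multiplication, after which the classical (co)associativity of $(A,\mu)$, $(C,\Delta)$, $(H,\mu,\Delta)$ does the work. I expect no genuine obstacle; the work is bookkeeping of where $\alpha$ gets inserted.

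For (i), multiplicativity of $\alpha$ with respect to $\mu_\alpha$ reduces, after unfolding the definition, to $\alpha(\alpha(aa'))=\alpha(\alpha(a)\alpha(a'))$, which is immediate from $\alpha\circ\mu=\mu\circ(\alpha\ot\alpha)$. For Hom-associativity, I would compute the left and right sides separately:
\begin{eqnarray*}
\mu_\alpha(\alpha(a),\mu_\alpha(a',a''))&=&\alpha(\alpha(a)\cdot\alpha(a'a''))=\alpha^2(a(a'a'')),\\
\mu_\alpha(\mu_\alpha(a,a'),\alpha(a''))&=&\alpha(\alpha(aa')\cdot\alpha(a''))=\alpha^2((aa')a''),
\end{eqnarray*}
and these agree by the usual associativity of $\mu$. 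Part (ii) is entirely dual: comultiplicativity of $\alpha$ with respect to $\Delta_\alpha=\Delta\circ\alpha$ follows from $(\alpha\ot\alpha)\circ\Delta=\Delta\circ\alpha$ applied once, while Hom-coassociativity reduces to
$$(\Delta\ot id)\circ\Delta\circ\alpha^2=(id\ot\Delta)\circ\Delta\circ\alpha^2$$
after pushing the outer $\alpha$'s through $\Delta$ on both sides, and this is just ordinary coassociativity of $\Delta$ composed with $\alpha^2$.

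For (iii), parts (i) and (ii) already give the Hom-associative algebra and Hom-coassociative coalgebra structures on $H_\alpha$, so it remains to verify the three compatibility conditions (\ref{hombia1})--(\ref{hombia3}) for $(\mu_\alpha,\Delta_\alpha,\alpha)$. Condition (\ref{hombia3}) is immediate from $(\alpha\ot\alpha)\circ\Delta=\Delta\circ\alpha$ applied to $\alpha(h)$. Condition (\ref{hombia1}) for $\Delta_\alpha$ amounts to $(\Delta\ot id)\circ\Delta\circ\alpha^2=(id\ot\Delta)\circ\Delta\circ\alpha^2$ after the same shuffling of $\alpha$'s as in (ii), which is classical coassociativity. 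For (\ref{hombia2}), the condition $\Delta_\alpha(hh')=(hh')_{\alpha,1}\ot(hh')_{\alpha,2}$ in the new structure unfolds to
$$\Delta(\alpha(\alpha(hh')))\stackrel{?}{=}\Delta(\alpha(hh'))\cdot_{H\ot H}\,\text{(second factor)},$$
and, more cleanly, reduces after pulling $\alpha$'s out to the classical bialgebra compatibility $\Delta(hh')=\Delta(h)\Delta(h')$ in $H$ applied to $\alpha(h)$ and $\alpha(h')$. The mildly delicate point, and the one I would write out carefully, is keeping track of how many copies of $\alpha$ appear on each side when $\mu_\alpha=\alpha\circ\mu$ and $\Delta_\alpha=\Delta\circ\alpha$ are combined; once both sides are rewritten as $\Delta\circ\alpha$ applied to $\alpha(h)\alpha(h')$, equality with $h_1h'_1\ot h_2h'_2$ twisted by $\alpha$ on each tensor factor follows from the bialgebra axioms and the endomorphism property of $\alpha$.
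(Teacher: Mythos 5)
Your verification is correct: parts (i) and (ii) are the standard two-line computations, and for (iii) the key identity $\Delta(\alpha^2(h)\alpha^2(h'))=\alpha(\alpha(h)_1\alpha(h')_1)\ot\alpha(\alpha(h)_2\alpha(h')_2)$ does follow, as you say, from $\Delta$ being an algebra map and $\alpha$ a bialgebra map, which is exactly what conditions (\ref{hombia1})--(\ref{hombia3}) for $(\mu_\alpha,\Delta_\alpha,\alpha)$ require. The paper itself gives no proof (it recalls this as Yau's twisting principle, citing \cite{ms4} and \cite{yau3}), so there is nothing to compare against; the only cosmetic flaw is the half-formed display in your treatment of (\ref{hombia2}), which should be replaced by the explicit identity above, but the surrounding argument is sound.
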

\begin{proposition} (\cite{homquantum3})
Let $(H, \mu _H, \Delta _H, \alpha _H)$ be a Hom-bialgebra and $(M, \alpha _M)$ and 
$(N, \alpha _N)$ two left $H$-modules. Then $(M\ot N, \alpha _M\ot \alpha _N)$ is also a 
left $H$-module, with $H$-action defined by $H\ot (M\ot N)\rightarrow M\ot N$, $h\ot (m\ot n)\mapsto 
h\cdot (m\ot n):=h_1\cdot m\ot h_2\cdot n$. 
\end{proposition}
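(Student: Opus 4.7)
The plan is a direct verification of the two axioms \eqref{hommod1} and \eqref{hommod2} for the proposed action $h\cdot (m\ot n):=h_1\cdot m\ot h_2\cdot n$ on $(M\ot N, \alpha_M\ot \alpha_N)$. The only ingredients needed are the module axioms for $M$ and $N$ together with the Hom-bialgebra compatibilities \eqref{hombia2} and \eqref{hombia3}; there is no real obstacle, only bookkeeping with Sweedler notation.

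For the multiplicativity axiom \eqref{hommod1}, I would compute
\begin{eqnarray*}
(\alpha_M\ot \alpha_N)(h\cdot (m\ot n)) &=& \alpha_M(h_1\cdot m)\ot \alpha_N(h_2\cdot n) \\
&=& \alpha_H(h_1)\cdot \alpha_M(m)\ot \alpha_H(h_2)\cdot \alpha_N(n),
\end{eqnarray*}
using \eqref{hommod1} for $M$ and $N$ separately, and then rewrite the right hand side as $\alpha_H(h)_1\cdot \alpha_M(m)\ot \alpha_H(h)_2\cdot \alpha_N(n)=\alpha_H(h)\cdot (\alpha_M(m)\ot \alpha_N(n))$ by invoking \eqref{hombia3}.

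For the Hom-associativity axiom \eqref{hommod2}, I would expand
\begin{eqnarray*}
\alpha_H(h)\cdot (h'\cdot (m\ot n)) &=& \alpha_H(h)_1\cdot (h'_1\cdot m)\ot \alpha_H(h)_2\cdot (h'_2\cdot n)\\
&=& \alpha_H(h_1)\cdot (h'_1\cdot m)\ot \alpha_H(h_2)\cdot (h'_2\cdot n)\\
&=& (h_1h'_1)\cdot \alpha_M(m)\ot (h_2h'_2)\cdot \alpha_N(n),
\end{eqnarray*}
where the second equality uses \eqref{hombia3} and the third uses \eqref{hommod2} for each factor. By \eqref{hombia2} one has $(hh')_1\ot (hh')_2=h_1h'_1\ot h_2h'_2$, so the last expression equals $(hh')\cdot (\alpha_M(m)\ot \alpha_N(n))=(hh')\cdot ((\alpha_M\ot \alpha_N)(m\ot n))$, as required.

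The mildly subtle point, and the only thing worth flagging, is the use of \eqref{hombia3} to commute $\alpha_H$ past the Sweedler indices; once this identification is in hand, both axioms reduce to the corresponding identities on each tensor factor. No further structure (e.g.\ invertibility of $\alpha_H$) is needed.
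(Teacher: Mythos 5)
Your verification is correct: the paper states this result as a citation to Yau (\cite{homquantum3}) without reproducing a proof, and your direct check of (\ref{hommod1}) and (\ref{hommod2}) using (\ref{hombia3}) to identify $\alpha_H(h)_1\ot\alpha_H(h)_2$ with $\alpha_H(h_1)\ot\alpha_H(h_2)$ and (\ref{hombia2}) to identify $(hh')_1\ot(hh')_2$ with $h_1h'_1\ot h_2h'_2$ is exactly the standard argument. Nothing further (in particular no bijectivity of $\alpha_H$) is needed, as you note.
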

\begin{definition} (\cite{yau2})
Let $(H, \mu _H, \Delta _H, \alpha _H)$ be a Hom-bialgebra. A {\em left $H$-comodule Hom-algebra} 
is a Hom-associative algebra $(D, \mu _D, \alpha _D)$ endowed with a left $H$-comodule 
structure $\lambda _D:D\rightarrow H\ot D$ such that $\lambda _D$ is a morphism of 
Hom-associative algebras.
\end{definition}
\begin{definition} (\cite{yau1}) 
Let $(H, \mu _H, \Delta _H, \alpha _H)$ be a Hom-bialgebra. A Hom-associative algebra 
$(A, \mu _A, \alpha _A)$ is called a {\em left $H$-module Hom-algebra} if $(A, \alpha _A)$ is a 
left $H$-module, 
with action denoted by $H\ot A\rightarrow A$, $h\ot a\mapsto h\cdot a$, such that the following 
condition is satisfied: 
\begin{eqnarray}
&&\alpha _H^2(h)\cdot (aa')=(h_1\cdot a)(h_2\cdot a'), \;\;\;\forall \;h\in H, \;a, a'\in A. \label{modalgcompat}
\end{eqnarray} 
\end{definition}

One may wonder why it was chosen $\alpha _H^2$ in the above formula (and not, for instance, 
$\alpha _H$). The answer is provided by the following result:
\begin{proposition} (\cite{yau1}) \label{deformmodalg}
Let $(H, \mu _H, \Delta _H)$ be a bialgebra and $(A, \mu _A)$ a left $H$-module algebra in the usual sense, 
with action denoted by $H\ot A\rightarrow A$, $h\ot a\mapsto h\cdot a$. Let $\alpha _H:H\rightarrow H$ 
be a bialgebra endomorphism and $\alpha _A:A\rightarrow A$ an algebra endomorphism, such that 
$\alpha _A(h\cdot a)=\alpha _H(h)\cdot \alpha _A(a)$, for all $h\in H$ and $a\in A$. If we 
consider the Hom-bialgebra 
$H_{\alpha _H}=(H, \alpha _H\circ \mu _H, \Delta _H\circ \alpha _H, \alpha _H)$ and 
the Hom-associative algebra $A_{\alpha _A}=(A, \alpha _A\circ \mu _A, \alpha _A)$, 
then $A_{\alpha _A}$ is a left $H_{\alpha _H}$-module Hom-algebra in the above sense, with action 
$H_{\alpha _H}\ot A_{\alpha _A}\rightarrow A_{\alpha _A}$, $h\ot a\mapsto h\triangleright a:=
\alpha _A(h\cdot a)=\alpha _H(h)\cdot \alpha _A(a)$. 
\end{proposition}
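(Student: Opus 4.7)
The plan is to verify directly the three axioms that make $A_{\alpha_A}$ a left $H_{\alpha_H}$-module Hom-algebra in the sense of the paper: compatibility of the action with the structure maps (\ref{hommod1}), the Hom-associativity of the action (\ref{hommod2}), and the module-Hom-algebra identity (\ref{modalgcompat}). The first observation is that the two expressions $\alpha_A(h\cdot a)$ and $\alpha_H(h)\cdot \alpha_A(a)$ defining $h\triangleright a$ agree by the hypothesis $\alpha_A(h\cdot a)=\alpha_H(h)\cdot \alpha_A(a)$; iterating this hypothesis gives the handy identity $\alpha_A^k(h\cdot a)=\alpha_H^k(h)\cdot \alpha_A^k(a)$ for all $k\geq 1$, which will be used repeatedly.

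For (\ref{hommod1}), I would compute both $\alpha_A(h\triangleright a)=\alpha_A^2(h\cdot a)$ and $\alpha_H(h)\triangleright \alpha_A(a)=\alpha_A(\alpha_H(h)\cdot \alpha_A(a))$; both collapse to $\alpha_H^2(h)\cdot \alpha_A^2(a)$ by the iterated compatibility, so they match. For (\ref{hommod2}), I would expand $\alpha_H(h)\triangleright (h'\triangleright a)=\alpha_A(\alpha_H(h)\cdot \alpha_A(h'\cdot a))$, push $\alpha_A$ inside via compatibility to turn the inner factor into $\alpha_H(h')\cdot \alpha_A(a)$, then use the ordinary module axiom $\alpha_H(h)\cdot(\alpha_H(h')\cdot \alpha_A(a))=(\alpha_H(h)\alpha_H(h'))\cdot \alpha_A(a)=\alpha_H(hh')\cdot \alpha_A(a)$ since $\alpha_H$ is an algebra endomorphism. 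The right-hand side $(hh')_{\alpha_H}\triangleright \alpha_A(a)=\alpha_H(hh')\triangleright \alpha_A(a)=\alpha_A(\alpha_H(hh')\cdot \alpha_A(a))$ matches this by definition of $\mu_{\alpha_H}$.

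The third identity (\ref{modalgcompat}) is the most involved; here the main obstacle is keeping track of how many times $\alpha_H$ and $\alpha_A$ get applied and how Sweedler indices transform under the new comultiplication $\Delta_{\alpha_H}=\Delta_H\circ \alpha_H$. Using that $\alpha_H$ is a coalgebra morphism, $\Delta_{\alpha_H}(h)=\alpha_H(h_1)\otimes \alpha_H(h_2)$, so the right-hand side of (\ref{modalgcompat}) in $A_{\alpha_A}$ reads $\alpha_A\bigl(\alpha_A(\alpha_H(h_1)\cdot a)\cdot_A \alpha_A(\alpha_H(h_2)\cdot a')\bigr)$. Applying compatibility inside and then using that $\alpha_A$ is multiplicative, this becomes $(\alpha_H^3(h_1)\cdot \alpha_A^2(a))(\alpha_H^3(h_2)\cdot \alpha_A^2(a'))$. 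For the left-hand side, $\alpha_H^2(h)\triangleright \alpha_A(aa')=\alpha_A(\alpha_H^2(h)\cdot \alpha_A(aa'))=\alpha_H^3(h)\cdot \alpha_A^2(aa')$; then apply the ordinary module-algebra axiom in $A$ together with multiplicativity of $\alpha_A$ and $\Delta_H\circ \alpha_H^3=(\alpha_H^3\otimes \alpha_H^3)\circ \Delta_H$ to obtain $(\alpha_H^3(h_1)\cdot \alpha_A^2(a))(\alpha_H^3(h_2)\cdot \alpha_A^2(a'))$, matching the right-hand side.

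The argument is entirely bookkeeping: the main pitfall is the asymmetric shift between $\alpha_H^2$ on the module side and $\Delta_{\alpha_H}=\Delta_H\circ \alpha_H$ on the coalgebra side, which is precisely why the exponent $2$ (rather than $1$) appears in the module-Hom-algebra axiom (\ref{modalgcompat}); once one counts correctly, every application of $\alpha_A$ to $h\cdot a$ simultaneously raises the exponents on both $\alpha_H$ and $\alpha_A$, and the identity falls out.
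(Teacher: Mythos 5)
Your proposal is correct: the three axioms (\ref{hommod1}), (\ref{hommod2}) and (\ref{modalgcompat}) are exactly what must be checked, and your bookkeeping — using the iterated compatibility $\alpha _A^k(h\cdot a)=\alpha _H^k(h)\cdot \alpha _A^k(a)$, the identification $\Delta _{\alpha _H}(h)=\alpha _H(h_1)\ot \alpha _H(h_2)$, and the collapse of both sides of (\ref{modalgcompat}) to $(\alpha _H^3(h_1)\cdot \alpha _A^2(a))(\alpha _H^3(h_2)\cdot \alpha _A^2(a'))$ — is accurate. The paper itself cites this result from \cite{yau1} without reproducing a proof, and your direct verification is precisely the standard argument, so there is nothing further to compare.
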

\section{Hom-pseudotwistors and Hom-twisted tensor products}
\setcounter{equation}{0}
${\;\;\;}$We begin by introducing the Hom-analogues of twistors and pseudotwistors. 
\begin{proposition} \label{hompseudotwistor}
Let $(D, \mu , \alpha )$ be a Hom-associative algebra and $T:D\otimes
D\rightarrow D\otimes D$ a linear map. Assume that there exist 
two linear maps $\tilde{T}_1,
\tilde{T}_2:D\otimes D\otimes D \rightarrow D\otimes D\otimes D$ such that the following 
relations hold:
\begin{eqnarray}
&&(\alpha \ot \alpha )\circ T=T\circ (\alpha \ot \alpha ), \label{hommultT}\\
&&T\circ (\alpha \otimes \mu )=
(\alpha \otimes \mu )\circ \tilde{T}_1\circ (T\otimes id_D),
\label{hompstw1} \\
&&T\circ (\mu \otimes \alpha )=
(\mu \otimes \alpha )\circ \tilde{T}_2\circ (id_D\otimes T),
\label{hompstw2} \\
&&\tilde{T}_1\circ (T\otimes id_D)\circ (id_D\otimes T)=
\tilde{T}_2\circ (id_D\otimes T)\circ (T\otimes id_D). \label{hompstw3}
\end{eqnarray}
Then $D^T:=(D, \mu \circ T, \alpha )$ is also a Hom-associative algebra. The map $T$ is called a
{\em Hom-pseudotwistor} and the two maps $\tilde{T}_1$,
$\tilde{T}_2$ are called the {\em companions} of $T$.
\end{proposition}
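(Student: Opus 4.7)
The plan is direct: verify the two axioms (multiplicativity and Hom-associativity) for the triple $(D, \mu \circ T, \alpha)$ by a short diagrammatic manipulation, using the four listed hypotheses plus the fact that $(D, \mu, \alpha)$ is already Hom-associative. Write $\mu^T := \mu \circ T$ for the candidate new multiplication.

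For multiplicativity, I would compute
\begin{eqnarray*}
\alpha \circ \mu^T &=& \alpha \circ \mu \circ T \;=\; \mu \circ (\alpha \otimes \alpha) \circ T \;=\; \mu \circ T \circ (\alpha \otimes \alpha) \;=\; \mu^T \circ (\alpha \otimes \alpha),
\end{eqnarray*}
where the second equality uses multiplicativity of $\mu$ in $D$ and the third uses hypothesis \eqref{hommultT}. This handles the first axiom with no obstacle.

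For Hom-associativity, the goal is $\mu^T \circ (\alpha \otimes \mu^T) = \mu^T \circ (\mu^T \otimes \alpha)$. I would expand the left-hand side as
\begin{eqnarray*}
\mu \circ T \circ (\alpha \otimes \mu) \circ (id_D \otimes T)
&=& \mu \circ (\alpha \otimes \mu) \circ \tilde{T}_1 \circ (T \otimes id_D) \circ (id_D \otimes T),
\end{eqnarray*}
by applying \eqref{hompstw1}, then rewrite $\mu \circ (\alpha \otimes \mu) = \mu \circ (\mu \otimes \alpha)$ via Hom-associativity of $\mu$. Symmetrically, the right-hand side becomes
\begin{eqnarray*}
\mu \circ T \circ (\mu \otimes \alpha) \circ (T \otimes id_D)
&=& \mu \circ (\mu \otimes \alpha) \circ \tilde{T}_2 \circ (id_D \otimes T) \circ (T \otimes id_D),
\end{eqnarray*}
by \eqref{hompstw2}. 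The two resulting expressions are now equated by the compatibility \eqref{hompstw3} between the companions, which says precisely $\tilde{T}_1 \circ (T \otimes id_D) \circ (id_D \otimes T) = \tilde{T}_2 \circ (id_D \otimes T) \circ (T \otimes id_D)$.

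There is essentially no hard step: each of the four hypotheses plays exactly the role one would design it to play, and the proof mirrors the associative-case argument from \cite{lpvo} with $\alpha$ inserted in the slots dictated by the Hom-associativity axiom. The only thing to keep an eye on is that \eqref{hompstw1} and \eqref{hompstw2} are stated with $\alpha$ in the ``passive'' tensorand (rather than $id_D$), which is exactly what is needed so that after pulling $T$ through $id_D\otimes \mu$ one can invoke Hom-associativity of $\mu$ instead of ordinary associativity.
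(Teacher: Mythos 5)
Your proposal is correct and follows essentially the same route as the paper: expand both sides of Hom-associativity via the obvious factorizations of $(\mu\circ T)\otimes\alpha$ and $\alpha\otimes(\mu\circ T)$, apply (\ref{hompstw1}) and (\ref{hompstw2}), use Hom-associativity of $\mu$ to exchange $\mu\circ(\alpha\otimes\mu)$ and $\mu\circ(\mu\otimes\alpha)$, and conclude with (\ref{hompstw3}); multiplicativity of $\alpha$ for $\mu\circ T$ follows from (\ref{hommultT}) exactly as you wrote. The only cosmetic difference is that the paper writes a single chain of equalities from $(\mu\circ T)\circ((\mu\circ T)\otimes\alpha)$ to $(\mu\circ T)\circ(\alpha\otimes(\mu\circ T))$, whereas you expand both sides and meet in the middle.
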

\begin{proof}
We record first the obvious relations
\begin{eqnarray}
&&(\mu \circ T)\ot \alpha =(\mu \ot \alpha )\circ (T\ot id_D), \label{ajut1} \\
&&\alpha \ot (\mu \circ T)=(\alpha \ot \mu )\circ (id_D\ot T). \label{ajut2}
\end{eqnarray}
The fact that $\alpha $ is multiplicative with respect to $\mu \circ T$ follows immediately from 
(\ref{hommultT}) and the fact that $\alpha $ is multiplicative with respect to $\mu $. Now we 
prove the Hom-associativity of $\mu \circ T$:
\begin{eqnarray*}
(\mu \circ T)\circ ((\mu \circ T)\otimes \alpha )
&\overset{(\ref{ajut1})}{=}&\mu \circ T\circ
(\mu \otimes \alpha )\circ (T\otimes id_D)\\
&\overset{(\ref{hompstw2})}{=}&\mu \circ (\mu \otimes \alpha )\circ \tilde{T}_2\circ 
(id_D\otimes T)\circ (T\otimes id_D)\\
&\overset{(\ref{hompstw3})}{=}&\mu \circ (\mu \otimes \alpha)\circ \tilde{T}_1\circ
(T\otimes id_D)\circ (id_D\otimes T)\\
&\overset{Hom-associativity\;of\;\alpha }{=}&\mu \circ (\alpha \otimes \mu )\circ \tilde{T}_1\circ 
(T\otimes id_D)\circ
(id_D\otimes T)\\
&\overset{(\ref{hompstw1})}{=}&\mu \circ T\circ (\alpha \otimes \mu )
\circ (id_D\otimes T)\\
&\overset{(\ref{ajut2})}{=}&(\mu \circ T)\circ (\alpha \otimes (\mu \circ T)),
\end{eqnarray*}
finishing the proof.
\end{proof}
\begin{definition} \label{Def-HomTwistor}
Let $(D, \mu , \alpha )$ be a Hom-associative algebra and $T:D\otimes
D\rightarrow D\otimes D$ a linear map, satisfying the following conditions:
\begin{eqnarray}
&&(\alpha \ot \alpha )\circ T=T\circ (\alpha \ot \alpha ), \label{multtwistor} \\
&&T\circ (\alpha \otimes \mu )=
(\alpha \otimes \mu )\circ T_{13}\circ T_{12},
\label{homtwistor1} \\
&&T\circ (\mu \otimes \alpha )=
(\mu \otimes \alpha )\circ T_{13}\circ T_{23},
\label{homtwistor2} \\
&&T_{12}\circ T_{23}=T_{23}\circ T_{12}.  \label{homtwistor3}
\end{eqnarray}
Such a map $T$ is called a {\em Hom-twistor}. Obviously,  a Hom-twistor $T$ is a Hom-pseudotwistor, 
with companions 
$\tilde{T}_1=\tilde{T}_2=T_{13}$, so we can consider the Hom-associative algebra
$D^T:=(D, \mu \circ T, \alpha )$. 
\end{definition}

\begin{example}\label{example23}
We consider the 2-dimensional Hom-associative algebra $(D,\mu,\alpha)$ defined with respect to a 
basis $\{e_1,e_2\}$ by
\begin{eqnarray*}
&&  \mu(e_1,e_1)=ae_1,\  \mu(e_1,e_2)= \mu(e_2,e_1)=\lambda_1 a e_1+  \lambda_2 a e_2,\   \\   &&\mu(e_2,e_2)=\frac{\lambda_1^2(1-2\lambda_2)a}{(1-\lambda_2)^2} e_1+ \frac{2 \lambda_1\lambda_2 a}{1-\lambda_2} e_2,
  \end{eqnarray*}
\begin{equation*}
�     \alpha (e_1 )=e_1, \   \alpha (e_2)=\lambda_1 e_1+\lambda_2 e_2,
\end{equation*}
where $a,\lambda_1,\lambda_2$ are  parameters in $k$, with $\lambda _2\neq 1$ and $a\neq 0$. It is 
easy to see that $D$ is an associative algebra if and only if $\lambda _2=0$.  

We provide an  example of  a Hom-twistor $T$ for $D$; it  is  defined with respect to the basis by
\begin{align*}
& T(e_1\otimes e_1)= e_1\otimes e_1,
& T(e_1\otimes e_2)=\frac{\lambda_1}{1-\lambda_2} e_1\otimes e_1,\\
& T(e_2\otimes e_1)= e_2\otimes e_1,
& T(e_2\otimes e_2)=\frac{\lambda_1}{1-\lambda_2} e_2\otimes e_1.
\end{align*}
By  Definition \ref{Def-HomTwistor}, we have the new Hom-associative algebra 
$D^{T}=(D, \mu _T=\mu \circ T, \alpha )$ whose multiplication is defined on the basis by 
\begin{align*}
&  \mu_T(e_1,e_1)=ae_1,\ &&   \mu_T(e_1,e_2)=  \frac{ \lambda_1a}{1-\lambda_2} e_1,\\&
\mu_T(e_2,e_1)=\lambda_1 a e_1+  \lambda_2 a e_2,\   &&\mu_T(e_2,e_2)=\frac{\lambda_1a}{1-\lambda_2} (\lambda_1e_1+ \lambda_2 e_2).
  \end{align*}
  Notice that the new multiplication is no longer commutative.

\end{example}
\begin{proposition}
Let $(D, \mu )$ be an associative algebra, $\alpha :D\rightarrow D$ an algebra endomorphism and 
$T:D\ot D\rightarrow D\ot D$ a pseudotwistor with companions $\tilde{T}_1$,  $\tilde{T}_2$. Consider 
the associative algebra $D^T=(D, \mu \circ T)$ and the Hom-associative algebra $D_{\alpha }=
(D, \alpha \circ \mu , \alpha )$. Assume that moreover we have $(\alpha \ot \alpha )\circ T=
T\circ (\alpha \ot \alpha )$.  
Then $T$ is a Hom-pseudotwistor for $D_{\alpha }$ with companions $\tilde{T}_1$,  $\tilde{T}_2$, 
the map $\alpha $ is an algebra endomorphism of the associative algebra $D^T$ and the 
Hom-associative algebras $(D_{\alpha })^T$ and $(D^T)_{\alpha }$ coincide. In particular, 
if $T$ is a twistor for $D$, then $T$ is a Hom-twistor for $D_{\alpha }$. 
\end{proposition}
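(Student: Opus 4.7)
The plan is straightforward: the entire statement reduces to pushing the compatibility relation $(\alpha \ot \alpha) \circ T = T \circ (\alpha \ot \alpha)$ through the pseudotwistor identities (\ref{pstw1})--(\ref{pstw3}), after rewriting $\mu_\alpha = \alpha \circ \mu$. The key bookkeeping identities I would isolate at the outset are
\begin{eqnarray*}
\alpha \ot \mu_\alpha = (\alpha \ot \alpha) \circ (id_D \ot \mu), \qquad
\mu_\alpha \ot \alpha = (\alpha \ot \alpha) \circ (\mu \ot id_D),
\end{eqnarray*}
which let one convert every occurrence of $\mu_\alpha$ into a $\mu$ at the cost of a leading $(\alpha \ot \alpha)$ that can be commuted across $T$ by hypothesis.

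First I would verify the Hom-pseudotwistor axiom (\ref{hompstw1}) for $T$ on $D_\alpha$ with companion $\tilde T_1$: starting from $T \circ (\alpha \ot \mu_\alpha) = T \circ (\alpha \ot \alpha) \circ (id_D \ot \mu)$, pull $(\alpha \ot \alpha)$ across $T$, apply the classical (\ref{pstw1}), and reabsorb the $\alpha$'s as $\alpha \ot \mu_\alpha$ on the right. The axiom (\ref{hompstw2}) follows in the same way from (\ref{pstw2}) using $\tilde T_2$; axiom (\ref{hompstw3}) is literally identical in both settings; and (\ref{hommultT}) is the assumed compatibility. So $T$ is a Hom-pseudotwistor for $D_\alpha$ with the same companions $\tilde T_1, \tilde T_2$, and \prref{hompseudotwistor} yields the Hom-associative algebra $(D_\alpha)^T$.

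Next, that $\alpha$ is an algebra endomorphism of $(D^T, \mu \circ T)$ is a one-line check: $\alpha \circ (\mu \circ T) = \mu \circ (\alpha \ot \alpha) \circ T = (\mu \circ T) \circ (\alpha \ot \alpha)$, using that $\alpha$ is an endomorphism of $(D,\mu)$ and then the compatibility with $T$. Consequently $(D^T)_\alpha = (D, \alpha \circ \mu \circ T, \alpha)$ is well defined, and comparing multiplications shows it equals $(D_\alpha)^T = (D, \mu_\alpha \circ T, \alpha) = (D, \alpha \circ \mu \circ T, \alpha)$ on the nose, with the same structure map $\alpha$.

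For the ``in particular'' clause, if $T$ is a twistor then $\tilde T_1 = \tilde T_2 = T_{13}$, and the argument above shows that $T$ is a Hom-pseudotwistor for $D_\alpha$ with the same choice of companions. Comparing \deref{Def-HomTwistor} with \prref{hompseudotwistor}, this is precisely the statement that $T$ is a Hom-twistor for $D_\alpha$, since (\ref{homtwistor1}), (\ref{homtwistor2}) are (\ref{hompstw1}), (\ref{hompstw2}) specialized to $\tilde T_i = T_{13}$ and (\ref{homtwistor3}) is inherited unchanged from (\ref{twistor3}). I do not anticipate a real obstacle: the proposition is essentially a formal consequence of the two inputs ``$\alpha$ commutes with $T$'' and ``$\alpha$ is an algebra endomorphism of $(D,\mu)$.''
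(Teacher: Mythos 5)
Your proposal is correct and follows essentially the same route as the paper: rewrite $\alpha\ot\mu_{\alpha}$ and $\mu_{\alpha}\ot\alpha$ as $(\alpha\ot\alpha)$ composed with $id_D\ot\mu$, respectively $\mu\ot id_D$, commute $(\alpha\ot\alpha)$ across $T$ by hypothesis, and invoke the classical pseudotwistor axioms, with (\ref{hompstw3}), the endomorphism property of $\alpha$ on $D^T$, and the equality of multiplications of $(D_{\alpha})^T$ and $(D^T)_{\alpha}$ all being immediate. Nothing to correct.
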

\begin{proof}
The only nontrivial things to prove are the relations (\ref{hompstw1}) and 
(\ref{hompstw2}) with $\mu $ there 
replaced by the multiplication of $D_{\alpha }$, that is $\alpha \circ \mu $. We compute: 
\begin{eqnarray*}
(\alpha \otimes (\alpha \circ \mu ))\circ \tilde{T}_1\circ (T\otimes id_D)&=&
(\alpha \otimes \alpha )\circ (id_D\ot \mu )\circ \tilde{T}_1\circ (T\otimes id_D)\\
&\overset{(\ref{pstw1})}{=}&(\alpha \ot \alpha )\circ T\circ (id_D\ot \mu )\\
&=&T\circ (\alpha \ot \alpha )\circ (id_D\ot \mu )\\
&=&T\circ (\alpha \ot (\alpha \circ \mu )),
\end{eqnarray*}
so (\ref{hompstw1}) holds; similarly one can prove (\ref{hompstw2}). 
\end{proof}

We introduce now the Hom-analogue of twisted tensor products of algebras. 
\begin{definition}
Let $(A, \mu _A, \alpha _A)$ and $(B, \mu _B, \alpha _B)$ 
be two Hom-associative algebras. A linear map $R:B\ot A 
\rightarrow A\ot B$ is called a {\em Hom-twisting map} between $A$ and $B$ if the 
following conditions are satisfied: 
\begin{eqnarray}
&&(\alpha _A\ot \alpha _B)\circ R=R\circ (\alpha _B\ot \alpha _A), \label{homtwmap0}\\
&&R\circ (\alpha _B\ot \mu _A)=(\mu _A\ot \alpha _B)\circ (id_A\ot R)\circ (R\ot id_A),
\label{homtwmap1}\\
&&R\circ (\mu _B\ot \alpha _A)=(\alpha _A\ot \mu _B)\circ (R\ot id_B)\circ (id_B\ot R).
\label{homtwmap2}
\end{eqnarray}
\end{definition}

If we use the standard Sweedler-type notation $R(b\ot a)=a_R\ot b_R=a_r\ot b_r$, for 
$a\in A$, $b\in B$, then
the above conditions may be rewritten as:
\begin{eqnarray}
&&\alpha _A(a_R)\ot \alpha _B(b_R)=\alpha _A(a)_R\ot \alpha _B(b)_R, \label{homsweed0} \\
&&(aa')_R\ot \alpha _B(b)_R=a_Ra'_r\ot \alpha _B((b_R)_r), \label{homsweed1} \\
&&\alpha _A(a)_R\ot (bb')_R=\alpha _A((a_R)_r)\ot b_rb'_R, \label{homsweed2}
\end{eqnarray}
for all $a, a'\in A$ and $b, b'\in B$. 
\begin{proposition} \label{homttp}
Let $(A, \mu _A, \alpha _A)$ and $(B, \mu _B, \alpha _B)$ 
be two Hom-associative algebras and $R:B\ot A 
\rightarrow A\ot B$ a Hom-twisting map. Define the linear map 
$T:(A\ot B)\ot (A\ot B)\rightarrow (A\ot B)\ot (A\ot B)$, $T((a\ot b)\ot (a'\ot b'))=
(a\ot b_R)\ot (a'_R\ot b')$. Then $T$ is a Hom-twistor for the Hom-associative algebra 
$(A\ot B, \mu _{A\ot B}, \alpha _A\ot \alpha _B)$, the tensor product of $A$ and $B$. 
The Hom-associative algebra $(A\ot B)^T$ is denoted by $A\ot _RB$ and is called 
the {\em Hom-twisted tensor product} of $A$ and $B$; its multiplication is defined by 
$(a\ot b)(a'\ot b')=aa'_R\ot b_Rb'$, and the structure map is $\alpha _A\ot \alpha _B$. 
\end{proposition}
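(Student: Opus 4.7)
The plan is a direct Sweedler-style verification of the four Hom-twistor axioms (\ref{multtwistor})--(\ref{homtwistor3}) for $T$ on the tensor Hom-associative algebra $(A\ot B,\mu_{A\ot B},\alpha_A\ot\alpha_B)$, using (\ref{homsweed0})--(\ref{homsweed2}) as the input. On a generic element I would apply both sides of each axiom and read off which of the three Hom-twisting identities is required to make them match.

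The multiplicativity axiom (\ref{multtwistor}) reduces immediately to (\ref{homsweed0}), because $T$ only touches the inner $B$-slot and the outer $A$-slot while $\alpha_A\ot\alpha_B$ acts diagonally. For (\ref{homtwistor1}), evaluating the left-hand side on $(a\ot b)\ot(a'\ot b')\ot(a''\ot b'')$ produces the factor $(a'a'')_R\ot \alpha_B(b)_R$ sitting in the middle two slots, whereas the right-hand side (first $T_{12}$, then $T_{13}$ where the fresh application of $R$ carries a second Sweedler label $r$, followed by $\alpha\ot\mu_{A\ot B}$) produces $a'_Ra''_r\ot\alpha_B((b_R)_r)$; the two sides are then identified by (\ref{homsweed1}). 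Axiom (\ref{homtwistor2}) is symmetric and is verified in the same way by means of (\ref{homsweed2}). Finally, axiom (\ref{homtwistor3}) requires no Hom-twisting hypothesis at all: $T_{12}$ touches only the pair (first $B$-slot, second $A$-slot) and $T_{23}$ touches only the disjoint pair (second $B$-slot, third $A$-slot), so the two compositions produce the same element up to relabeling of two independent $R$-instances.

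The only real obstacle is notational bookkeeping: one must introduce two independent Sweedler labels ($R$ and $r$) for the two applications of the twisting map that appear in $T_{13}\circ T_{12}$ and $T_{13}\circ T_{23}$, and keep track of which label sits in which tensor slot. Once that is set up, (\ref{homsweed0})--(\ref{homsweed2}) deliver the three nontrivial axioms by inspection and (\ref{homtwistor3}) is automatic, so $T$ is a Hom-twistor by Definition \ref{Def-HomTwistor}, and the resulting Hom-associative multiplication $\mu_{A\ot B}\circ T$ on $A\ot B$ unfolds, via the formula for $T$, to $(a\ot b)(a'\ot b')=aa'_R\ot b_Rb'$ with structure map $\alpha_A\ot\alpha_B$, as claimed.
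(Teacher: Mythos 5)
Your verification is correct and follows exactly the route of the paper's (much terser) proof: (\ref{multtwistor}) from (\ref{homsweed0}), (\ref{homtwistor1}) and (\ref{homtwistor2}) from (\ref{homsweed1}) and (\ref{homsweed2}) via the two-label Sweedler bookkeeping, and (\ref{homtwistor3}) holding trivially because the two $R$-instances act on disjoint tensor slots. The only quibble is cosmetic: in the element itself the middle slots carry $\alpha_B(b)_R\ot(a'a'')_R$ in that order, though your identification with (\ref{homsweed1}) is exactly right.
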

\begin{proof}
We need to prove that $T$ satisfies the conditions (\ref{multtwistor})-(\ref{homtwistor3}) for $A\ot B$. 
The condition (\ref{homtwistor3}) is trivially satisfied, (\ref{multtwistor}) follows immediately from 
(\ref{homsweed0}), while (\ref{homtwistor1}) and (\ref{homtwistor2}) follow after some easy 
computations by using (\ref{homsweed1}) and respectively (\ref{homsweed2}). 
\end{proof}
\begin{remark}
Let $(A, \mu _A, \alpha _A)$ and $(B, \mu _B, \alpha _B)$ 
be two Hom-associative algebras. Then obviously the linear map  
$R:B\ot A \rightarrow A\ot B$, $R(b\ot a)=a\ot b,$ is a Hom-twisting map and the 
Hom-twisted tensor product $A\ot _RB$ coincides with the ordinary tensor product 
$A\ot B$. 
\end{remark}
\begin{example} We assume that the characteristic of $k$ is zero 
and consider  the algebra $D$ defined in Example \ref{example23} with $\lambda _1\neq 0$ and $\lambda_2=0$. 
Recall that this $D$ is associative, but we regard it as a Hom-associative algebra with the same multiplication 
but with structure map as defined in Example \ref{example23}, that is $\alpha (e_1)=e_1$, $\alpha (e_2)=
\lambda _1e_1$. One can see that this Hom-associative algebra $D$ is a twisting, in the sense of 
Proposition \ref{yautwisting}, of the associative algebra $D$, via the map $\alpha $. 
We introduce two families of examples of Hom-twisting maps, denoted by $R_1$ and $R_2$, between 
this Hom-associative algebra and itself. They are defined with respect to the given basis by 
\begin{eqnarray*}
&& R_1(e_1\ot e_1)=0,\\
&& R_1(e_1\ot e_2)=a_1 e_1\ot e_1+a_2 e_1\ot e_2-(a_2+\frac{a_1}{\lambda_1})e_2\ot e_1,\\
&& R_1(e_2\ot e_1)=a_3 e_1\ot e_1-\frac{1}{2\lambda_1}(a_1+a_3-a_4+a_5+2a_2 \lambda_1) e_1\ot e_2\\
&&\;\;\;\;\;\;\;\;\;\;\;\;\;\;\;\;\;\;\;\;\;\;+\frac{1}{2\lambda_1}(a_1-a_3-a_4+a_5+2a_2 \lambda_1)e_2\ot e_1,\\
&& R_1(e_2\ot e_2)=\frac{\lambda_1}{2}(a_1+a_3-a_4-a_5) e_1\ot e_1+a_4 e_1\ot e_2+a_5 e_2\ot e_1\\
&&\;\;\;\;\;\;\;\;\;\;\;\;\;\;\;\;\;\;\;\;\;\;-\frac{1}{2\lambda_1}(a_1+a_3+a_4+a_5)e_2\ot e_2
\end{eqnarray*}
and
\begin{eqnarray*}
&&R_2(e_1\ot e_1)=e_1\ot e_1,\\
&&R_2(e_1\ot e_2)=a_1 e_1\ot e_1+a_2 e_1\ot e_2+(1-a_2-\frac{a_1}{\lambda_1})e_2\ot e_1,\\
&&R_2(e_2\ot e_1)=a_3 e_1\ot e_1-\frac{1}{2\lambda_1}(a_1+a_3-a_4+a_5+2a_2 \lambda_1-2\lambda_1) 
e_1\ot e_2\\
&& \;\;\;\;\;\;\;\;\;\;\;\;\;\;\;\;\;\;\;\;\;\;+\frac{1}{2\lambda_1}(a_1-a_3-a_4+a_5+2a_2 \lambda_1)e_2\ot e_1,\\
&& R_2(e_2\ot e_2)=\frac{\lambda_1}{2}(a_1+a_3-a_4-a_5) e_1\ot e_1+a_4 e_1\ot e_2+a_5 e_2\ot e_1\\ 
&&\;\;\;\;\;\;\;\;\;\;\;\;\;\;\;\;\;\;\;\;\;\; -\frac{1}{2\lambda_1}(a_1+a_3+a_4+a_5-2\lambda_1)e_2\ot e_2,
\end{eqnarray*}
where $a_1,\cdots , a_5$ are parameters in $k$. It is worth mentioning that in general $R_1$ and $R_2$ are 
{\em not} twisting maps for the associative algebra $D$. 
\end{example}
\begin{example}
We present now a family of examples of Hom-twisting maps between the Hom-associative algebra $D$ defined in 
Example \ref{example23}, for which we choose again $\lambda _1\neq 0$, $\lambda _2=0$, 
and the associative algebra 
$k^2$ (with a basis $\{f_1, f_2\}$ with multiplication  $f_i\cdot f_j=\delta_{ij} f_i$, for $i,j\in \{1,2\}$,  
where $\delta_{ij}$ is the Kronecker symbol), considered as a Hom-associative algebra with structure map equal to the 
identity. Namely, the twisting maps are defined with respect to the bases by the following formulae:
\begin{align*}
& R(f_1\ot e_1)=0,\\
& R(f_1\ot e_2)=a_1 e_1\ot f_1+a_2 e_1\ot f_2-\frac{a_1}{\lambda_1}e_2\ot f_1-\frac{a_2}{\lambda_1}e_2\ot f_2,\\
& R(f_2\ot e_1)=0,\\
& R(f_2\ot e_2)=a_1\lambda_1 e_1\ot f_1+a_2\lambda_1 e_1\ot f_2-a_1 e_2\ot f_1-a_2 e_2\ot f_2, 
\end{align*}
where $a_1, a_2$ are parameters in $k$. Note also that in general $R$ is \emph{not} a twisting maps 
between the associative algebras $D$ and $k^2$, therefore the obtained algebra is no longer associative.
\end{example}
\begin{proposition} \label{deformttp}
Let $(A, \mu _A)$ and $(B, \mu _B)$ be two  associative algebras, $\alpha _A:A\rightarrow A$ and 
$\alpha _B:B\rightarrow B$ algebra maps and $R:B\ot A\rightarrow A\ot B$ a twisting map 
satisfying the condition $(\alpha _A\ot \alpha _B)\circ R=R\circ (\alpha _B\ot \alpha _A)$. Then $R$ is a 
Hom-twisting map between the Hom-associative algebras $A_{\alpha _A}$ and $B_{\alpha _B}$ 
and the Hom-associative algebras $A_{\alpha _A}\ot _RB_{\alpha _B}$ and 
$(A\ot _RB)_{\alpha _A\ot \alpha _B}$ coincide.
\end{proposition}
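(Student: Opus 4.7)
The plan is to check two things: first, that $R$ satisfies the three Hom-twisting axioms (\ref{homtwmap0})--(\ref{homtwmap2}) between $A_{\alpha _A}$ and $B_{\alpha _B}$; and second, that the two resulting Hom-associative structures on $A\ot B$ have the same multiplication and structure map. Both checks are essentially bookkeeping, with the compatibility hypothesis $(\alpha _A\ot \alpha _B)\circ R = R\circ (\alpha _B\ot \alpha _A)$ doing all the real work.

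For the first claim, axiom (\ref{homtwmap0}) is literally the compatibility hypothesis, so there is nothing to prove there. For (\ref{homtwmap1}), I would expand $\mu _{A_{\alpha _A}}=\alpha _A\circ \mu _A$ on the left-hand side, use the compatibility hypothesis to pull $\alpha _A$ past $R$, and then apply the classical twisting axiom (\ref{twmap1}) for $R$ as a twisting map between the associative algebras $A$ and $B$. The resulting expression is $((\alpha _A\circ \mu _A)\ot \alpha _B)\circ (id_A\ot R)\circ (R\ot id_A)$, which is precisely the right-hand side of (\ref{homtwmap1}) for the new multiplication $\mu _{A_{\alpha _A}}$. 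The argument for (\ref{homtwmap2}) is the mirror image, invoking (\ref{twmap2}) and using the compatibility to push $\alpha _B$ across $R$ from the other side.

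For the second claim, I would simply unpack both multiplications on $A\ot B$. By \prref{homttp} applied to $A_{\alpha _A}$ and $B_{\alpha _B}$, the product in $A_{\alpha _A}\ot _R B_{\alpha _B}$ sends $(a\ot b)\ot (a'\ot b')$ to $\alpha _A(aa'_R)\ot \alpha _B(b_Rb')$ with structure map $\alpha _A\ot \alpha _B$. The Yau twisting $(A\ot _R B)_{\alpha _A\ot \alpha _B}$ has the same structure map, and its multiplication $(\alpha _A\ot \alpha _B)\circ \mu _R$ outputs the same element. Thus the two Hom-associative algebras coincide on the nose. No genuine obstacle is expected: the only conceptual point is the recognition that the compatibility hypothesis is exactly the ingredient needed to transport the classical twisting axioms (\ref{twmap1})--(\ref{twmap2}) to their Hom-analogues after pre-composing the multiplications with $\alpha _A$ and $\alpha _B$.
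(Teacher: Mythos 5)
Your proposal is correct and follows essentially the same route as the paper: axiom (\ref{homtwmap0}) is the hypothesis, (\ref{homtwmap1}) and (\ref{homtwmap2}) are obtained by combining the classical axioms (\ref{twmap1})--(\ref{twmap2}) with the compatibility relation (your chain for (\ref{homtwmap1}) is the paper's computation read in reverse), and the coincidence of the two structures is seen by unpacking both multiplications to $\alpha _A(aa'_R)\ot \alpha _B(b_Rb')$. The only point the paper makes explicit that you leave implicit is that the compatibility relation also guarantees $\alpha _A\ot \alpha _B$ is an algebra endomorphism of $A\ot _RB$, which is what licenses forming $(A\ot _RB)_{\alpha _A\ot \alpha _B}$ in the first place.
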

\begin{proof}
Note first that $\alpha _A\ot \alpha _B$ is an algebra endomorphism of $A\ot _RB$ because of the relation 
$(\alpha _A\ot \alpha _B)\circ R=R\circ (\alpha _B\ot \alpha _A)$. We need to prove 
(\ref{homtwmap1}) and (\ref{homtwmap2}) with those $\mu _A$ and $\mu _B$ replaced by 
$\alpha _A\circ \mu _A$ and respectively $\alpha _B\circ \mu _B$. We prove only 
(\ref{homtwmap1}), while (\ref{homtwmap2}) is similar and left to the reader: 
\begin{eqnarray*}
((\alpha _A\circ \mu _A)\ot \alpha _B)\circ (id_A\ot R)\circ (R\ot id_A)&=&
(\alpha _A\ot \alpha _B)\circ (\mu _A\ot id_B)\\
&&\circ (id_A\ot R)\circ (R\ot id_A)\\
&\overset{(\ref{twmap1})}{=}&(\alpha _A\ot \alpha _B)\circ R\circ (id_B\ot \mu _A)\\
&=&R\circ (\alpha _B\ot \alpha _A)\circ (id_B\ot \mu _A)\\
&=&R\circ (\alpha _B\ot (\alpha _A\circ \mu _A)), \;\;\;q.e.d.
\end{eqnarray*} 
The fact that the multiplications of $A_{\alpha _A}\ot _RB_{\alpha _B}$ and 
$(A\ot _RB)_{\alpha _A\ot \alpha _B}$ coincide is an immediate consequence of the relation 
$(\alpha _A\ot \alpha _B)\circ R=R\circ (\alpha _B\ot \alpha _A)$.
\end{proof}

 Let $(A, \mu _A)$ be a 
(not necessarily associative) algebra over $k$, let $q\in k$ 
be a nonzero fixed element and $\sigma :A\rightarrow A$ an involutive (i.e. $\sigma ^2=id_A$) 
algebra automorphism. We denote by $C(k, q)$ the $2$-dimensional 
associative algebra $k[v]/(v^2=q)$. Define the linear map 
\begin{eqnarray}
&&R:C(k, q)\otimes A\rightarrow A\otimes C(k, q), \;\;\;
R(1\otimes a)=a\otimes 1, \;\;R(v\otimes a)=\sigma (a)\otimes v, 
\label{RClifford}
\end{eqnarray}
for all $a\in A$. The {\em Clifford process}, as introduced in \cite{am2}, \cite{wene}, 
associates to the pair  
$(A, \sigma )$ a  
(not necessarily associative) algebra structure on $A\otimes C(k, q)$, 
with multiplication 
\begin{eqnarray}
&&(a\otimes 1+b\otimes v)(c\otimes 1+d\otimes v)=(ac+qb\sigma (d))\otimes 1
+(ad+b\sigma (c))\otimes v,  
\label{multClifford}
\end{eqnarray}
for all $a, b, c, d\in A$. This algebra structure is denoted by 
$\overline{A}$. As noted in \cite{am2}, 
if $A$ is associative then so is $\overline{A}$,  
and in this case $R$ is a twisting map and $\overline{A}$ is the 
twisted tensor product of associative algebras 
$\overline{A}=A\otimes _RC(k, q)$. If $A$ is a quasialgebra, i.e. 
$A$ is a left $H$-module algebra over a quasi-bialgebra $H$ and 
moreover $\sigma $ is $H$-linear, then $\overline{A}$ is also a 
quasialgebra, cf. \cite{ap}.

Assume now that $(A, \mu _A, \alpha _A)$ is Hom-associative and we have 
$\alpha _A\circ \sigma =\sigma \circ \alpha _A$. We can regard $B=C(k, q)$ as a 
Hom-associative algebra with $\alpha _B=id$. 
\begin{proposition}
The map $R$ defined by (\ref{RClifford}) is a Hom-twisting map and the Hom-twisted 
tensor product $A\ot _RC(k, q)$ and $\overline{A}$ are isomorphic as algebras. 
Consequently, $\overline{A}$ is a Hom-associative algebra. 
\end{proposition}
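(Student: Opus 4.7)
The plan is to verify in turn the three defining conditions for a Hom-twisting map by evaluating on the basis $\{1,v\}$ of $C(k,q)$, then compute the multiplication of $A\ot_R C(k,q)$ explicitly and compare with formula (\ref{multClifford}). Throughout, we use the standing hypotheses that $\sigma^2=\mathit{id}_A$, that $\sigma$ is an algebra automorphism of $A$, that $\alpha_A\circ\sigma=\sigma\circ\alpha_A$, and that $\alpha_B=\mathit{id}$.

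First I would check (\ref{homtwmap0}). With $\alpha_B=\mathit{id}$, the condition reduces to $(\alpha_A\ot\mathit{id})\circ R=R\circ(\mathit{id}\ot\alpha_A)$. On $1\ot a$ both sides yield $\alpha_A(a)\ot 1$; on $v\ot a$ the left side gives $\alpha_A(\sigma(a))\ot v$ while the right gives $\sigma(\alpha_A(a))\ot v$, and these agree by the commutation of $\alpha_A$ with $\sigma$.

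Next I would verify (\ref{homtwmap1}) applied to $b\ot a\ot a'$. For $b=1$ both sides produce $aa'\ot 1$. For $b=v$, the left side is $R(v\ot aa')=\sigma(aa')\ot v=\sigma(a)\sigma(a')\ot v$, using that $\sigma$ is an algebra map; the right side successively produces $\sigma(a)\ot v\ot a'$, then $\sigma(a)\ot\sigma(a')\ot v$, then $\sigma(a)\sigma(a')\ot v$. For (\ref{homtwmap2}) applied to $b\ot b'\ot a$ I would run through the four cases $(1,1),(1,v),(v,1),(v,v)$. The first three are immediate; the only nontrivial one is $(v,v)$, where $\mu_B(v\ot v)=q$ gives left side $q\alpha_A(a)\ot 1$, while the right side computes as $v\ot\sigma(a)\ot v\mapsto\sigma^2(a)\ot v\ot v\mapsto\alpha_A(a)\ot q=q\alpha_A(a)\ot 1$. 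This is the one spot where $\sigma^2=\mathit{id}_A$ is used, and it is the only step with any content; I expect it to be the only nominal obstacle, and it is not really one.

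Having shown that $R$ is a Hom-twisting map, \prref{homttp} produces the Hom-associative algebra $A\ot_R C(k,q)$ with multiplication $(a\ot b)(a'\ot b')=aa'_R\ot b_R b'$ and structure map $\alpha_A\ot\mathit{id}$. Finally I would expand $(a\ot 1+b\ot v)(c\ot 1+d\ot v)$ using this formula and the values $R(1\ot x)=x\ot 1$, $R(v\ot x)=\sigma(x)\ot v$: the four summands yield $ac\ot 1$, $ad\ot v$, $b\sigma(c)\ot v$ and $b\sigma(d)\ot v\cdot v=qb\sigma(d)\ot 1$ respectively, reproducing (\ref{multClifford}) on the nose. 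Thus the identity map is an algebra isomorphism $A\ot_R C(k,q)\simeq\overline{A}$, and since the former is Hom-associative so is $\overline{A}$ (with the same structure map $\alpha_A\ot\mathit{id}$), completing the proof.
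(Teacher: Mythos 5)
Your proposal is correct and follows essentially the same route as the paper: verify the three Hom-twisting-map axioms on the basis $\{1,v\}$ of $C(k,q)$ (with the only substantive case being (\ref{homtwmap2}) on $v\ot v\ot a$, where $\sigma ^2=id_A$ enters), then invoke Proposition \ref{homttp} and match the resulting multiplication with (\ref{multClifford}). You simply spell out a few verifications (notably (\ref{homtwmap1}) and the final expansion of the product) that the paper leaves to the reader, which is fine.
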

\begin{proof}
We begin with (\ref{homtwmap0}), which is enough to be checked on elements of 
the type $1\ot a$ and $v\ot a$, with $a\in A$:
\begin{eqnarray*}
((\alpha _A\ot id)\circ R)(1\ot a)&=&(\alpha _A\ot id)(a\ot 1)=
\alpha _A(a)\ot 1\\
&=&R(1\ot \alpha _A(a))=
(R\circ (id\ot \alpha _A))(1\ot a), 
\end{eqnarray*}
\begin{eqnarray*}
((\alpha _A\ot id)\circ R)(v\ot a)&=&(\alpha _A\ot id)(\sigma  (a)\ot v)=
\alpha _A(\sigma (a))\ot v=\sigma (\alpha _A(a))\ot v\\
&=&R(v\ot \alpha _A(a))=(R\circ (id\ot \alpha _A))(v\ot a).
\end{eqnarray*}
Similarly one has to check (\ref{homtwmap1}) on elements of the type $1\ot a\ot a'$ and 
$v\ot a\ot a'$ 
and (\ref{homtwmap2}) on elements of the type $1\ot 1\ot a$, $1\ot v\ot a$, $v\ot 1\ot a$ and 
$v\ot v\ot a$, with $a, a'\in A$.  Let us only check (\ref{homtwmap2}) on $v\ot v\ot a$:
\begin{eqnarray*}
&&(R\circ (\mu \ot \alpha _A))(v\ot v\ot a)=R(v^2\ot \alpha _A(a))=
R(q1\ot \alpha _A(a))=q\alpha _A(a)\ot 1,
\end{eqnarray*}
\begin{eqnarray*}
((\alpha _A\ot \mu )\circ (R\ot id)\circ (id\ot R))(v\ot v\ot a)&=&
((\alpha _A\ot \mu )\circ (R\ot id))(v\ot \sigma (a)\ot v)\\
&=&(\alpha _A\ot \mu )(\sigma ^2(a)\ot v\ot v)\\ 
&=&\alpha _A(a)\ot v^2=\alpha _A(a)\ot q1, \;\;\;q.e.d.
\end{eqnarray*}
The fact that $\overline{A}$ is exactly the Hom-twisted tensor product $A\ot _RC(k, q)$ is obvious. 
\end{proof}
\begin{remark}
In particular, if $(A, \mu _A, \alpha _A)$ is a Hom-associative algebra such that $\alpha _A^2=id_A$, 
we can perform the Clifford process with $\sigma :=\alpha _A$ to obtain the 
new Hom-associative algebra $\overline{A}$.
\end{remark}

We prove that, under certain circumstances, Hom-twisted tensor products can be iterated, generalizing thus 
the corresponding result obtained for associative algebras in \cite{jlpvo}.
\begin{theorem} \label{homiterated}
Let $(A, \mu _A, \alpha _A)$, $(B, \mu _B, \alpha _B)$ and $(C, \mu _C, \alpha _C)$ be three  
Hom-associative algebras and $R_1:B\ot A\rightarrow A\ot B$, $R_2:C\ot B\rightarrow B\ot C$, 
$R_3:C\ot A\rightarrow A\ot C$ three Hom-twisting maps, satisfying the braid condition 
\begin{eqnarray}
&&(id_A\ot R_2)\circ (R_3\ot id_B)\circ (id_C\ot R_1)=
(R_1\ot id_C)\circ (id_B\ot R_3)\circ (R_2\ot id_A). \label{hombraid}
\end{eqnarray}
Define the maps
\begin{eqnarray*}
&&P_1:C\ot (A\ot _{R_1}B)\rightarrow (A\ot _{R_1}B)\ot C, \;\;\;P_1=(id_A\ot R_2)\circ (R_3\ot id_B), \\
&&P_2:(B\ot _{R_2}C)\ot A\rightarrow A\ot (B\ot _{R_2}C), \;\;\;P_2=(R_1\ot id_C)\circ (id_B\ot R_3).
\end{eqnarray*} 
Then $P_1$ is a Hom-twisting map between $A\ot _{R_1}B$ and $C$, $P_2$ is a 
Hom-twisting map between $A$ and $B\ot _{R_2}C$, and the Hom-associative algebras 
$(A\ot _{R_1}B)\ot _{P_1}C$ and $A\ot _{P_2}(B\ot _{R_2}C)$ coincide; this Hom-associative 
algebra will be denoted by $A\ot _{R_1}B\ot _{R_2}C$ and will be called the {\em iterated Hom-twisted tensor 
product} of $A$, $B$, $C$. 
\end{theorem}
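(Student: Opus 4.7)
The plan is to verify that $P_1$ satisfies the three defining conditions of a Hom-twisting map between $A\ot_{R_1}B$ and $C$; the analogous verification for $P_2$ will proceed symmetrically, and the coincidence of the two resulting multiplications will then reduce to a direct Sweedler-level computation. The multiplicativity condition (\ref{homtwmap0}) for $P_1$ is immediate: pulling $\alpha_C\ot\alpha_A\ot\alpha_B$ first through $R_3\ot id_B$ using (\ref{homtwmap0}) for $R_3$, and then through $id_A\ot R_2$ using (\ref{homtwmap0}) for $R_2$, yields $(\alpha_A\ot\alpha_B\ot\alpha_C)\circ P_1$. Condition (\ref{homtwmap2}) for $P_1$, which involves $\mu_C$ and $\alpha_A\ot\alpha_B$ but not the multiplication in $A\ot_{R_1}B$, should follow by applying (\ref{homtwmap2}) for $R_3$ to absorb $\mu_C$ past the $A$-slot, and then (\ref{homtwmap2}) for $R_2$ to do the same past the $B$-slot; in this step neither $R_1$ nor the braid condition appears, and the two sides should match after a relabeling of Sweedler indices.

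The main obstacle will be condition (\ref{homtwmap1}) for $P_1$, namely
\begin{equation*}
P_1\circ(\alpha_C\ot\mu_{A\ot_{R_1}B})=(\mu_{A\ot_{R_1}B}\ot\alpha_C)\circ(id_{A\ot B}\ot P_1)\circ(P_1\ot id_{A\ot B}),
\end{equation*}
which mixes $R_1$ (coming from $\mu_{A\ot_{R_1}B}$) with $R_2$ and $R_3$. My approach is to expand both sides via the Sweedler formulas (\ref{homsweed1})--(\ref{homsweed2}) applied to each of $R_1, R_2, R_3$ and to compare the three tensor factors in $A\ot B\ot C$. I expect that, after using (\ref{homtwmap1}) for $R_3$ and for $R_2$ on the left, the equality reduces to the statement that the composites
\begin{equation*}
(id_A\ot R_2)\circ(R_3\ot id_B)\circ(id_C\ot R_1) \quad \text{and} \quad (R_1\ot id_C)\circ(id_B\ot R_3)\circ(R_2\ot id_A)
\end{equation*}
agree on $C\ot B\ot A$, which is precisely the braid hypothesis (\ref{hombraid}); the relevant pointwise identity it yields is $(a_{R_1})_{R_3}\ot(b_{R_1})_{R_2}\ot(c_{R_3})_{R_2}=(a_{R_3})_{R_1}\ot(b_{R_2})_{R_1}\ot(c_{R_2})_{R_3}$, which is exactly what is needed to reconcile the orderings of the Sweedler labels on the two sides. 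The symmetric verification for $P_2$ will then be immediate: condition (\ref{homtwmap1}) for $P_2$ involves only $\mu_A$ and requires no braid, while condition (\ref{homtwmap2}) for $P_2$ involves $\mu_{B\ot_{R_2}C}$ and again invokes (\ref{hombraid}).

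Finally, once $P_1$ and $P_2$ are known to be Hom-twisting maps, the coincidence of $(A\ot_{R_1}B)\ot_{P_1}C$ and $A\ot_{P_2}(B\ot_{R_2}C)$ follows by a short direct computation. Applying the respective product formulas to $((a\ot b)\ot c)\cdot((a'\ot b')\ot c')$ and to $(a\ot(b\ot c))\cdot(a'\ot(b'\ot c'))$, in both cases one first applies $R_3$ to $(c,a')$, then $R_2$ to $(c_{R_3},b')$, and finally $R_1$ to $(b,a'_{R_3})$, obtaining the common value $a(a'_{R_3})_{R_1}\ot b_{R_1}b'_{R_2}\ot (c_{R_3})_{R_2}c'$ in $A\ot B\ot C$ under the canonical identification $(A\ot B)\ot C=A\ot(B\ot C)$; no further appeal to the braid condition is needed at this last step.
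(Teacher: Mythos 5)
Your proposal is correct and follows essentially the same route as the paper's proof: you verify the three Hom-twisting axioms for $P_1$ (and symmetrically $P_2$) in Sweedler notation, invoke the braid condition exactly and only in the mixed condition involving $\mu_{A\ot_{R_1}B}$ (respectively $\mu_{B\ot_{R_2}C}$) via the pointwise identity $(a_{R_1})_{R_3}\ot(b_{R_1})_{R_2}\ot(c_{R_3})_{R_2}=(a_{R_3})_{R_1}\ot(b_{R_2})_{R_1}\ot(c_{R_2})_{R_3}$, and conclude by observing that both iterated products give the common multiplication $a(a'_{R_3})_{R_1}\ot b_{R_1}b'_{R_2}\ot(c_{R_3})_{R_2}c'$. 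The remaining work is only the routine expansion of the displayed identities, which proceeds exactly as you indicate.
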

\begin{proof}
We prove that $P_1$ is a Hom-twisting map (the proof for $P_2$ is similar and left to the reader). 
We will use the Sweedler-type notation introduced before. With this notation, $P_1$ is defined by 
$P_1(c\ot a\ot b)=a_{R_3}\ot b_{R_2}\ot (c_{R_3})_{R_2}$, and (\ref{hombraid}) may be 
written as 
\begin{eqnarray}
&&(a_{R_1})_{R_3}\ot (b_{R_1})_{R_2}\ot (c_{R_3})_{R_2}=
(a_{R_3})_{R_1}\ot (b_{R_2})_{R_1}\ot (c_{R_2})_{R_3}, \label{sweedbraid}
\end{eqnarray}
for all $a\in A$, $b\in B$, $c\in C$. We prove (\ref{homtwmap0}) for $P_1$: 
\begin{eqnarray*}
((\alpha _A\ot \alpha _B\ot \alpha _C)\circ P_1)(c\ot a\ot b)&=&
\alpha _A(a_{R_3})\ot \alpha _B(b_{R_2})\ot \alpha _C((c_{R_3})_{R_2})\\
&\overset{(\ref{homsweed0})}{=}&\alpha _A(a_{R_3})\ot \alpha _B(b)_{R_2}\ot 
\alpha _C(c_{R_3})_{R_2}\\
&\overset{(\ref{homsweed0})}{=}&\alpha _A(a)_{R_3}\ot \alpha _B(b)_{R_2}\ot 
(\alpha _C(c)_{R_3})_{R_2}\\
&=&(P_1\circ (\alpha _C\ot \alpha _A\ot \alpha _B))(c\ot a\ot b), \;\;\;q.e.d.
\end{eqnarray*}
Now we prove (\ref{homtwmap1}) for $P_1$: \\[2mm]
${\;\;\;\;}$
$(P_1\circ (\alpha _C\ot \mu _{A\ot _{R_1}B}))(c\ot a\ot b\ot a'\ot b')$
\begin{eqnarray*}
&=&P_1(\alpha _C(c)\ot aa'_{R_1}\ot b_{R_1}b')\\[2mm]
&=&(aa'_{R_1})_{R_3}\ot (b_{R_1}b')_{R_2}\ot (\alpha _C(c)_{R_3})_{R_2}\\
&\overset{(\ref{homsweed1})}{=}&a_{R_3}(a'_{R_1})_{r_3}\ot (b_{R_1}b')_{R_2} 
\ot \alpha _C((c_{R_3})_{r_3})_{R_2}\\
&\overset{(\ref{homsweed1})}{=}&a_{R_3}(a'_{R_1})_{r_3}\ot (b_{R_1})_{R_2}b'_{r_2}
\ot \alpha _C((((c_{R_3})_{r_3})_{R_2})_{r_2}), 
\end{eqnarray*}
${\;\;\;\;}$
$((\mu _{A\ot _{R_1}B}\ot \alpha _C)\circ (id_A\ot id_B\ot P_1)\circ 
(P_1\ot id_A\ot id_B))(c\ot a\ot b\ot a'\ot b')$
\begin{eqnarray*}
&=&((\mu _{A\ot _{R_1}B}\ot \alpha _C)\circ (id_A\ot id_B\ot P_1))(a_{R_3}\ot 
b_{R_2}\ot (c_{R_3})_{R_2}\ot a'\ot b')\\
&=&(\mu _{A\ot _{R_1}B}\ot \alpha _C)(a_{R_3}\ot b_{R_2}\ot a'_{r_3}\ot b'_{r_2} 
\ot (((c_{R_3})_{R_2})_{r_3})_{r_2})\\
&=&a_{R_3}(a'_{r_3})_{R_1}\ot (b_{R_2})_{R_1}b'_{r_2}
\ot \alpha _C((((c_{R_3})_{R_2})_{r_3})_{r_2})\\
&\overset{(\ref{sweedbraid})}{=}&a_{R_3}(a'_{R_1})_{r_3}\ot (b_{R_1})_{R_2}b'_{r_2}
\ot \alpha _C((((c_{R_3})_{r_3})_{R_2})_{r_2}), \;\;\;q.e.d.
\end{eqnarray*}
Finally, we prove (\ref{homtwmap2}) for $P_1$: 
\begin{eqnarray*}
(P_1\circ (\mu _C\ot \alpha _A\ot \alpha _B))(c\ot c'\ot a\ot b)&=&
P_1(cc'\ot \alpha _A(a)\ot \alpha _B(b))\\
&=&\alpha _A(a)_{R_3}\ot \alpha _B(b)_{R_2}\ot ((cc')_{R_3})_{R_2}\\
&\overset{(\ref{homsweed2})}{=}&\alpha _A((a_{R_3})_{r_3})
\ot \alpha _B(b)_{R_2}\ot (c_{r_3}c'_{R_3})_{R_2}\\
&\overset{(\ref{homsweed2})}{=}&\alpha _A((a_{R_3})_{r_3})
\ot \alpha _B((b_{R_2})_{r_2})\ot (c_{r_3})_{r_2}(c'_{R_3})_{R_2}, 
\end{eqnarray*} 
${\;\;\;\;}$
$((\alpha _A\ot \alpha _B\ot \mu _C)\circ (P_1\ot id_C)\circ (id_C\ot P_1))
(c\ot c'\ot a\ot b)$
\begin{eqnarray*}
&=&((\alpha _A\ot \alpha _B\ot \mu _C)\circ (P_1\ot id_C))(c\ot a_{R_3}\ot b_{R_2}
\ot (c'_{R_3})_{R_2})\\
&=&(\alpha _A\ot \alpha _B\ot \mu _C)((a_{R_3})_{r_3}\ot (b_{R_2})_{r_2}\ot 
(c_{r_3})_{r_2}\ot (c'_{R_3})_{R_2})\\
&=&\alpha _A((a_{R_3})_{r_3})\ot \alpha _B((b_{R_2})_{r_2})\ot 
(c_{r_3})_{r_2}(c'_{R_3})_{R_2}), \;\;\;q.e.d.
\end{eqnarray*}
The fact that $(A\ot _{R_1}B)\ot _{P_1}C$ and $A\ot _{P_2}(B\ot _{R_2}C)$ coincide is 
obvious, because the multiplications in these algebras are both defined by 
$(a\ot b\ot c)(a'\ot b'\ot c')=a(a'_{R_3})_{R_1}\ot b_{R_1}b'_{R_2}\ot 
(c_{R_3})_{R_2}c'$. 
\end{proof}
\section{Hom-smash products}
\setcounter{equation}{0}
${\;\;\;}$We introduce now a Hom-analogue of  the smash product.
\begin{theorem}\label{smashThm}
Let $(H, \mu _H, \Delta _H, \alpha _H)$ be a Hom-bialgebra, 
$(A, \mu _A, \alpha _A)$ a left $H$-module Hom-algebra,  
with action denoted by $H\ot A\rightarrow A$, $h\ot a\mapsto h\cdot a$, and 
assume that the structure maps $\alpha _H$ and $\alpha _A$ are both bijective. 
Define the linear map 
\begin{eqnarray}
&&R:H\ot A\rightarrow A\ot H, \;\;\;R(h\ot a)=\alpha _H^{-2}(h_1)\cdot \alpha _A^{-1}(a)\ot 
\alpha _H^{-1}(h_2). \label{twmapleft}
\end{eqnarray}
Then $R$ is a Hom-twisting map between $A$ and $H$. Consequently, we can consider the 
Hom-associative algebra $A\ot _RH$, which is denoted by $A\# H$ (we denote $a\ot h:=a\# h$, 
for $a\in A$, $h\in H$) and called the {\em Hom-smash product} of $A$ and $H$. 
The structure map of $A\# H$ is $\alpha _A\ot \alpha _H$ and its multiplication is 
\begin{eqnarray*}
&&(a\# h)(a'\# h')=a(\alpha _H^{-2}(h_1)\cdot \alpha _A^{-1}(a'))\# \alpha _H^{-1}(h_2)h'.
\end{eqnarray*}
\end{theorem}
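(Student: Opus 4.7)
The plan is to verify directly the three Hom-twisting map axioms \eqref{homtwmap0}--\eqref{homtwmap2} for the map $R$ given by \eqref{twmapleft}; once that is done, the Hom-associative algebra structure on $A \otimes_R H$ together with the claimed multiplication is handed to us by \prref{homttp}, since the latter gives $(a \otimes h)(a' \otimes h') = a a'_R \otimes h_R h'$ and plugging in our $R$ produces exactly the stated formula. Throughout, I would use Sweedler notation and exploit two elementary consequences of bijectivity: that $\alpha_A$ being an algebra morphism forces $\alpha_A^{-1}$ to be one as well, and that the comultiplicativity $\Delta_H \circ \alpha_H = (\alpha_H \otimes \alpha_H) \circ \Delta_H$ entails $\Delta_H \circ \alpha_H^{-n} = (\alpha_H^{-n} \otimes \alpha_H^{-n}) \circ \Delta_H$ for every $n \in \mathbb{Z}$. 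Likewise, \eqref{hommod1} gives $\alpha_A^{-1}(h \cdot a) = \alpha_H^{-1}(h) \cdot \alpha_A^{-1}(a)$, which will be used to move $\alpha_A^{-1}$ across the action.

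For \eqref{homtwmap0}, applying $R$ after $\alpha_H \otimes \alpha_A$ and applying $\alpha_A \otimes \alpha_H$ after $R$ both reduce to $\alpha_H^{-1}(h_1) \cdot a \otimes h_2$, the first by comultiplicativity of $\alpha_H$ and the second by \eqref{hommod1}. For \eqref{homtwmap1} applied to $h \otimes a \otimes a'$, the left-hand side simplifies via comultiplicativity to $\alpha_H^{-1}(h_1) \cdot (\alpha_A^{-1}(a)\alpha_A^{-1}(a')) \otimes h_2$; invoking the module Hom-algebra axiom \eqref{modalgcompat} with its $h$ replaced by $\alpha_H^{-3}(h_1)$ rewrites it as
\[
(\alpha_H^{-3}((h_1)_1)\cdot \alpha_A^{-1}(a))(\alpha_H^{-3}((h_1)_2)\cdot \alpha_A^{-1}(a')) \otimes h_2.
\]
Meanwhile, unpacking the right-hand side of \eqref{homtwmap1} step by step yields
\[
(\alpha_H^{-2}(h_1)\cdot \alpha_A^{-1}(a))(\alpha_H^{-3}((h_2)_1)\cdot \alpha_A^{-1}(a')) \otimes \alpha_H^{-1}((h_2)_2),
\]
and the two expressions are identified by applying $\alpha_H^{-3}\otimes \alpha_H^{-3}\otimes \alpha_H^{-1}$ to the Hom-coassociativity identity $(h_1)_1\otimes (h_1)_2 \otimes \alpha_H(h_2)=\alpha_H(h_1)\otimes (h_2)_1\otimes (h_2)_2$.

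For \eqref{homtwmap2} applied to $h \otimes h' \otimes a$, the left-hand side expands, using multiplicativity of $\Delta_H$ together with the fact that $\alpha_H^{-1}$ is an algebra morphism, to $(\alpha_H^{-2}(h_1)\alpha_H^{-2}(h'_1)) \cdot a \otimes \alpha_H^{-1}(h_2)\alpha_H^{-1}(h'_2)$. For the right-hand side, I would first push $\alpha_A^{-1}$ through the action to turn the nested expression $\alpha_A^{-1}(\alpha_H^{-2}(h'_1) \cdot \alpha_A^{-1}(a))$ into $\alpha_H^{-3}(h'_1) \cdot \alpha_A^{-2}(a)$, then use \eqref{hommod1} to absorb the outer $\alpha_A$ from $\mu_A \otimes \alpha_H$, and finally apply \eqref{hommod2}, in the equivalent form $g \cdot (g' \cdot b) = (\alpha_H^{-1}(g)g') \cdot \alpha_A(b)$, to collapse the iterated action into an action by the product $\alpha_H^{-2}(h_1)\alpha_H^{-2}(h'_1)$, matching the LHS exactly.

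The only real obstacle is bookkeeping of the powers of $\alpha_H$ and $\alpha_A$: the specific exponents $-2$ and $-1$ appearing in the definition of $R$ are rigid, in the sense that they are precisely what make the three identifications above come out even, with Hom-coassociativity accounting for the mismatch between $(h_1)_1, (h_1)_2, h_2$ and $h_1, (h_2)_1, (h_2)_2$ in the verification of \eqref{homtwmap1}, and \eqref{hommod2} absorbing the corresponding mismatch in \eqref{homtwmap2}. No substantial additional idea is needed beyond carefully tracking these shifts.
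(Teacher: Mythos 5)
Your proposal is correct and follows essentially the same route as the paper: a direct verification of the three Hom-twisting map axioms (\ref{homtwmap0})--(\ref{homtwmap2}) using (\ref{hommod1}), (\ref{hommod2}), (\ref{modalgcompat}) and the Hom-bialgebra identities (\ref{hombia1})--(\ref{hombia3}), followed by an appeal to Proposition \ref{homttp} for the algebra structure and multiplication formula. The only cosmetic difference is that you match both sides of (\ref{homtwmap1}) against a common expression via Hom-coassociativity, whereas the paper rewrites one side entirely into the other; the exponent bookkeeping and the key substitutions (e.g.\ $h\mapsto\alpha_H^{-3}(h_1)$ in (\ref{modalgcompat}) and the inverted form of (\ref{hommod2})) are the same.
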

\begin{proof}
We need to prove that $R$ satisfies the conditions (\ref{homtwmap0})-(\ref{homtwmap2}). \\
\underline{Proof of  (\ref{homtwmap0})}: 
\begin{eqnarray*}
((\alpha _A\ot \alpha _H)\circ R)(h\ot a)&=&\alpha _A(\alpha _H^{-2}(h_1)\cdot \alpha _A^{-1}(a))
\ot \alpha _H(\alpha _H^{-1}(h_2))\\
&\overset{(\ref{hommod1})}{=}&\alpha _H^{-1}(h_1)\cdot a\ot h_2, 
\end{eqnarray*}
\begin{eqnarray*}
(R\circ (\alpha _H\ot \alpha _A))(h\ot a)&=&R(\alpha _H(h)\ot \alpha _A(a))\\
&=&\alpha _H^{-2}(\alpha _H(h)_1)\cdot \alpha _A^{-1}(\alpha _A(a))\ot 
\alpha _H^{-1}(\alpha _H(h)_2)\\
&\overset{(\ref{hombia3})}{=}&\alpha _H^{-2}(\alpha _H(h_1))\cdot a\ot 
\alpha _H^{-1}(\alpha _H(h_2))\\
&=&\alpha _H^{-1}(h_1)\cdot a\ot h_2, \;\;\;q.e.d.
\end{eqnarray*}
\underline{Proof of  (\ref{homtwmap1})}:
\begin{eqnarray*}
(R\circ (\alpha _H\ot \mu_ A))(h\ot a\ot a')&=&R(\alpha _H(h)\ot aa')\\
&=&\alpha _H^{-2}(\alpha _H(h)_1)\cdot \alpha _A^{-1}(aa')\ot 
\alpha _H^{-1}(\alpha _H(h)_2)\\
&\overset{(\ref{hombia3})}{=}&\alpha _H^{-1}(h_1)\cdot \alpha _A^{-1}(aa')\ot h_2, 
\end{eqnarray*}
${\;\;\;\;\;\;}$
$((\mu _A\ot \alpha _H)\circ (id_A\ot R)\circ (R\ot id_A))(h\ot a\ot a')$
\begin{eqnarray*}
&=&((\mu _A\ot \alpha _H)\circ (id_A\ot R))(\alpha _H^{-2}(h_1)\cdot \alpha _A^{-1}(a)\ot 
\alpha _H^{-1}(h_2)\ot a')\\
&=&(\mu _A\ot \alpha _H)(\alpha _H^{-2}(h_1)\cdot \alpha _A^{-1}(a)\ot 
\alpha _H^{-2}(\alpha _H^{-1}(h_2)_1)\cdot \alpha _A^{-1}(a')\ot 
\alpha _H^{-1}(\alpha _H^{-1}(h_2)_2))\\
&\overset{(\ref{hombia3})}{=}&[\alpha _H^{-2}(h_1)\cdot \alpha _A^{-1}(a)]
[\alpha _H^{-3}((h_2)_1)\cdot \alpha _A^{-1}(a')]\ot \alpha _H^{-1}((h_2)_2)\\
&\overset{(\ref{hombia1})}{=}&[\alpha _H^{-3}((h_1)_1)\cdot \alpha _A^{-1}(a)]
[\alpha _H^{-3}((h_1)_2)\cdot \alpha _A^{-1}(a')]\ot h_2\\
&\overset{(\ref{hombia3})}{=}&[\alpha _H^{-3}(h_1)_1\cdot \alpha _A^{-1}(a)]
[\alpha _H^{-3}(h_1)_2\cdot \alpha _A^{-1}(a')]\ot h_2\\
&\overset{(\ref{modalgcompat})}{=}&\alpha _H^{-1}(h_1)\cdot (\alpha _A^{-1}(a)
\alpha _A^{-1}(a'))\ot h_2\\
&=&\alpha _H^{-1}(h_1)\cdot \alpha _A^{-1}(aa') \ot h_2, \;\;\;q.e.d.
\end{eqnarray*}
\underline{Proof of  (\ref{homtwmap2})}:
\begin{eqnarray*}
(R\circ (\mu _H\ot \alpha _A))(h\ot h'\ot a)&=&
R(hh'\ot \alpha _A(a))\\
&=&\alpha _H^{-2}((hh')_1)\cdot \alpha _A^{-1}(\alpha _A(a))\ot 
\alpha _H^{-1}((hh')_2)\\
&\overset{(\ref{hombia2})}{=}&\alpha _H^{-2}(h_1h'_1)\cdot a\ot 
\alpha _H^{-1}(h_2h'_2),
\end{eqnarray*}
${\;\;\;\;\;\;}$
$((\alpha _A\ot \mu _H)\circ (R\ot id_H)\circ (id_H\ot R))(h\ot h'\ot a)$
\begin{eqnarray*}
&=&((\alpha _A\ot \mu _H)\circ (R\ot id_H))(h\ot \alpha _H^{-2}(h'_1)\cdot \alpha _A^{-1}(a)\ot 
\alpha _H^{-1}(h'_2))\\
&=&(\alpha _A\ot \mu _H)(\alpha _H^{-2}(h_1)\cdot \alpha _A^{-1}(\alpha _H^{-2}(h'_1)
\cdot \alpha _A^{-1}(a))\ot \alpha _H^{-1}(h_2)\ot \alpha _H^{-1}(h'_2))\\
&\overset{(\ref{hommod1})}{=}&\alpha _A(\alpha _H^{-2}(h_1)\cdot 
[\alpha _H^{-3}(h'_1)\cdot \alpha _A^{-2}(a)])\ot \alpha _H^{-1}(h_2h'_2)\\
&\overset{(\ref{hommod2})}{=}&\alpha _A([\alpha _H^{-3}(h_1)
\alpha _H^{-3}(h'_1)]\cdot \alpha _A^{-1}(a))\ot \alpha _H^{-1}(h_2h'_2)\\
&\overset{(\ref{hommod1})}{=}&\alpha _H^{-2}(h_1h'_1)\cdot a\ot 
\alpha _H^{-1}(h_2h'_2),
\end{eqnarray*}
finishing the proof.
\end{proof}
\begin{example}
We consider  the class of examples of $U_q(\mathfrak{sl}_2)_{\alpha }$-module Hom-algebra structures on $\mathbb{A}_{q,\beta}^{2|0}$  given in \cite[Example 5.7]{homquantum3} (here we take the base field 
$k=\mathbb{C}$). 
The quantum group $U_q(\mathfrak{sl}_2)$ is generated as a unital associative algebra by 4 generators 
$\{E,F,K,K^{-1}\}$ with relations 
\begin{eqnarray*}
&& K K^{-1}=1=K^{-1} K, \\ &&  K E=q^2 E K, \ KF= q^{-2} FK, \\ && � EF -FE =\frac{K-K^{-1}}{q-q^{-1}}
\end{eqnarray*}
where $q\in \mathbb{C}$ with $q\neq 0$, $q\neq \pm 1$.
The comultiplication is defined by 
\begin{eqnarray*}
&&  \Delta (E)=1\otimes E+ E\otimes K, \\ &&   \Delta (F)=K^{-1}\otimes F+ F\otimes 1,\\
&&  \Delta (K)=K\otimes K,\ \Delta (K^{-1})=K^{-1}\otimes K^{-1}.
 \end{eqnarray*} 
We fix $\lambda \in \mathbb{C}$, $\lambda \neq 0$. 
 The Hom-bialgebra $U_q(\mathfrak{sl}_2)_{\alpha}=(U_q(\mathfrak{sl}_2),\mu_{\alpha},\Delta_{\alpha},\alpha)$ 
is defined by $\mu_{\alpha}=\alpha\circ \mu$ and $\Delta_{\alpha}=\Delta\circ \alpha$, where $\mu$ and $\Delta$ are respectively the multiplication and comultiplication of $U_q(\mathfrak{sl}_2)$ and  
$\alpha:U_q(\mathfrak{sl}_2)\rightarrow U_q(\mathfrak{sl}_2)$ is a bialgebra morphism such that 
\begin{eqnarray*}
&&\alpha (E)=\lambda E, \ \alpha (F)=\lambda^{-1} F, \ \alpha (K)=K, \ \alpha (K^{-1})=K^{-1}.
\end{eqnarray*}
 Let  $\mathbb{A}_{q}^{2|0}=k\langle x,y\rangle /(yx-q xy)$ be the quantum plane. We fix also some 
$\xi \in \mathbb{C}$, $\xi \neq 0$. 
The Hom-quantum plane  $\mathbb{A}_{q,\beta}^{2|0}=(\mathbb{A}_{q}^{2|0},\mu_\beta,\beta)$ is defined by $\mu_\beta=\beta\circ \mu_\mathbb{A}$, where $\mu_\mathbb{A}$ is the multiplication in 
$\mathbb{A}_{q}^{2|0}$ and $\beta: \mathbb{A}_{q}^{2|0}\rightarrow \mathbb{A}_{q}^{2|0} $ 
is an algebra morphism such that $\beta (x)=\xi x,\ \beta (y)= \xi \lambda^{-1} y$. 
Then for any integer $l\geq 0$ there is a   $U_q(\mathfrak{sl}_2)_\alpha$-module Hom-algebra structure on $\mathbb{A}_{q,\beta}^{2|0}$  defined by
\begin{eqnarray*}
&& \rho_l (E, x^my^n )=[n]_q\xi^{m+n}\lambda^{l-n+1} x^{m+1}y^{n-1}\\
&& \rho_l (F, x^my^n )=[m]_q\xi^{m+n}\lambda^{-l-n-1} x^{m-1}y^{n+1} \\
&& \rho_l (K^{\pm 1}, P )=P(q^{\pm 1}\xi x,q^{\mp 1} \xi\lambda^{-1}y),
\end{eqnarray*}
where $P=P(x,y)\in \mathbb{A}_{q}^{2|0}$ and $[n]_q=\frac{q^n-q^{-n}}{q-q^{-1}}$.
 
 Notice that both $\alpha$ and $\beta$ are bijective for $\lambda\neq 0$ and $\xi \neq 0$. According to 
 Theorem \ref{smashThm}, the map $R:U_q(\mathfrak{sl}_2)_\alpha\otimes \mathbb{A}_{q,\beta}^{2|0}
\rightarrow \mathbb{A}_{q,\beta}^{2|0}\otimes U_q(\mathfrak{sl}_2)_\alpha $ defined in \eqref{twmapleft} 
leads to a smash product $\mathbb{A}_{q,\beta}^{2|0}\# U_q(\mathfrak{sl}_2)_\alpha $ whose 
multiplication is defined by   
 \begin{eqnarray*}
&&(a\# h)(a'\# h')=a(\alpha^{-2}(h_1)\cdot \beta^{-1}(a'))\# \alpha^{-1}(h_2)h'.
\end{eqnarray*}
 In particular, if we choose $l=0$, then 
for any $G\in U_q(\mathfrak{sl}_2)$ and $m,n,r,s\in \mathbb{N}$ we have
\begin{eqnarray*}
&&(x^my^n\# K^{\pm 1})(x^ry^s\# G)=q^{\pm r\mp s+n r}\xi^{m+n+r+s}\lambda^{-n-s}x^{m+r}y^{n+s}
\# K ^{\pm 1}\alpha(G),\\
&&(x^my^n\# E)(x^ry^s\# G)=q^{n r}\xi^{m+n+r+s}\lambda^{-n-s+1}x^{m+r}y^{n+s}\# E \alpha(G)\\ 
&& \;\;\;\;\;\;\;\;\;\;\;\;\;\;\;\;\;\;\;\;\;\;\;\;\;\;\;\;\;\;\;\;\;\;\;\;\; +[s]_q q^{n (r+1)} 
\xi ^{m+n+r+s}\lambda^{-n-s+1} x^{m+r+1}y^{n+s-1}\# K \alpha(G),\\
&&(x^my^n\# F)(x^ry^s\# G)=q^{s-r+n r}\xi^{m+n+r+s}\lambda^{-n-s-1}x^{m+r}y^{n+s}\# F \alpha(G)\\
 && \;\;\;\;\;\;\;\;\;\;\;\;\;\;\;\;\;\;\;\;\;\; \;\;\;\;\;\;\;\;\;\;\;\;\;\;\;\;+[r]_q q^{n (r-1)} \xi^{m+n+r+s}
\lambda^{-n-s-1}x^{m+r-1}y^{n+s+1}\#  \alpha(G), 
\end{eqnarray*}
where $K \alpha(G),\ E \alpha(G)$ and $F \alpha(G)$ are multiplications in $U_q(\mathfrak{sl}_2)$.
\end{example}
\begin{proposition}
In the hypotheses of and with notation as in Proposition \ref{deformmodalg}, and 
assuming moreover that the maps $\alpha _H$ and $\alpha _A$ are both bijective, 
if we denote by 
$A\# H$ the usual smash product between $A$ and $H$, then $\alpha _A\ot \alpha _H$ is an 
algebra endomorphism of $A\# H$ and the Hom-associative algebras 
$(A\# H)_{\alpha _A\ot \alpha _H}$ and $A_{\alpha _A}\# H_{\alpha _H}$ coincide.  
\end{proposition}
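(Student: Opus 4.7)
The proof is essentially a direct verification, so I would split it into two computations corresponding to the two assertions of the proposition.

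First, for the claim that $\alpha_A\otimes\alpha_H$ is an algebra endomorphism of the usual smash product $A\# H$, I would start from the defining formula $(a\# h)(a'\# h')=a(h_1\cdot a')\# h_2h'$ and compute
\begin{eqnarray*}
(\alpha_A\otimes\alpha_H)\bigl((a\# h)(a'\# h')\bigr)
&=&\alpha_A\bigl(a(h_1\cdot a')\bigr)\# \alpha_H(h_2h')\\
&=&\alpha_A(a)\bigl(\alpha_H(h_1)\cdot \alpha_A(a')\bigr)\# \alpha_H(h_2)\alpha_H(h'),
\end{eqnarray*}
using that $\alpha_A$ is an algebra endomorphism, that $\alpha_H$ is an algebra endomorphism, and the compatibility $\alpha_A(h\cdot a)=\alpha_H(h)\cdot\alpha_A(a)$ from the hypothesis of Proposition~\ref{pr:deformmodalg}. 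Since $\alpha_H$ is a coalgebra map, $\Delta(\alpha_H(h))=\alpha_H(h_1)\otimes\alpha_H(h_2)$, so the last expression is exactly $(\alpha_A(a)\#\alpha_H(h))(\alpha_A(a')\#\alpha_H(h'))$, as required.

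Second, for the coincidence of the Hom-associative structures, I would compute both multiplications explicitly and check they agree. The multiplication of $(A\# H)_{\alpha_A\otimes\alpha_H}$ is by definition $(\alpha_A\otimes\alpha_H)$ applied to the ordinary smash product, which by the computation above equals
\begin{eqnarray*}
\alpha_A(a)\bigl(\alpha_H(h_1)\cdot \alpha_A(a')\bigr)\# \alpha_H(h_2)\alpha_H(h').
\end{eqnarray*}
On the other hand, for $A_{\alpha_A}\# H_{\alpha_H}$ one applies Theorem~\ref{th:smashThm} to the Hom-bialgebra $H_{\alpha_H}$, which has comultiplication $\Delta_{\alpha_H}=\Delta\circ\alpha_H$ sending $h$ to $\alpha_H(h_1)\otimes\alpha_H(h_2)$, and to the $H_{\alpha_H}$-module Hom-algebra $A_{\alpha_A}$ with action $h\triangleright a=\alpha_H(h)\cdot\alpha_A(a)$ provided by Proposition~\ref{pr:deformmodalg}. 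Writing out the formula from Theorem~\ref{th:smashThm} and substituting, the factors $\alpha_H^{-2}$ cancel against the $\alpha_H$ coming from $\Delta_{\alpha_H}$ and against the $\alpha_H$ built into $\triangleright$, and similarly for $\alpha_A^{-1}$; after these cancellations the product collapses to the same expression displayed above.

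The only bookkeeping to be careful with is the interplay between $\alpha_H^{-1},\alpha_H^{-2}$ in the formula of Theorem~\ref{th:smashThm}, the shift by $\alpha_H$ built into $\Delta_{\alpha_H}$, and the extra $\alpha_H$ and $\alpha_A$ hidden in the twisted action $\triangleright$; once these are lined up, no nontrivial identity is needed. I do not foresee a genuine obstacle: the whole proof is a direct evaluation, and the bijectivity of $\alpha_H$ and $\alpha_A$ is used only to make the formula of Theorem~\ref{th:smashThm} well defined.
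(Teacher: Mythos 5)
Your proof is correct, but it follows a more direct route than the paper. The paper's own proof is structural: it observes that the ordinary smash product $A\# H$ is the twisted tensor product $A\otimes _PH$ for the twisting map $P(h\otimes a)=h_1\cdot a\otimes h_2$, checks the single relation $(\alpha _A\otimes \alpha _H)\circ P=P\circ (\alpha _H\otimes \alpha _A)$ (using $\alpha _A(h\cdot a)=\alpha _H(h)\cdot \alpha _A(a)$), invokes Proposition \ref{deformttp} to conclude both that $\alpha _A\otimes \alpha _H$ is an algebra endomorphism and that $(A\# H)_{\alpha _A\otimes \alpha _H}=A_{\alpha _A}\otimes _PH_{\alpha _H}$, and then finishes by computing that the Hom-twisting map $R$ of Theorem \ref{smashThm} for $A_{\alpha _A}$ and $H_{\alpha _H}$ equals $P$. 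You instead verify the endomorphism property by hand and compare the two multiplications element-wise; your cancellation of the $\alpha _H^{-2}$, $\alpha _H^{-1}$ and $\alpha _A^{-1}$ against the shifts hidden in $\Delta _{\alpha _H}$ and in the action $\triangleright$ is exactly the computation the paper performs when it shows $R=P$, just carried out one level lower (on the multiplication rather than on the twisting map). Both arguments are complete; the paper's buys reuse of the general result on deformed twisted tensor products (and makes clear that the identification already happens at the level of twisting maps), while yours is self-contained and avoids the detour through Proposition \ref{deformttp}. It would strengthen your write-up to actually display the evaluated formula from Theorem \ref{smashThm}, i.e.
\begin{eqnarray*}
(a\# h)(a'\# h')&=&\alpha _A\bigl(a\,(\alpha _H(\alpha _H^{-2}(\alpha _H(h_1)))\cdot \alpha _A(\alpha _A^{-1}(a')))\bigr)\# \alpha _H\bigl(\alpha _H^{-1}(\alpha _H(h_2))h'\bigr)\\
&=&\alpha _A(a)\bigl(\alpha _H(h_1)\cdot \alpha _A(a')\bigr)\# \alpha _H(h_2)\alpha _H(h'),
\end{eqnarray*}
rather than only describing the cancellations in words, but the argument as you outline it is sound, and you are right that bijectivity enters only to make the Hom-smash product of Theorem \ref{smashThm} available.
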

\begin{proof}
We recall that the multiplication of $A\# H$ is defined by $(a\# h)(a'\# h')=a(h_1\cdot a')\# h_2h'$, 
and the smash product $A\# H$ is the twisted tensor product $A\ot _PH$, where $P$ is the 
twisting map $P:H\ot A\rightarrow A\ot H$,  $P(h\ot a)=h_1\cdot a\ot h_2$. By using the condition 
$\alpha _A(h\cdot a)=\alpha _H(h)\cdot \alpha _A(a)$, one can prove immediately 
that we have $(\alpha _A\ot \alpha _H)\circ P=P\circ (\alpha _H\ot \alpha _A)$. We are 
thus in the hypotheses of  Proposition \ref{deformttp}, so the Hom-associative algebras 
$(A\# H)_{\alpha _A\ot \alpha _H}$ and $A_{\alpha _A}\ot _PH_{\alpha _H}$ coincide. 
So, the proof will be finished if we show that the Hom-twisting map $R$ affording 
the Hom-smash product $A_{\alpha _A}\# H_{\alpha _H}$ and the map $P$ actually coincide. 
We compute this map $R$ (using the structures of $A_{\alpha _A}$ and $H_{\alpha _H}$): 
\begin{eqnarray*}
R(h\ot a)&=&\alpha _H^{-2}(\alpha _H(h)_1)\triangleright \alpha _A^{-1}(a)\ot 
\alpha _H^{-1}(\alpha _H(h)_2)\\
&=& \alpha _H^{-1}(h_1)\triangleright \alpha _A^{-1}(a)\ot h_2\\
&=&h_1\cdot a\ot h_2=P(h\ot a), 
\end{eqnarray*}
so indeed we have $R=P$.   
\end{proof}

We will need in what follows the right-handed and two-sided analogues of left comodules 
and comodule algebras over Hom-coassociative coalgebras and Hom-bialgebras. 
\begin{definition}  Let $(C, \Delta _C , \alpha _C)$ be a Hom-coassociative coalgebra, 
$M$ a linear space and $\alpha _M:M\rightarrow M$ a linear map. \\
(i) A {\em right $C$-comodule} structure on $(M, \alpha _M)$ consists of a linear map 
$\rho :M\rightarrow M\ot C$ satisfying the following conditions:
\begin{eqnarray}
&&(\alpha _M\ot \alpha _C)\circ \rho =\rho \circ \alpha _M,  
\label{rightcom1}\\
&&(\alpha _M\ot \Delta _C)\circ \rho =(\rho \ot \alpha _C)\circ \rho . 
\label{rightcom2}
\end{eqnarray} 
(ii) If $(M, \alpha _M)$ is both a left $C$-comodule with structure $\lambda :M\rightarrow C\ot M$ 
and a right $C$-comodule with structure $\rho :M\rightarrow M\ot C$, then $M$ is called 
a {\em $C$-bicomodule} if 
$(\lambda \ot \alpha _C)\circ \rho =(\alpha _C\ot \rho )\circ \lambda $. 

Obviously, $(C, \alpha _C)$ itself is a $C$-bicomodule, with $\rho =\lambda =\Delta _C$. 
\end{definition}
\begin{definition}
Let $(H, \mu _H, \Delta _H, \alpha _H)$ be a Hom-bialgebra. \\
(i) A {\em right $H$-comodule Hom-algebra} 
is a Hom-associative algebra   
$(D, \mu _D, \alpha _D)$ endowed with a right $H$-comodule structure 
$\rho _D:D\rightarrow D\ot H$ such that $\rho _D$ is a morphism of Hom-associative algebras. \\
(ii) An {\em $H$-bicomodule Hom-algebra} is a Hom-associative algebra $(D, \mu _D, \alpha _D)$ 
that is both a left and a right $H$-comodule Hom-algebra and such that the left and right 
$H$-comodule structures form an $H$-bicomodule. 
\end{definition}
\begin{proposition} \label{smrca}
Let $(H, \mu _H, \Delta _H, \alpha _H)$ be a Hom-bialgebra and  
$(A, \mu _A, \alpha _A)$ a left $H$-module Hom-algebra, with action denoted by 
$H\ot A\rightarrow A$, $h\ot a\mapsto h\cdot a$, such that the structure maps 
$\alpha _H$ and $\alpha _A$ are both bijective. Then the Hom-smash product 
$A\# H$ is a right $H$-comodule Hom-algebra, via the linear map 
$\rho _{A\# H}:A\# H\rightarrow (A\# H)\ot H$, 
$\rho _{A\# H}(a\# h)=(\alpha _A(a)\# h_1)\ot h_2$. 
\end{proposition}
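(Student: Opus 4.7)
The plan is to verify the two sets of axioms required by the definition of a right $H$-comodule Hom-algebra: first, that $\rho_{A\# H}$ defines a right $H$-comodule structure on $(A\# H, \alpha_A\ot \alpha_H)$, and second, that this $\rho_{A\# H}$ is a morphism of Hom-associative algebras. For the bookkeeping I will use the Sweedler notation $\Delta_H(h)=h_1\ot h_2$, together with the multiplication formula for $A\# H$ established in \thref{smashThm}.

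For the comodule axioms, the compatibility $(\alpha_A\ot \alpha_H\ot \alpha_H)\circ \rho_{A\# H}=\rho_{A\# H}\circ (\alpha_A\ot \alpha_H)$ reduces, after expanding both sides, to the multiplicativity of $\alpha_A$ applied to the first factor and the comultiplicativity relation (\ref{hombia3}) applied to $\Delta_H(\alpha_H(h))=\alpha_H(h_1)\ot \alpha_H(h_2)$. The coassociativity condition $(\alpha_A\ot \alpha_H\ot \Delta_H)\circ \rho_{A\# H}=(\rho_{A\# H}\ot \alpha_H)\circ \rho_{A\# H}$ unfolds to an identity on $a\# h$ whose two sides are $\alpha_A^2(a)\ot \alpha_H(h_1)\ot (h_2)_1\ot (h_2)_2$ and $\alpha_A^2(a)\ot (h_1)_1\ot (h_1)_2\ot \alpha_H(h_2)$; these agree by the Hom-coassociativity of $\Delta_H$.

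The main step is checking that $\rho_{A\# H}$ is multiplicative, i.e.\ $\rho_{A\# H}\circ \mu_{A\# H}=\mu_{(A\# H)\ot H}\circ (\rho_{A\# H}\ot \rho_{A\# H})$. Applied to $(a\# h)\ot (a'\# h')$, the left-hand side becomes
\begin{eqnarray*}
\alpha_A(a)(\alpha_H^{-1}(h_1)\cdot a')\# \alpha_H^{-1}((h_2)_1)h'_1\ot \alpha_H^{-1}((h_2)_2)h'_2,
\end{eqnarray*}
after using multiplicativity of $\alpha_A$, the formula $\alpha_A(\alpha_H^{-2}(h_1)\cdot \alpha_A^{-1}(a'))=\alpha_H^{-1}(h_1)\cdot a'$ (from (\ref{hommod1})), and comultiplicativity applied to $\Delta_H(\alpha_H^{-1}(h_2)h')$. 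On the other hand, using the smash product multiplication on the first tensor factor, the right-hand side simplifies to
\begin{eqnarray*}
\alpha_A(a)(\alpha_H^{-2}((h_1)_1)\cdot a')\# \alpha_H^{-1}((h_1)_2)h'_1\ot h_2h'_2.
\end{eqnarray*}
The equality of the two expressions follows by applying $\alpha_H^{-2}\ot \alpha_H^{-1}\ot \alpha_H^{-1}$ to the Hom-coassociativity identity $(h_1)_1\ot (h_1)_2\ot \alpha_H(h_2)=\alpha_H(h_1)\ot (h_2)_1\ot (h_2)_2$, which yields exactly
\begin{eqnarray*}
\alpha_H^{-2}((h_1)_1)\ot \alpha_H^{-1}((h_1)_2)\ot h_2=\alpha_H^{-1}(h_1)\ot \alpha_H^{-1}((h_2)_1)\ot \alpha_H^{-1}((h_2)_2).
\end{eqnarray*}

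I expect the only real obstacle to be organizing the multiplicativity calculation, since it mixes the multiplication rule of the Hom-smash product (with its several occurrences of $\alpha_H^{-1}$, $\alpha_H^{-2}$, and $\alpha_A^{-1}$) with the comultiplication $\Delta_H$. The key simplification is to do the computation in the order above: first pull out $\alpha_A$ from the first tensor factor using multiplicativity of $\alpha_A$ and the Hom-module axiom (\ref{hommod1}), then invoke comultiplicativity (\ref{hombia3}) and Hom-coassociativity only once each, rather than trying to match every term directly. Everything else is routine, and once multiplicativity is established the proposition is immediate.
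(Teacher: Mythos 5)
Your proposal is correct and follows essentially the same route as the paper: verify the two comodule axioms using (\ref{hombia3}) and the Hom-coassociativity relation (\ref{hombia1}), then check multiplicativity of $\rho_{A\# H}$ by expanding both sides via the smash product multiplication, (\ref{hommod1}), (\ref{hombia2}), (\ref{hombia3}), and matching them through (\ref{hombia1}) (you apply $\alpha_H^{-2}\ot\alpha_H^{-1}\ot\alpha_H^{-1}$ to it, the paper rewrites the left-hand side directly, which is the same step). No gaps.
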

\begin{proof}
First we prove (\ref{rightcom2}) for the map $\rho _{A\# H}$: 
\begin{eqnarray*}
((\alpha _A\ot \alpha _H\ot \Delta _H)\circ \rho _{A\# H})(a\# h)&=&
\alpha _A^2(a)\# \alpha _H(h_1)\ot (h_2)_1\ot (h_2)_2\\
&\overset{(\ref{hombia1})}{=}&\alpha _A^2(a)\# (h_1)_1\ot (h_1)_2\ot \alpha _H(h_2)\\
&=&((\rho _{A\# H}\ot \alpha _H)\circ \rho _{A\# H})(a\# h), \;\;\;q.e.d.
\end{eqnarray*}
The relation $(\alpha _A\ot \alpha _H\ot \alpha _H)\circ \rho _{A\# H}=
\rho _{A\# H}\circ (\alpha _A\ot \alpha _H)$ follows immediately from (\ref{hombia3}), 
so the only thing left to prove is the multiplicativity of $\rho _{A\# H}$; we compute:
\begin{eqnarray*}
\rho _{A\# H}((a\# h)(a'\# h'))&=&\rho _{A\# H}(a(\alpha _H^{-2}(h_1)\cdot \alpha _A^{-1}(a'))
\# \alpha _H^{-1}(h_2)h')\\
&=&\alpha _A(a(\alpha _H^{-2}(h_1)\cdot \alpha _A^{-1}(a')))\# (\alpha _H^{-1}(h_2)h')_1\ot 
(\alpha _H^{-1}(h_2)h')_2\\
&\overset{(\ref{hommod1}), (\ref{hombia2})}{=}&\alpha _A(a)(\alpha _H^{-1}(h_1)\cdot a')
\# \alpha _H^{-1}(h_2)_1h'_1\ot  \alpha _H^{-1}(h_2)_2h'_2\\
&\overset{(\ref{hombia3})}{=}&\alpha _A(a)(\alpha _H^{-1}(h_1)\cdot a')
\# \alpha _H^{-1}((h_2)_1)h'_1\ot  \alpha _H^{-1}((h_2)_2)h'_2\\
&\overset{(\ref{hombia1})}{=}&\alpha _A(a)(\alpha _H^{-2}((h_1)_1)\cdot a')
\# \alpha _H^{-1}((h_1)_2)h'_1\ot  h_2h'_2, 
\end{eqnarray*}
\begin{eqnarray*}
\rho _{A\# H}(a\# h)\rho _{A\# H}(a'\# h')&=&((\alpha _A(a)\# h_1)\ot h_2)
((\alpha _A(a')\# h'_1)\ot h'_2)\\
&=&(\alpha _A(a)\# h_1)(\alpha _A(a')\# h'_1)\ot h_2h'_2\\
&=&\alpha _A(a)(\alpha _H^{-2}((h_1)_1)\cdot a')
\# \alpha _H^{-1}((h_1)_2)h'_1\ot  h_2h'_2, 
\end{eqnarray*}
and obviously the two terms are equal. 
\end{proof}
\begin{definition} (\cite{PanaiteMakhlouf}) 
Let $(H, \mu _H, \Delta _H, \alpha _H)$ be a Hom-bialgebra, $M$ a linear space and 
$\alpha _M:M\rightarrow M$ a linear map such that $(M, \alpha _M)$ is a left $H$-module with 
action $H\ot M\rightarrow M$, $h\ot m\mapsto h\cdot m$ and a 
left $H$-comodule with coaction $M\rightarrow H\ot M$, $m\mapsto m_{(-1)}\ot m_{(0)}$. 
Then $(M, \alpha _M)$ is called a (left-left) {\em Yetter-Drinfeld module} over $H$  if the 
following relation holds, for all $h\in H$, $m\in M$:
\begin{eqnarray}
&&(h_1\cdot m)_{(-1)}\alpha _H^2(h_2)\ot (h_1\cdot m)_{(0)}=
\alpha _H^2(h_1)\alpha _H(m_{(-1)})\ot \alpha _H(h_2)\cdot m_{(0)}. 
\label{homYD}
\end{eqnarray}
\end{definition}
\begin{lemma}
Let $(A, \mu , \alpha )$ be a Hom-associative algebra such that $\alpha $ is bijective and let $a, b, c, d\in A$. 
Then the following relation holds: 
\begin{eqnarray}
&&(ab)(cd)=\alpha (a)(\alpha ^{-1}(bc)d). \label{4elem}
\end{eqnarray}
\end{lemma}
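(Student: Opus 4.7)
The plan is to apply Hom-associativity twice, using the bijectivity of $\alpha$ (and the fact that $\alpha^{-1}$ is then automatically multiplicative) to shuffle the $\alpha$'s to the right places. Recall that Hom-associativity reads $\alpha(x)(yz)=(xy)\alpha(z)$ for all $x,y,z\in A$, and that since $\alpha$ is a bijective algebra morphism with respect to $\mu$, the inverse $\alpha^{-1}$ is also multiplicative, i.e. $\alpha^{-1}(uv)=\alpha^{-1}(u)\alpha^{-1}(v)$.

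First, I would rewrite the right-hand factor $cd$ as $\alpha(\alpha^{-1}(cd))$ and apply Hom-associativity (read from right to left, with $x=a$, $y=b$, $z=\alpha^{-1}(cd)$) to obtain
\begin{eqnarray*}
(ab)(cd)=(ab)\alpha(\alpha^{-1}(cd))=\alpha(a)\bigl(b\,\alpha^{-1}(cd)\bigr).
\end{eqnarray*}
Next, I would use multiplicativity of $\alpha^{-1}$ to expand $\alpha^{-1}(cd)=\alpha^{-1}(c)\alpha^{-1}(d)$, and write $b=\alpha(\alpha^{-1}(b))$ so that the inner factor becomes
\begin{eqnarray*}
b\,\alpha^{-1}(cd)=\alpha(\alpha^{-1}(b))\bigl(\alpha^{-1}(c)\alpha^{-1}(d)\bigr).
\end{eqnarray*}
Applying Hom-associativity in the forward direction (with $x=\alpha^{-1}(b)$, $y=\alpha^{-1}(c)$, $z=\alpha^{-1}(d)$) yields
\begin{eqnarray*}
\alpha(\alpha^{-1}(b))\bigl(\alpha^{-1}(c)\alpha^{-1}(d)\bigr)=\bigl(\alpha^{-1}(b)\alpha^{-1}(c)\bigr)\alpha(\alpha^{-1}(d))=\alpha^{-1}(bc)\,d,
\end{eqnarray*}
where in the last step I again use multiplicativity of $\alpha^{-1}$. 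Substituting this back into the previous display gives $(ab)(cd)=\alpha(a)\bigl(\alpha^{-1}(bc)\,d\bigr)$, which is precisely the identity $(\ref{4elem})$.

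There is no real obstacle here: the only subtlety is the bookkeeping of the $\alpha$'s, and the key point to justify once is that $\alpha^{-1}$ inherits multiplicativity from $\alpha$, which is immediate from the bijectivity assumption. The two invocations of Hom-associativity are symmetric (one in each direction), with $\alpha^{-1}$ used to produce the $\alpha$'s demanded by the Hom-associativity axiom.
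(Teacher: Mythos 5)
Your argument is correct: the two applications of Hom-associativity (one in each direction) together with the multiplicativity of $\alpha^{-1}$, which indeed follows from bijectivity, give exactly the identity $(ab)(cd)=\alpha(a)(\alpha^{-1}(bc)d)$. The paper states only that this is a straightforward computation from the definition of a Hom-associative algebra, and your computation is precisely the intended one, just written out in detail.
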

\begin{proof}
A straightforward computation, using the definition of a Hom-associative algebra. 
\end{proof}
\begin{proposition}
In the hypotheses of and with notation as in Proposition \ref{smrca}, assume that moreover 
$(A, \alpha _A)$ is a left $H$-comodule with structure $A\rightarrow H\ot A$, 
$a\mapsto a_{(-1)}\ot a_{(0)}$, such that $(A, \mu _A, \alpha _A)$ is a left $H$-comodule Hom-algebra 
and $(A, \alpha _A)$ is a (left-left) Yetter-Drinfeld module  over $H$. Then $A\# H$ is an 
$H$-bicomodule Hom-algebra, via the map $\rho _{A\# H}$ defined in Proposition \ref{smrca} and the linear 
map $\lambda _{A\# H}:A\# H\rightarrow H\ot (A\# H)$, $\lambda _{A\# H}(a\# h)=
a_{(-1)}h_1\ot (a_{(0)}\# h_2)$. 
\end{proposition}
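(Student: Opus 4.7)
The plan is to verify three things in turn: (i) that $\lambda_{A\# H}$ makes $(A\# H,\alpha_A\ot \alpha_H)$ into a left $H$-comodule, (ii) that $\lambda_{A\# H}$ is multiplicative, so that $A\# H$ becomes a left $H$-comodule Hom-algebra, and (iii) that $\lambda_{A\# H}$ is compatible with $\rho_{A\# H}$ from \prref{smrca} in the $H$-bicomodule sense. The right comodule Hom-algebra structure was already established in \prref{smrca}, so these three points exhaust the claim.

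For (i), the relation $(\alpha_H\ot \alpha_A\ot \alpha_H)\circ \lambda_{A\# H}=\lambda_{A\# H}\circ (\alpha_A\ot \alpha_H)$ follows at once by combining the fact that $\lambda_A:A\to H\ot A$ is Hom-comodule compatible (so $\alpha_A(a)_{(-1)}\ot \alpha_A(a)_{(0)}=\alpha_H(a_{(-1)})\ot \alpha_A(a_{(0)})$), the Hom-bialgebra identity (\ref{hombia3}) and the multiplicativity of $\alpha_H$. Hom-coassociativity of $\lambda_{A\# H}$ unfolds to an identity whose two sides differ only by reshufflings that are resolved using (\ref{hombia1}) for $\Delta_H$ on $h$ together with the left Hom-comodule axiom (\ref{leftcom2}) for $\lambda_A$ on $a$; these are the only two nontrivial inputs, and the remaining $\alpha_H$-factors line up by (\ref{hombia3}). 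Part (iii), the bicomodule compatibility $(\lambda_{A\# H}\ot \alpha_H)\circ \rho_{A\# H}=(\alpha_H\ot \rho_{A\# H})\circ \lambda_{A\# H}$, reduces after unfolding to a single application of (\ref{hombia1}) to $h$, plus the compatibility of $\lambda_A$ with $\alpha_A$; so this is essentially bookkeeping.

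The main obstacle is (ii), the multiplicativity of $\lambda_{A\# H}$, which is where the Yetter--Drinfeld condition (\ref{homYD}) must enter. Expanding $\lambda_{A\# H}((a\# h)(a'\# h'))$ using the Hom-smash product multiplication gives a term involving $(a(\alpha_H^{-2}(h_1)\cdot \alpha_A^{-1}(a')))_{(-1)}$ and $(\alpha_H^{-1}(h_2)h')_1$, which one rewrites using the fact that $\lambda_A$ is a morphism of Hom-associative algebras (so the coaction on a product factors as the product of the coactions), the $H$-module axioms (\ref{hommod1})--(\ref{hommod2}), and (\ref{hombia2})--(\ref{hombia3}) for $\Delta_H$. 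On the other side, one expands $\lambda_{A\# H}(a\# h)\lambda_{A\# H}(a'\# h')$ using the tensor product multiplication on $H\ot (A\# H)$ and then the Hom-smash product multiplication in the $A\# H$-slot. After matching the $\alpha$-powers with the help of the identity (\ref{4elem}) of the preceding lemma, the two expressions coincide precisely by applying (\ref{homYD}) to swap $(h_1\cdot a')_{(-1)}$ past $\alpha_H^2(h_2)$; without the Yetter--Drinfeld identity there is no way to reconcile the order in which the first tensor factors appear.

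Finally, once all three points are checked, one concludes that $(A\# H,\mu_{A\# H},\alpha_A\ot \alpha_H)$ equipped with the pair $(\lambda_{A\# H},\rho_{A\# H})$ satisfies the definition of an $H$-bicomodule Hom-algebra, completing the proof. I expect the computation for (ii) to be the longest and most delicate, since it is the only place where the Yetter--Drinfeld axiom is genuinely used and where all the $\alpha^{\pm k}$ exponents must be tracked carefully.
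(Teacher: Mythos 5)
Your plan matches the paper's proof essentially step for step: the right comodule Hom-algebra structure is taken from Proposition \ref{smrca}, the left $H$-comodule structure of $\lambda_{A\# H}$ is the tensor-product coaction on $A\ot H$ (the paper simply cites Yau's result for this rather than re-verifying), the bicomodule compatibility is settled by (\ref{hombia1}) and (\ref{leftcom1}), and the multiplicativity of $\lambda_{A\# H}$ is carried out exactly as you describe, using the multiplicativity of $\lambda_A$ and $\Delta_H$, the re-association identity (\ref{4elem}), and the Yetter--Drinfeld condition (\ref{homYD}) at the decisive step. So the proposal is correct and takes essentially the same route as the paper.
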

\begin{proof}
The fact that  $\lambda _{A\# H}$ is a left $H$-comodule structure follows from \cite{homquantum3}, 
Prop. 5.3 ($\lambda _{A\# H}$ is just the tensor product of the left $H$-comodules $A$ and $H$). 
We check the bicomodule condition:
\begin{eqnarray*}
((\lambda _{A\# H}\ot \alpha _H)\circ \rho _{A\# H})(a\# h)&=&
(\lambda _{A\# H}\ot \alpha _H)((\alpha _A(a)\# h_1)\ot h_2)\\
&=&\alpha _A(a)_{(-1)}(h_1)_1\ot (\alpha _A(a)_{(0)}\# (h_1)_2)\ot \alpha _H(h_2)\\
&\overset{(\ref{hombia1}), \;(\ref{leftcom1})}{=}&
\alpha _H(a_{(-1)})\alpha _H(h_1)\ot (\alpha _A(a_{(0)})\# (h_2)_1)\ot (h_2)_2\\
&=&(\alpha _H\ot \rho _{A\# H})(a_{(-1)}h_1\ot (a_{(0)}\# h_2))\\
&=&((\alpha _H\ot \rho _{A\# H})\circ \lambda _{A\# H})(a\# h), \;\;\;q.e.d.
\end{eqnarray*}
The only thing left to prove is that $\lambda _{A\# H}$ is multiplicative. We compute:\\[2mm]
${\;\;\;}$
$\lambda _{A\# H}((a\# h)(a'\# h'))$
\begin{eqnarray*}
&=&\lambda (a(\alpha _H^{-2}(h_1)\cdot \alpha _A^{-1}(a'))
\# \alpha _H^{-1}(h_2)h')\\
&=&[a_{(-1)}(\alpha _H^{-2}(h_1)\cdot \alpha _A^{-1}(a'))_{(-1)}] 
[\alpha _H^{-1}(h_2)_1h'_1]
\ot a_{(0)}(\alpha _H^{-2}(h_1)\cdot \alpha _A^{-1}(a'))_{(0)}\# 
\alpha _H^{-1}(h_2)_2h'_2\\
&=&[a_{(-1)}(\alpha _H^{-2}(h_1)\cdot \alpha _A^{-1}(a'))_{(-1)}] 
[\alpha _H^{-1}((h_2)_1)h'_1]
\ot a_{(0)}(\alpha _H^{-2}(h_1)\cdot \alpha _A^{-1}(a'))_{(0)}\\
&&\# 
\alpha _H^{-1}((h_2)_2)h'_2\\
&\overset{(\ref{hombia1})}{=}&
[a_{(-1)}(\alpha _H^{-3}((h_1)_1)\cdot \alpha _A^{-1}(a'))_{(-1)}] 
[\alpha _H^{-1}((h_1)_2)h'_1]
\ot a_{(0)}(\alpha _H^{-3}((h_1)_1)\cdot \alpha _A^{-1}(a'))_{(0)}\# 
h_2h'_2\\
&\overset{(\ref{4elem})}{=}&\alpha _H(a_{(-1)})
\{\alpha _H^{-1}[(\alpha _H^{-3}((h_1)_1)\cdot \alpha _A^{-1}(a'))_{(-1)}
\alpha _H^{-1}((h_1)_2)]h'_1\}\\
&&\ot a_{(0)}(\alpha _H^{-3}((h_1)_1)\cdot \alpha _A^{-1}(a'))_{(0)}\# 
h_2h'_2\\
&=&\alpha _H(a_{(-1)})
\{\alpha _H^{-1}[(\alpha _H^{-3}((h_1)_1)\cdot \alpha _A^{-1}(a'))_{(-1)}
\alpha _H^2(\alpha _H^{-3}((h_1)_2))]h'_1\}\\
&&\ot a_{(0)}(\alpha _H^{-3}((h_1)_1)\cdot \alpha _A^{-1}(a'))_{(0)}\# 
h_2h'_2\\
&=&\alpha _H(a_{(-1)})
\{\alpha _H^{-1}[(\alpha _H^{-3}(h_1)_1\cdot \alpha _A^{-1}(a'))_{(-1)}\alpha _H^2
(\alpha _H^{-3}(h_1)_2)]h'_1\}\\
&&\ot a_{(0)}(\alpha _H^{-3}(h_1)_1\cdot \alpha _A^{-1}(a'))_{(0)}\# 
h_2h'_2\\
&\overset{(\ref{homYD})}{=}&\alpha _H(a_{(-1)})
\{\alpha _H^{-1}[\alpha _H^2(\alpha _H^{-3}(h_1)_1)\alpha _H(\alpha _A^{-1}(a')_{(-1)})]h'_1\}\\
&&\ot a_{(0)}(\alpha _H(\alpha _H^{-3}(h_1)_2)\cdot \alpha _A^{-1}(a')_{(0)})\# 
h_2h'_2\\
&\overset{(\ref{leftcom1})}{=}&\alpha _H(a_{(-1)})
[\alpha _H^{-1}(\alpha _H^{-1}(h_1)_1a'_{(-1)})h'_1]
\ot a_{(0)}(\alpha _H^{-1}(\alpha _H^{-1}(h_1)_2)\cdot \alpha _A^{-1}(a'_{(0)}))\# 
h_2h'_2\\
&\overset{(\ref{4elem})}{=}&[a_{(-1)}\alpha _H^{-1}((h_1)_1)][a'_{(-1)}h'_1]\ot 
a_{(0)}(\alpha _H^{-2}((h_1)_2)\cdot \alpha _A^{-1}(a'_{(0)}))\# 
h_2h'_2\\
&\overset{(\ref{hombia1})}{=}&[a_{(-1)}h_1][a'_{(-1)}h'_1]\ot 
a_{(0)}(\alpha _H^{-2}((h_2)_1)\cdot \alpha _A^{-1}(a'_{(0)}))\# 
\alpha _H^{-1}((h_2)_2)h'_2\\
&=&[a_{(-1)}h_1][a'_{(-1)}h'_1]\ot 
(a_{(0)}\# h_2)(a'_{(0)}\# h'_2)\\
&=&\lambda _{A\# H}(a\# h)\lambda _{A\# H}(a'\# h'),
\end{eqnarray*}
finishing the proof.
\end{proof}

In order to define two-sided Hom-smash products, we need first the right-handed 
versions of some concepts and results presented so far; the proofs of these analogues 
are left to the reader.  
\begin{definition} 
Let $(A, \mu _A , \alpha _A)$ be a Hom-associative algebra, $M$ a linear space and $\alpha _M:M
\rightarrow M$ a linear map. A {\em right $A$-module} structure on $(M, \alpha _M)$ consists of a linear map 
$M\ot A\rightarrow M$, $m\ot a\mapsto m\cdot a$, satisfying the conditions:
\begin{eqnarray}
&&\alpha _M(m\cdot a)=\alpha _M(m)\cdot \alpha _A(a), \label{righthommod1}\\
&&(m\cdot a)\cdot \alpha _A(a')=\alpha _M(m)\cdot (aa'), \label{righthommod2}
\end{eqnarray} 
for all $a, a'\in A$ and $m\in M$. If $(M, \alpha _M)$ and $(N, \alpha _N)$ are right $A$-modules (both 
$A$-actions denoted by $\cdot$),  
a morphism of right $A$-modules $f:M\rightarrow N$ is a linear map satisfying the conditions 
$\alpha _N\circ f=f\circ \alpha _M$  and $f(m\cdot a)=f(m)\cdot a$, for all $a\in A$ and $m\in M$. 
\end{definition}
\begin{definition} 
Assume that $(H, \mu _H, \Delta _H, \alpha _H)$ is a Hom-bialgebra. A Hom-associative algebra 
$(C, \mu _C, \alpha _C)$ is called a {\em right $H$-module Hom-algebra} if $(C, \alpha _C)$ is a 
right $H$-module, 
with action denoted by $C\ot H\rightarrow C$, $c\ot h\mapsto c\cdot h$, such that the following 
condition is satisfied: 
\begin{eqnarray}
&&(cc')\cdot \alpha _H^2(h)=(c\cdot h_1)(c'\cdot h_2), \;\;\;\forall \;h\in H, \;c, c'\in C. 
\end{eqnarray} 
\end{definition}
\begin{proposition}  \label{rightdefmodalg}
Let $(H, \mu _H, \Delta _H)$ be a bialgebra and $(C, \mu _C)$ a right $H$-module algebra in the usual sense, 
with action denoted by $C\ot H\rightarrow C$, $c\ot h\mapsto c\cdot h$. Let $\alpha _H:H\rightarrow H$ 
be a bialgebra endomorphism and $\alpha _C:C\rightarrow C$ an algebra endomorphism, such that 
$\alpha _C(c\cdot h)=\alpha _C(c)\cdot \alpha _H(h)$, for all $h\in H$ and $c\in C$. Then 
the Hom-associative algebra $C_{\alpha _C}=(C, \alpha _C\circ \mu _C, \alpha _C)$  
becomes a right module Hom-algebra over the Hom-bialgebra 
$H_{\alpha _H}=(H, \alpha _H\circ \mu _H, \Delta _H\circ \alpha _H, \alpha _H)$, 
with action defined by  
$C_{\alpha _C}\ot H_{\alpha _H}\rightarrow C_{\alpha _C}$, $c\ot h\mapsto c\triangleleft h:=
\alpha _C(c\cdot h)=\alpha _C(c)\cdot \alpha _H(h)$. 
\end{proposition}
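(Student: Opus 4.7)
The proof will proceed as a routine, right-handed mirror of the argument for Proposition~\ref{deformmodalg}, relying throughout on the compatibility hypothesis $\alpha_C(c\cdot h)=\alpha_C(c)\cdot\alpha_H(h)$ (equivalently, the two expressions defining $c\triangleleft h$ really do agree). The plan is first to check that $\triangleleft$ makes $(C,\alpha_C)$ into a right $H_{\alpha_H}$-module, and then to verify the right-module Hom-algebra compatibility (the mirror of (\ref{modalgcompat})). All computations are short rewrites using (i) the compatibility above, (ii) that $\alpha_C$ is an algebra endomorphism, (iii) that $\alpha_H$ is a bialgebra endomorphism, and (iv) the classical right $H$-module algebra axioms for $(C,\mu_C)$.

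For the multiplicativity axiom (\ref{righthommod1}) I would compute $\alpha_C(c\triangleleft h)=\alpha_C(\alpha_C(c)\cdot\alpha_H(h))=\alpha_C^2(c)\cdot\alpha_H^2(h)=\alpha_C(c)\triangleleft\alpha_H(h)$, which uses only the compatibility. For the Hom-associativity axiom (\ref{righthommod2}), recall that the multiplication in $H_{\alpha_H}$ is $h\cdot_{H_{\alpha_H}}h'=\alpha_H(hh')$, so the required identity becomes $(c\triangleleft h)\triangleleft\alpha_H(h')=\alpha_C(c)\triangleleft\alpha_H(hh')$. Expanding both sides using $c\triangleleft h=\alpha_C(c)\cdot\alpha_H(h)$ reduces this to $(\alpha_C^2(c)\cdot\alpha_H^2(h))\cdot\alpha_H^2(h')=\alpha_C^2(c)\cdot\alpha_H^2(hh')$, which is just the classical associativity of the right $H$-action on $C$ applied to $\alpha_C^2(c)$ and $\alpha_H^2(h),\alpha_H^2(h')$.

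The one computation that requires a bit of care is the right-module Hom-algebra condition, which in the $H_{\alpha_H}$/$C_{\alpha_C}$ setting reads
\begin{equation*}
\mu_{\alpha_C}(c,c')\triangleleft\alpha_H^2(h)=\mu_{\alpha_C}\bigl(c\triangleleft\Delta_{\alpha_H}(h)_1,\;c'\triangleleft\Delta_{\alpha_H}(h)_2\bigr),
\end{equation*}
where $\mu_{\alpha_C}(c,c')=\alpha_C(cc')$ and $\Delta_{\alpha_H}(h)=\alpha_H(h_1)\otimes\alpha_H(h_2)$ (the latter using that $\alpha_H$ is a coalgebra morphism). The left side unfolds to $\alpha_C^2(cc')\cdot\alpha_H^3(h)$, which by the classical right module algebra axiom for $C$ equals $\sum(\alpha_C^2(c)\cdot\alpha_H^3(h_1))(\alpha_C^2(c')\cdot\alpha_H^3(h_2))$. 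The right side unfolds to $\alpha_C\bigl((\alpha_C(c)\cdot\alpha_H^2(h_1))(\alpha_C(c')\cdot\alpha_H^2(h_2))\bigr)$, and pushing $\alpha_C$ through the product (since it is an algebra map) and then through each factor (via the compatibility hypothesis) gives exactly the same expression.

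There is no real obstacle; the main point to be careful about is matching the appropriate powers of $\alpha_H$ and $\alpha_C$ coming from the definitions $\mu_{\alpha_H}=\alpha_H\circ\mu_H$, $\Delta_{\alpha_H}=\Delta_H\circ\alpha_H$, $\mu_{\alpha_C}=\alpha_C\circ\mu_C$, and $c\triangleleft h=\alpha_C(c)\cdot\alpha_H(h)$. Once these are tracked correctly, every step reduces to a single invocation of one of (i)--(iv) above, so the whole argument is a straightforward bookkeeping exercise parallel to the proof of Proposition~\ref{deformmodalg}.
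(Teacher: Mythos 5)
Your verification is correct, and it is exactly what the paper intends: the paper gives no proof of this right-handed statement (it explicitly leaves the proofs of these analogues to the reader, the left-handed case being Proposition \ref{deformmodalg} quoted from Yau), and your computation is the straightforward mirror argument with the powers of $\alpha_C$ and $\alpha_H$ tracked correctly, e.g. both sides of the module Hom-algebra condition reducing to $(\alpha_C^2(c)\cdot\alpha_H^3(h_1))(\alpha_C^2(c')\cdot\alpha_H^3(h_2))$.
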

\begin{theorem}
Let $(H, \mu _H, \Delta _H, \alpha _H)$ be a Hom-bialgebra, 
$(C, \mu _C, \alpha _C)$ a right $H$-module Hom-algebra,  
with action denoted by $C\ot H\rightarrow C$, $c\ot h\mapsto c\cdot h$, and 
assume that the structure maps $\alpha _H$ and $\alpha _C$ are both bijective. 
Define the linear map 
\begin{eqnarray}
&&R:C\ot H\rightarrow H\ot C, \;\;\;R(c\ot h)=\alpha _H^{-1}(h_1) \ot \alpha _C^{-1}(c)\cdot 
\alpha _H^{-2}(h_2).
\label{twmapright}
\end{eqnarray}
Then $R$ is a Hom-twisting map between $H$ and $C$. Consequently, we can consider the 
Hom-associative algebra $H\ot _RC$, which is denoted by $H\# C$ (we denote $h\ot c:=h\# c$, 
for $c\in C$, $h\in H$) and called the {\em Hom-smash product} of $H$ and $C$.  
The structure map of $H\# C$ is $\alpha _H\ot \alpha _C$ and its multiplication is 
\begin{eqnarray*}
&&(h\# c)(h'\# c')=h\alpha _H^{-1}(h'_1)\# (\alpha _C^{-1}(c)\cdot \alpha _H^{-2}(h'_2))c'. 
\end{eqnarray*}
\end{theorem}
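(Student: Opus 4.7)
The plan is to mirror the proof of Theorem~\ref{smashThm} in the right-handed setting by verifying the three defining conditions (\ref{homtwmap0}), (\ref{homtwmap1}), (\ref{homtwmap2}) directly for the map $R$ given by (\ref{twmapright}). Each verification is a Sweedler-style computation in which one systematically pushes powers of $\alpha _H^{-1}$ and $\alpha _C^{-1}$ through multiplications and comultiplications using the Hom-bialgebra axioms (\ref{hombia1})--(\ref{hombia3}) together with the right-handed module axioms (\ref{righthommod1}), (\ref{righthommod2}) and the right-handed module-algebra compatibility. No new ideas are needed beyond carefully tracking the shifts in the powers of the structure maps.

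For condition (\ref{homtwmap0}), applying $R$ after $\alpha _C\ot \alpha _H$ and using (\ref{hombia3}) on $\Delta _H(\alpha _H(h))$ yields $h_1\ot c\cdot \alpha _H^{-1}(h_2)$, which agrees, via (\ref{righthommod1}), with $(\alpha _H\ot \alpha _C)\circ R$. For condition (\ref{homtwmap1}), applied to $c\ot h\ot h'$, the right-hand side expands to $\alpha _H^{-1}(h_1)\alpha _H^{-1}(h'_1)\ot \alpha _C\bigl(\alpha _C^{-1}(\alpha _C^{-1}(c)\cdot \alpha _H^{-2}(h_2))\cdot \alpha _H^{-2}(h'_2)\bigr)$; I would remove the outer $\alpha _C$ via (\ref{righthommod1}) and then collapse the iterated right action via (\ref{righthommod2}), obtaining $\alpha _H^{-1}(h_1h'_1)\ot c\cdot \alpha _H^{-2}(h_2h'_2)$, which matches the left-hand side after (\ref{hombia2}). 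This collapse of two successive right actions plays here the role that (\ref{hommod2}) played in Theorem~\ref{smashThm}.

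The main obstacle is condition (\ref{homtwmap2}), which is where the right-handed module-algebra compatibility must enter. Applied to $c\ot c'\ot h$, the right-hand side produces, after using (\ref{hombia3}) to handle the inner coproduct $\Delta _H(\alpha _H^{-1}(h_1))$, an expression of the form $\alpha _H^{-1}((h_1)_1)\ot \bigl(\alpha _C^{-1}(c)\cdot \alpha _H^{-3}((h_1)_2)\bigr)\bigl(\alpha _C^{-1}(c')\cdot \alpha _H^{-2}(h_2)\bigr)$. The key step is to rewrite (\ref{hombia1}) in the form $\alpha _H^{-1}((h_1)_1)\ot (h_1)_2\ot h_2=h_1\ot (h_2)_1\ot \alpha _H^{-1}((h_2)_2)$, which re-indexes so that the two Sweedler legs feeding the two right actions become the components of a single coproduct $\Delta _H(\alpha _H^{-3}(h_2))$ (one uses (\ref{hombia3}) iteratively to move $\alpha _H^{-3}$ inside the coproduct). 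The right-handed module-algebra compatibility, together with the multiplicativity of $\alpha _C^{-1}$, then combines these into $\alpha _C^{-1}(cc')\cdot \alpha _H^{-1}(h_2)$, matching the left-hand side $h_1\ot \alpha _C^{-1}(cc')\cdot \alpha _H^{-1}(h_2)$. The only real risk is a bookkeeping error in counting the powers of $\alpha _H$ when pulling the coproduct through the inverse structure maps; once the relabeling via (\ref{hombia1}) is set up correctly, everything else is routine.
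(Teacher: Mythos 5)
Your proof is correct and is precisely the argument the paper intends, since it leaves this right-handed statement to the reader as the mirror image of its proof of Theorem~\ref{smashThm}. Your verifications of (\ref{homtwmap0})--(\ref{homtwmap2}) — the collapse of the iterated right action via (\ref{righthommod1}) and (\ref{righthommod2}) in (\ref{homtwmap1}), and the re-indexing of (\ref{hombia1}) combined with (\ref{hombia3}) and the right module Hom-algebra compatibility in (\ref{homtwmap2}) — carry the correct powers of $\alpha _H$ and $\alpha _C$ and reproduce that mirrored computation.
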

\begin{proposition}
In the hypotheses of and with notation as in Proposition \ref{rightdefmodalg}, and 
assuming moreover that the maps $\alpha _H$ and $\alpha _C$ are both bijective, 
if we denote by 
$H\# C$ the usual smash product between $H$ and $C$ (whose multiplication is 
$(h\# c)(h'\# c')=hh'_1\# (c\cdot h'_2)c'$), 
then $\alpha _H\ot \alpha _C$ is an 
algebra endomorphism of $H\# C$ and the Hom-associative algebras 
$(H\# C)_{\alpha _H\ot \alpha _C}$ and $H_{\alpha _H}\# C_{\alpha _C}$ coincide.  
\end{proposition}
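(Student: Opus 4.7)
My plan is to mimic exactly the strategy used in the proof of the left-handed analogue (the earlier proposition just before \prref{smrca}), substituting the right-handed twisting map. The key observation is that the ordinary smash product $H\# C$ can be written as a twisted tensor product $H\ot _P C$ where $P:C\ot H\rightarrow H\ot C$ is the classical twisting map $P(c\ot h)=h_1\ot c\cdot h_2$. So I would reduce the proposition to checking a compatibility between $P$ and the structure maps, and then identify the Hom-twisting map \eqref{twmapright} with $P$ after passing to the twisted Hom-structures.

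First I would verify that $(\alpha _H\ot \alpha _C)\circ P=P\circ (\alpha _C\ot \alpha _H)$. This is a direct calculation: the left side sends $c\ot h$ to $\alpha _H(h_1)\ot \alpha _C(c\cdot h_2)$, the right side sends it to $\alpha _H(h)_1\ot \alpha _C(c)\cdot \alpha _H(h)_2$, and the two agree by the coalgebra endomorphism property of $\alpha _H$ together with the hypothesis $\alpha _C(c\cdot h)=\alpha _C(c)\cdot \alpha _H(h)$. This also establishes that $\alpha _H\ot \alpha _C$ is an algebra endomorphism of $H\# H$.  Applying \prref{deformttp} (the right-handed role of $A$ is played by $H$ and of $B$ by $C$) then yields that $(H\# C)_{\alpha _H\ot \alpha _C}=H_{\alpha _H}\ot _PC_{\alpha _C}$ as Hom-associative algebras.

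It remains to show that the Hom-twisting map $R$ of formula \eqref{twmapright}, computed using the twisted structures $H_{\alpha _H}$ (whose comultiplication is $\Delta _H\circ \alpha _H$) and $C_{\alpha _C}$ (whose action is $c\triangleleft h=\alpha _C(c)\cdot \alpha _H(h)=\alpha _C(c\cdot h)$), actually equals $P$. This is the only place a small computation is required, and it is the direct analogue of the final computation in the proof of the left-handed version:
\begin{eqnarray*}
R(c\ot h)&=&\alpha _H^{-1}(\alpha _H(h)_1)\ot \alpha _C^{-1}(c)\triangleleft \alpha _H^{-2}(\alpha _H(h)_2)\\
&=&\alpha _H^{-1}(\alpha _H(h_1))\ot \alpha _C(\alpha _C^{-1}(c)\cdot \alpha _H^{-1}(h_2))\\
&=&h_1\ot c\cdot h_2=P(c\ot h),
\end{eqnarray*}
where I use comultiplicativity of $\alpha _H$ and the definition of $\triangleleft $ together with the intertwining $\alpha _C(c\cdot h)=\alpha _C(c)\cdot \alpha _H(h)$. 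Combined with the previous paragraph this finishes the proof.

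There is no serious obstacle: the entire argument is parallel to the left-handed case, and the main sanity check is just that the powers of $\alpha _H^{-1}$ and $\alpha _C^{-1}$ in \eqref{twmapright} cancel correctly against the single $\alpha _H$ and $\alpha _C$ hidden inside the twisted comultiplication and the twisted action. Once this final identification $R=P$ is verified, the statement follows immediately from \prref{deformttp}.
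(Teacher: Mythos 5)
Your proof is correct and follows exactly the route the paper intends: it mirrors, step for step, the paper's proof of the left-handed version (realizing $H\# C$ as $H\ot _PC$ with $P(c\ot h)=h_1\ot c\cdot h_2$, checking $(\alpha _H\ot \alpha _C)\circ P=P\circ (\alpha _C\ot \alpha _H)$, invoking Proposition \ref{pr:deformttp}, and verifying $R=P$), which is precisely the analogue the paper leaves to the reader. The only blemish is the typo ``$H\# H$'' where you mean $H\# C$.
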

\begin{proposition}
Let $(H, \mu _H, \Delta _H, \alpha _H)$ be a Hom-bialgebra and  
$(C, \mu _C, \alpha _C)$ a right $H$-module Hom-algebra, with action denoted by 
$C\ot H\rightarrow C$, $c\ot h\mapsto c\cdot h$, such that the structure maps 
$\alpha _H$ and $\alpha _C$ are both bijective. Then the Hom-smash product 
$H\# C$ is a left $H$-comodule Hom-algebra, via the map 
$\lambda _{H\# C}:H\# C\rightarrow H\ot (H\# C)$, 
$\lambda _{H\# C}(h\# c)=h_1\ot (h_2\# \alpha _C(c))$. 
\end{proposition}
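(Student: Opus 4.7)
The plan is to verify directly the three conditions that make $\lambda_{H\#C}$ into a left $H$-comodule Hom-algebra structure on $(H\#C,\alpha_H\ot\alpha_C)$: compatibility with the structure maps (\ref{leftcom1}), Hom-coassociativity of the coaction (\ref{leftcom2}), and the fact that $\lambda_{H\#C}$ is a morphism of Hom-associative algebras. The approach closely mirrors that of \prref{smrca}, with the roles of ``left'' and ``right'' interchanged.

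First I would check (\ref{leftcom1}): evaluating both $(\alpha_H\ot \alpha_H\ot \alpha_C)\circ \lambda_{H\#C}$ and $\lambda_{H\#C}\circ (\alpha_H\ot \alpha_C)$ on $h\#c$ reduces to applying (\ref{hombia3}) to $\Delta_H(h)$. Next, the coassociativity condition (\ref{leftcom2}) amounts to the identity
\begin{eqnarray*}
(h_1)_1\ot (h_1)_2\ot \alpha_H(h_2)\# \alpha_C^2(c)= \alpha_H(h_1)\ot (h_2)_1\ot (h_2)_2\# \alpha_C^2(c),
\end{eqnarray*}
which is exactly (\ref{hombia1}) applied to $h$ (tensored with $\alpha_C^2(c)$ in the last slot).

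The main work, as expected, is proving multiplicativity of $\lambda_{H\#C}$. I would compute both sides on a product $(h\# c)(h'\# c')$. On the one hand, using the multiplication of $H\# C$ and then applying $\lambda_{H\#C}$, after using (hombia2), (hombia3) and the multiplicativity of $\alpha_C$ one gets
\begin{eqnarray*}
\lambda_{H\#C}((h\#c)(h'\#c'))= h_1\alpha_H^{-1}((h'_1)_1)\ot h_2\alpha_H^{-1}((h'_1)_2)\#(c\cdot\alpha_H^{-1}(h'_2))\alpha_C(c').
\end{eqnarray*}
On the other hand, the product $\lambda_{H\#C}(h\#c)\lambda_{H\#C}(h'\#c')$ in the tensor product Hom-associative algebra $H\ot(H\#C)$, using the defining formula of the right Hom-smash product, evaluates to
\begin{eqnarray*}
h_1h'_1\ot h_2\alpha_H^{-1}((h'_2)_1)\#(c\cdot\alpha_H^{-2}((h'_2)_2))\alpha_C(c').
\end{eqnarray*}
The two expressions are matched by invoking (\ref{hombia1}) applied to $h'$, which after applying $\alpha_H^{-1}$ to two of the three tensor slots takes the form $\alpha_H^{-1}((h'_1)_1)\ot \alpha_H^{-1}((h'_1)_2)\ot h'_2 = h'_1\ot \alpha_H^{-1}((h'_2)_1)\ot \alpha_H^{-1}((h'_2)_2)$, and substituting this into the first expression produces the second.

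The main obstacle is purely bookkeeping: keeping track of the various powers of $\alpha_H^{-1}$ attached to each Sweedler component and choosing the correct form of (\ref{hombia1}) to make the two sides meet. No deeper input is required beyond the Hom-bialgebra axioms (\ref{hombia1})--(\ref{hombia3}), the right-module Hom-algebra compatibility (\ref{righthommod1}), and the definition of the right Hom-smash product multiplication.
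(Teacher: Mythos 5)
Your proposal is correct and follows exactly the argument the paper intends for this right-handed analogue (whose proof it leaves to the reader), namely the mirror image of the proof of Proposition \ref{smrca}: direct verification of (\ref{leftcom1}), (\ref{leftcom2}) and multiplicativity, with both displayed expressions and the $\alpha_H^{-1}$-twisted form of (\ref{hombia1}) checking out. The only cosmetic point is that the version of (\ref{hombia1}) actually substituted carries one more $\alpha_H^{-1}$ in the last slot than the one you display, which is immediate by applying $id\ot id\ot\alpha_H^{-1}$.
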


We are now in the position to define the two-sided Hom-smash product, as a particular case 
of an iterated Hom-twisted tensor product. 
\begin{proposition}
Let $(H, \mu _H, \Delta _H, \alpha _H)$ be a Hom-bialgebra, $(A, \mu _A, \alpha _A)$ a left 
$H$-module Hom-algebra and 
$(C, \mu _C, \alpha _C)$ a right $H$-module Hom-algebra, with actions denoted by 
$H\ot A\rightarrow A$, $h\ot a\mapsto h\cdot a$ and 
$C\ot H\rightarrow C$, $c\ot h\mapsto c\cdot h$, and assume that the structure maps 
$\alpha _H$, $\alpha _A$, $\alpha _C$ are bijective. 
Consider the Hom-twisting maps 
defined by (\ref{twmapleft}) and (\ref{twmapright}), namely 
\begin{eqnarray*}
&&R_1:H\ot A\rightarrow A\ot H, \;\;\;R_1(h\ot a)=\alpha _H^{-2}(h_1)\cdot \alpha _A^{-1}(a)\ot 
\alpha _H^{-1}(h_2), \\
&&R_2:C\ot H\rightarrow H\ot C, \;\;\;R_2(c\ot h)=\alpha _H^{-1}(h_1) \ot \alpha _C^{-1}(c)\cdot 
\alpha _H^{-2}(h_2), 
\end{eqnarray*}
as well as the trivial Hom-twisting map $R_3:C\ot A\rightarrow A\ot C$, $R_3(c\ot a)=a\ot c$. Then 
$R_1$, $R_2$, $R_3$ satisfy the braid relation, so, by Theorem \ref{homiterated}, we can consider 
the iterated Hom-twisted tensor product $A\ot _{R_1}H\ot _{R_2}C$, which will be denoted by 
$A\# H\# C$ and will be called the {\em two-sided Hom-smash product}. Its multiplication is defined 
by 
\begin{eqnarray*}
&&(a\# h\# c)(a'\# h'\# c')=a(\alpha _H^{-2}(h_1)\cdot \alpha _A^{-1}(a'))\# 
\alpha _H^{-1}(h_2h'_1)\#  (\alpha _C^{-1}(c)\cdot \alpha _H^{-2}(h'_2))c', 
\end{eqnarray*} 
and its structure map is $\alpha _A\ot \alpha _H\ot \alpha _C$. 
\end{proposition}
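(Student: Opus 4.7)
The plan is to apply \thref{homiterated} with $B=H$, which requires checking three things: that $R_3$ is a Hom-twisting map, that the braid condition \eqref{hombraid} holds for the triple $(R_1, R_2, R_3)$, and then unwinding the resulting multiplication formula. The maps $R_1$ and $R_2$ are already Hom-twisting maps by \thref{smashThm} and its right-handed counterpart, and $R_3(c\ot a)=a\ot c$ is the flip, for which \eqref{homtwmap0}--\eqref{homtwmap2} reduce to multiplicativity of $\alpha_A$ and $\alpha_C$.

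The real content is the braid relation. I would evaluate both sides on a generic $c\ot h\ot a\in C\ot H\ot A$. Tracing $(id_A\ot R_2)\circ(R_3\ot id_H)\circ(id_C\ot R_1)$ and using that $\Delta_H$ commutes with $\alpha_H^{-1}$ in the form $\Delta_H(\alpha_H^{-1}(h_2))=\alpha_H^{-1}((h_2)_1)\ot\alpha_H^{-1}((h_2)_2)$ (which follows from \eqref{hombia3} and bijectivity of $\alpha_H$), one obtains
\[
\alpha_H^{-2}(h_1)\cdot\alpha_A^{-1}(a)\;\ot\;\alpha_H^{-2}((h_2)_1)\;\ot\;\alpha_C^{-1}(c)\cdot\alpha_H^{-3}((h_2)_2).
\]
Tracing $(R_1\ot id_C)\circ(id_H\ot R_3)\circ(R_2\ot id_A)$ symmetrically yields
\[
\alpha_H^{-3}((h_1)_1)\cdot\alpha_A^{-1}(a)\;\ot\;\alpha_H^{-2}((h_1)_2)\;\ot\;\alpha_C^{-1}(c)\cdot\alpha_H^{-2}(h_2).
\]
Equality of these two expressions follows from Hom-coassociativity \eqref{hombia1}: applying $\alpha_H^{-3}\ot\alpha_H^{-2}\ot\alpha_H^{-3}$ to the identity $(h_1)_1\ot(h_1)_2\ot\alpha_H(h_2)=\alpha_H(h_1)\ot(h_2)_1\ot(h_2)_2$ yields exactly the matching of the three Sweedler tensor factors needed, after inserting $\cdot\alpha_A^{-1}(a)$ on the left and $\alpha_C^{-1}(c)\cdot$ on the right.

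Once the braid condition is verified, \thref{homiterated} directly produces the iterated Hom-twisted tensor product $A\ot_{R_1}H\ot_{R_2}C$ with structure map $\alpha_A\ot\alpha_H\ot\alpha_C$ and multiplication
\[
(a\ot h\ot c)(a'\ot h'\ot c')=a(a'_{R_3})_{R_1}\ot h_{R_1}h'_{R_2}\ot(c_{R_3})_{R_2}c',
\]
as recorded at the end of the proof of \thref{homiterated}. Because $R_3$ is the flip, one has $a'_{R_3}=a'$ and $c_{R_3}=c$, so substituting the explicit formulas \eqref{twmapleft} and \eqref{twmapright} for $R_1$ and $R_2$, and using multiplicativity of $\alpha_H^{-1}$ to combine $\alpha_H^{-1}(h_2)\alpha_H^{-1}(h'_1)=\alpha_H^{-1}(h_2h'_1)$, yields the stated formula for the product in $A\# H\# C$.

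The main obstacle is purely bookkeeping in the braid relation: the two sides have different distributions of $\alpha_H^{-1}$ across the three tensor factors, and one must invoke Hom-coassociativity in exactly the right form (and use bijectivity of $\alpha_H$ to shuffle powers through the coproduct) to see that they coincide. No deeper structural ingredient is needed; everything else is routine assembly of earlier results.
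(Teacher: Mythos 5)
Your proposal is correct and follows essentially the same route as the paper: verify the braid relation by evaluating both composites on $c\ot h\ot a$, reduce the comparison to Hom-coassociativity (\ref{hombia1}) combined with (\ref{hombia3}) and bijectivity of $\alpha_H$ (your two intermediate expressions are exactly the ones in the paper's computation), and then read off the multiplication from Theorem \ref{homiterated} using that $R_3$ is the flip. Nothing is missing.
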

\begin{proof}
We only need to prove the braid relation. We compute:\\[2mm]
${\;\;\;\;}$
$((id_A\ot R_2)\circ (R_3\ot id_H)\circ (id_C\ot R_1))(c\ot h\ot a)$
\begin{eqnarray*}
&=&((id_A\ot R_2)\circ (R_3\ot id_H))(c\ot \alpha _H^{-2}(h_1)\cdot \alpha _A^{-1}(a)\ot 
\alpha _H^{-1}(h_2))\\
&=&(id_A\ot R_2)(\alpha _H^{-2}(h_1)\cdot \alpha _A^{-1}(a)\ot c\ot 
\alpha _H^{-1}(h_2))\\
&=&\alpha _H^{-2}(h_1)\cdot \alpha _A^{-1}(a)\ot \alpha _H^{-1}(\alpha _H^{-1}(h_2)_1)
\ot \alpha _C^{-1}(c)\cdot \alpha _H^{-2}(\alpha _H^{-1}(h_2)_2)\\
&\overset{(\ref{hombia3})}{=}&\alpha _H^{-2}(h_1)\cdot \alpha _A^{-1}(a)\ot 
\alpha _H^{-2}((h_2)_1)
\ot \alpha _C^{-1}(c)\cdot \alpha _H^{-3}((h_2)_2), 
\end{eqnarray*}
${\;\;\;\;}$
$((R_1\ot id_C)\circ (id_H\ot R_3)\circ (R_2\ot id_A))(c\ot h\ot a)$
\begin{eqnarray*}
&=&((R_1\ot id_C)\circ (id_H\ot R_3))(\alpha _H^{-1}(h_1)\ot \alpha _C^{-1}(c)\cdot 
\alpha _H^{-2}(h_2)\ot a)\\
&=&(R_1\ot id_C)(\alpha _H^{-1}(h_1)\ot a\ot \alpha _C^{-1}(c)\cdot 
\alpha _H^{-2}(h_2))\\
&=&\alpha _H^{-2}(\alpha _H^{-1}(h_1)_1)\cdot \alpha _A^{-1}(a)\ot 
\alpha _H^{-1}(\alpha _H^{-1}(h_1)_2)\ot \alpha _C^{-1}(c)\cdot \alpha _H^{-2}(h_2)\\
&\overset{(\ref{hombia3})}{=}&\alpha _H^{-3}((h_1)_1)\cdot \alpha _A^{-1}(a)\ot 
\alpha _H^{-2}((h_1)_2)\ot \alpha _C^{-1}(c)\cdot \alpha _H^{-2}(h_2)\\
&\overset{(\ref{hombia1})}{=}&\alpha _H^{-2}(h_1)\cdot \alpha _A^{-1}(a)\ot 
\alpha _H^{-2}((h_2)_1)\ot \alpha _C^{-1}(c)\cdot \alpha _H^{-3}((h_2)_2), 
\end{eqnarray*}
finishing the proof.
\end{proof}
\section{Hom-associative algebras obtained from associative algebras}
\setcounter{equation}{0}
${\;\;\;}$Our aim now is to show that two procedures recalled in the Preliminaries, the one of twisting an 
associative algebra by a pseudotwistor to obtain another associative algebra and Yau's procedure 
of twisting an associative algebra by an endomorphism to obtain a Hom-associative algebra 
admit a common generalization. 
\begin{theorem} \label{alphapseudo}
Let $(A, \mu )$ be an associative algebra, $\alpha :A\rightarrow A$ an algebra endomorphism, 
$T:A\ot A\rightarrow A\ot A$ and $\tilde{T}_1,
\tilde{T}_2:A\otimes A\otimes A \rightarrow A\otimes A\otimes A$ linear maps, satisfying the 
following conditions: 
\begin{eqnarray}
&&(\alpha \ot \alpha )\circ T=T\circ (\alpha \ot \alpha ),  \label{multT} \\
&&T\circ (id_A\otimes \mu )=
(id_A\otimes \mu )\circ \tilde{T}_1\circ (T\otimes id_A),
\label{pstwhom1} \\
&&T\circ (\mu \otimes id_A)=
(\mu \otimes id_A)\circ \tilde{T}_2\circ (id_A\otimes T),
\label{pstwhom2} \\
&&\tilde{T}_1\circ (T\otimes id_A)\circ (\alpha \otimes T)=
\tilde{T}_2\circ (id_A\otimes T)\circ (T\otimes \alpha ). \label{pstwhom3}
\end{eqnarray}
Then $(A, \mu \circ T, \alpha )$ is a Hom-associative algebra, which is denoted by $A^T_{\alpha }$.  
The map $T$ is called an
{\em $\alpha $-pseudotwistor} and the two maps $\tilde{T}_1$,
$\tilde{T}_2$ are called the {\em companions} of $T$.
\end{theorem}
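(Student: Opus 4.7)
The plan is to mirror the proof of Proposition \ref{hompseudotwistor}, adapted so that the asymmetric axiom (\ref{pstwhom3}) fits in at the right place. Multiplicativity of $\alpha$ for the new product $\mu\circ T$ is immediate: from (\ref{multT}) and the fact that $\alpha$ is an algebra endomorphism of $(A,\mu)$, one has $\alpha\circ(\mu\circ T)=\mu\circ(\alpha\otimes\alpha)\circ T=\mu\circ T\circ(\alpha\otimes\alpha)$. The real content is Hom-associativity.

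I would first record the bookkeeping identities $(\mu\circ T)\otimes\alpha=(\mu\otimes\alpha)\circ(T\otimes id_A)$ and $\alpha\otimes(\mu\circ T)=(\alpha\otimes\mu)\circ(id_A\otimes T)$ (the analogues of (\ref{ajut1})--(\ref{ajut2})), together with the elementary factorizations $\mu\otimes\alpha=(\mu\otimes id_A)\circ(id_A\otimes id_A\otimes\alpha)$ and $(id_A\otimes id_A\otimes\alpha)\circ(T\otimes id_A)=T\otimes\alpha$. Starting from $(\mu\circ T)\circ((\mu\circ T)\otimes\alpha)$, these identities, together with a single application of (\ref{pstwhom2}) to rewrite $T\circ(\mu\otimes id_A)$ as $(\mu\otimes id_A)\circ\tilde T_2\circ(id_A\otimes T)$, bring the expression to
\[
\mu\circ(\mu\otimes id_A)\circ\tilde T_2\circ(id_A\otimes T)\circ(T\otimes\alpha).
\]

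At this point hypothesis (\ref{pstwhom3}) applies exactly as stated, converting the trailing composite into $\tilde T_1\circ(T\otimes id_A)\circ(\alpha\otimes T)$. Classical associativity of $\mu$ then permits the substitution $\mu\circ(\mu\otimes id_A)=\mu\circ(id_A\otimes\mu)$, the reverse direction of (\ref{pstwhom1}) condenses $(id_A\otimes\mu)\circ\tilde T_1\circ(T\otimes id_A)$ into $T\circ(id_A\otimes\mu)$, and the dual bookkeeping identity $(id_A\otimes\mu)\circ(\alpha\otimes T)=\alpha\otimes(\mu\circ T)$ delivers $(\mu\circ T)\circ(\alpha\otimes(\mu\circ T))$, as required. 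The only step carrying genuine content is the central application of (\ref{pstwhom3}): the particular placement of the $\alpha$'s in that hypothesis is engineered so that, after the $\alpha$ coming from $(\mu\circ T)\otimes\alpha$ is propagated inward through (\ref{pstwhom2}) and the factorization $\mu\otimes\alpha=(\mu\otimes id_A)\circ(id_A\otimes id_A\otimes\alpha)$, it lands exactly in the third slot of $T\otimes\alpha$. This is the one piece of the argument that does not reduce to routine tensor-factor bookkeeping, and it constitutes the only real obstacle in the proof.
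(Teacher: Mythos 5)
Your proposal is correct and follows essentially the same route as the paper: the same two bookkeeping identities $(\mu\circ T)\otimes\alpha=(\mu\otimes id_A)\circ(T\otimes\alpha)$ and $\alpha\otimes(\mu\circ T)=(id_A\otimes\mu)\circ(\alpha\otimes T)$ (you just derive the first one in two small steps), followed by the same chain (\ref{pstwhom2}), (\ref{pstwhom3}), associativity of $\mu$, (\ref{pstwhom1}). Nothing is missing.
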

\begin{proof}
We record first the obvious relations 
\begin{eqnarray}
&&(\mu \circ T)\ot \alpha =(\mu \ot id_A )\circ (T\ot \alpha), \label{ajut3} \\
&&\alpha \ot (\mu \circ T)=(id_A \ot \mu )\circ (\alpha \ot T). \label{ajut4}
\end{eqnarray}
The fact that $\alpha $ is multiplicative with respect to $\mu \circ T$ follows immediately from 
(\ref{multT}) and the fact that $\alpha $ is multiplicative with respect to $\mu $, so we only have to  
prove the Hom-associativity of $\mu \circ T$. We compute: 
\begin{eqnarray*}
(\mu \circ T)\circ ((\mu \circ T)\otimes \alpha )
&\overset{(\ref{ajut3})}{=}&\mu \circ T\circ
(\mu \otimes id_A )\circ (T\otimes \alpha)\\
&\overset{(\ref{pstwhom2})}{=}&\mu \circ (\mu \otimes id_A )\circ \tilde{T}_2\circ 
(id_A\otimes T)\circ (T\otimes \alpha)\\
&\overset{(\ref{pstwhom3})}{=}&\mu \circ (\mu \otimes id_A)\circ \tilde{T}_1\circ
(T\otimes id_A)\circ (\alpha \otimes T)\\
&\overset{associativity\;of\;\mu }{=}&\mu \circ (id_A \otimes \mu )\circ \tilde{T}_1\circ 
(T\otimes id_A)\circ
(\alpha \otimes T)\\
&\overset{(\ref{pstwhom1})}{=}&\mu \circ T\circ (id_A \otimes \mu )
\circ (\alpha \otimes T)\\
&\overset{(\ref{ajut4})}{=}&(\mu \circ T)\circ (\alpha \otimes (\mu \circ T)),
\end{eqnarray*}
finishing the proof.
\end{proof}

Obviously, if $(A, \mu )$ is an associative algebra and we take $\alpha =id_A$, an $\alpha $-pseudotwistor 
is the same thing as a pseudotwistor and the Hom-associative algebra $A^T_{\alpha }$ is actually 
associative.

We show now that Yau's procedure is a particular case of Theorem \ref{alphapseudo}.    
\begin{proposition}
Let $(A, \mu )$ be an associative algebra and $\alpha :A\rightarrow A$ an algebra 
endomorphism. Define the maps 
\begin{eqnarray*}
&&T:A\ot A\rightarrow A\ot A, \;\;\;T=\alpha \ot \alpha, \\
&&\tilde{T}_1:A\otimes A\otimes A \rightarrow A\otimes A\otimes A, \;\;\;\tilde{T}_1=
id_A\ot id_A\ot \alpha , \\
&&\tilde{T}_2:A\otimes A\otimes A \rightarrow A\otimes A\otimes A, \;\;\; 
\tilde{T}_2=\alpha \ot id_A\ot id_A. 
\end{eqnarray*}
Then $T$ is an $\alpha $-pseudotwistor with companions $\tilde{T}_1$, $\tilde{T}_2$ and the 
Hom-associative algebras $A^T_{\alpha }$ and $A_{\alpha }$ coincide.  
\end{proposition}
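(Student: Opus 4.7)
The plan is to verify directly the four defining conditions (\ref{multT})--(\ref{pstwhom3}) of an $\alpha$-pseudotwistor from Theorem~\ref{alphapseudo}, and then to check that the resulting deformed product $\mu\circ T$ coincides with the product $\alpha\circ\mu$ of $A_\alpha$ from \prref{yautwisting}. Every step reduces to two elementary facts about $\alpha$: it commutes with itself, and it is multiplicative with respect to $\mu$. For this reason I do not expect any real obstacle; the whole argument is a matter of unpacking definitions.

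For the coincidence of the two Hom-associative algebras, $(\mu\circ T)(a\ot b)=\alpha(a)\alpha(b)=\alpha(ab)=(\alpha\circ\mu)(a\ot b)$ since $\alpha$ is an algebra endomorphism, giving $A^T_\alpha=A_\alpha$ immediately. Condition (\ref{multT}) is also immediate: both $(\alpha\ot\alpha)\circ T$ and $T\circ(\alpha\ot\alpha)$ equal $\alpha^2\ot\alpha^2$.

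For (\ref{pstwhom1}), applying both sides to $a\ot b\ot c$: the left-hand side produces $T(a\ot bc)=\alpha(a)\ot\alpha(b)\alpha(c)$, while the right-hand side successively yields $\alpha(a)\ot\alpha(b)\ot c$ after $T\ot id_A$, then $\alpha(a)\ot\alpha(b)\ot\alpha(c)$ after $\tilde T_1=id_A\ot id_A\ot\alpha$, and finally $\alpha(a)\ot\alpha(b)\alpha(c)$ after $id_A\ot\mu$. The check for (\ref{pstwhom2}) is entirely symmetric, both sides yielding $\alpha(a)\alpha(b)\ot\alpha(c)$ via $\tilde T_2=\alpha\ot id_A\ot id_A$.

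Finally, for the braid-type relation (\ref{pstwhom3}), a bookkeeping of where $\alpha$ is applied along each composition shows that both $\tilde T_1\circ(T\ot id_A)\circ(\alpha\ot T)$ and $\tilde T_2\circ(id_A\ot T)\circ(T\ot\alpha)$ send $a\ot b\ot c$ to $\alpha^2(a)\ot\alpha^2(b)\ot\alpha^2(c)$, essentially because in each composite every tensor slot gets hit by $\alpha$ exactly twice. This completes the verification, and the claim follows.
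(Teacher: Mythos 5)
Your verification is correct and follows essentially the same route as the paper: a direct element-wise check of conditions (\ref{multT})--(\ref{pstwhom3}) on $a\ot b\ot c$, with the braid condition (\ref{pstwhom3}) reducing on both sides to $\alpha ^2(a)\ot \alpha ^2(b)\ot \alpha ^2(c)$, and the identity $(\mu \circ T)(a\ot b)=\alpha (ab)$ giving $A^T_{\alpha }=A_{\alpha }$. Nothing is missing.
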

\begin{proof}
The condition (\ref{multT}) is obviously satisfied. We check (\ref{pstwhom1}), for $a, b, c\in A$: 
\begin{eqnarray*}
((id_A\otimes \mu )\circ \tilde{T}_1\circ (T\otimes id_A))(a\ot b\ot c)&=&
((id_A\otimes \mu )\circ \tilde{T}_1)(\alpha (a)\ot \alpha (b)\ot c)\\
&=&(id_A\ot \mu )(\alpha (a)\ot \alpha (b)\ot \alpha (c))\\
&=&\alpha (a)\ot \alpha (b)\alpha (c)\\
&=&\alpha (a)\ot \alpha (bc)\\
&=&T(a\ot bc)\\
&=&(T\circ (id_A\ot \mu ))(a\ot b\ot c), \;\;\;q.e.d.
\end{eqnarray*}
The condition (\ref{pstwhom2}) is similar, so we check (\ref{pstwhom3}): 
\begin{eqnarray*}
(\tilde{T}_1\circ (T\otimes id_A)\circ (\alpha \otimes T))(a\ot b\ot c)&=&
(\tilde{T}_1\circ (T\otimes id_A))(\alpha (a)\ot \alpha (b)\ot \alpha (c))\\
&=&\tilde{T}_1(\alpha ^2(a)\ot \alpha ^2(b)\ot \alpha (c))\\
&=&\alpha ^2(a)\ot \alpha ^2(b)\ot \alpha ^2(c)\\
&=&\tilde{T}_2(\alpha (a)\ot \alpha ^2(b)\ot \alpha ^2(c))\\
&=&(\tilde{T}_2\circ (id_A\ot T))(\alpha (a)\ot \alpha (b)\ot \alpha (c))\\
&=&(\tilde{T}_2\circ (id_A\ot T)\circ (T\ot \alpha ))(a\ot b\ot c), \;\;\;q.e.d.
\end{eqnarray*}
The fact that $A^T_{\alpha }$ and $A_{\alpha }$ coincide is obvious. 
\end{proof}
\begin{definition}
Let $(A, \mu _A)$ and $(B, \mu _B)$ 
be two associative algebras and $\alpha _A:A\rightarrow A$ and 
$\alpha _B:B\rightarrow B$ two bijective algebra endomorphisms. 
A linear map $R:B\ot A 
\rightarrow A\ot B$ is called {\em $(\alpha _A, \alpha _B)$-twisting map}  if the 
following conditions are satisfied: 
\begin{eqnarray}
&&(\alpha _A\ot \alpha _B)\circ R=R\circ (\alpha _B\ot \alpha _A), \label{ashom0} \\
&&R\circ (id_B\ot \mu _A)=(\mu _A\ot id_B)\circ (id_A\ot R)\circ (id_A\ot \alpha _B^{-1}\ot id_A) 
\circ (R\ot id_A),
\label{ashom1}\\
&&R\circ (\mu _B\ot id_A)=(id_A\ot \mu _B)\circ (R\ot id_B)\circ (id_B\ot \alpha _A^{-1}\ot id_B)
\circ (id_B\ot R).
\label{ashom2}
\end{eqnarray}
\end{definition}
\begin{proposition}
If $R$ is an $(\alpha _A, \alpha _B)$-twisting map as above, then the linear map 
\begin{eqnarray*}
&&T:(A\ot B)\ot (A\ot B)\rightarrow (A\ot B)\ot (A\ot B), \\
&&T((a\ot b)\ot (a'\ot b'))=(\alpha _A(a)\ot b_R)\ot (a'_R\ot \alpha _B(b')),
\end{eqnarray*} 
is an $\alpha _A\ot \alpha _B$-pseudotwistor for the associative algebra $A\ot B$, 
with companions 
\begin{eqnarray*}
&&\tilde{T}_1=T_{13}\circ (\alpha _A^{-1}\ot \alpha _B^{-1}\ot id_A\ot id_B\ot id_A\ot id_B),\\
&&\tilde{T}_2=T_{13}\circ (id_A\ot id_B\ot id_A\ot id_B\ot \alpha _A^{-1}\ot \alpha _B^{-1}).
\end{eqnarray*}
The Hom-associative algebra $(A\ot B)_{\alpha _A\ot \alpha _B}^T$, called the 
{\em $(\alpha _A, \alpha _B)$-twisted tensor product} of $A$ and $B$, is denoted by 
$A(\alpha _A)\ot _RB(\alpha _B)$; its multiplication is $(a\ot b)(a'\ot b')=\alpha _A(a)a'_R
\ot b_R\alpha _B(b')$. 
\end{proposition}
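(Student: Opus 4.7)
The strategy is to apply Theorem~\ref{alphapseudo} to the associative algebra $A \ot B$ equipped with the algebra endomorphism $\alpha_A \ot \alpha_B$, checking that the prescribed $T$, $\tilde{T}_1$, $\tilde{T}_2$ satisfy the four axioms (\ref{multT})--(\ref{pstwhom3}). Once these are in place, the stated multiplication is simply $\mu_{A \ot B} \circ T$ evaluated on $(a \ot b) \ot (a' \ot b')$. Throughout, I would work with the Sweedler-type notation $R(b \ot a) = a_R \ot b_R$, using distinct decorations $R, r, \rho, \ldots$ for independent applications of $R$, and I would constantly invoke (\ref{ashom0}) in the form $\alpha_A(a_R) \ot \alpha_B(b_R) = \alpha_A(a)_R \ot \alpha_B(b)_R$ to move $\alpha_A^{\pm 1}$ and $\alpha_B^{\pm 1}$ through $R$-symbols.

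Condition (\ref{multT}) for $T$ is immediate from (\ref{ashom0}) together with the multiplicativity of $\alpha_A$ and $\alpha_B$ on the two outer slots. For (\ref{pstwhom1}), I would expand both sides on a generic element $(a \ot b) \ot (a' \ot b') \ot (a'' \ot b'')$: the $\alpha_B^{-1}$ inserted in the first slot of $\tilde{T}_1$ cancels precisely the $\alpha_B$ that $T \ot id$ produces on $b$, and after $T_{13}$ and $id \ot \mu_{A \ot B}$ the identity to verify reduces to the Sweedler form $(a'a'')_R \ot b_R = a'_R a''_r \ot \alpha_B^{-1}(b_R)_r$, which is exactly (\ref{ashom1}). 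The verification of (\ref{pstwhom2}) is completely symmetric, using $\tilde{T}_2$, the corresponding $\alpha_A^{-1}$ correction, and (\ref{ashom2}).

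The step that looks most delicate is the braid-type condition (\ref{pstwhom3}), but on inspection it turns out to hold without any further axiom on $R$. When both sides are expanded on $(a \ot b) \ot (a' \ot b') \ot (a'' \ot b'')$, three applications of $R$ appear: one on the pair $(b, a')$, one on $(b', a'')$, and a third (produced by the $T_{13}$ hidden inside $\tilde{T}_1$ or $\tilde{T}_2$) on the resulting $R$-decorated pair $(b_R, a''_R)$. The two sides differ only in the order in which these three applications are introduced and in where the $\alpha_A^{\pm 1}, \alpha_B^{\pm 1}$ corrections are placed; using (\ref{ashom0}) to slide these endomorphisms through the $R$-symbols, both expressions collapse to the same Sweedler sum, with outer decorations $\alpha_A^2(a)$ on the leftmost $A$-entry and $\alpha_B^2(b'')$ on the rightmost $B$-entry. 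This also clarifies why the axioms (\ref{ashom0})--(\ref{ashom2}) on an $(\alpha_A, \alpha_B)$-twisting map are sufficient: no additional braid-type condition on $R$ is needed.

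Finally, once $T$ is confirmed to be an $(\alpha_A \ot \alpha_B)$-pseudotwistor with companions $\tilde{T}_1, \tilde{T}_2$, Theorem~\ref{alphapseudo} directly yields the Hom-associative algebra $(A \ot B)^T_{\alpha_A \ot \alpha_B}$ with product $\mu_{A \ot B} \circ T$. Evaluating, $(a \ot b)(a' \ot b') = \mu_{A \ot B}\bigl((\alpha_A(a) \ot b_R) \ot (a'_R \ot \alpha_B(b'))\bigr) = \alpha_A(a) a'_R \ot b_R \alpha_B(b')$, which is the displayed formula for $A(\alpha_A) \ot_R B(\alpha_B)$.
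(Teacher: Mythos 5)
Your proposal is correct and follows essentially the same route as the paper: verify (\ref{multT}) from (\ref{ashom0}), reduce (\ref{pstwhom1}) and (\ref{pstwhom2}) to the Sweedler forms (\ref{sweedashom1}) and (\ref{sweedashom2}) of the twisting-map axioms, and check (\ref{pstwhom3}) by sliding $\alpha_A^{\pm1},\alpha_B^{\pm1}$ through $R$ via (\ref{ashom0}). The only cosmetic difference is that the paper first cancels the common outer $T_{13}$ in (\ref{pstwhom3}) before expanding (so only two applications of $R$ appear per side), whereas you expand the full expressions including the third $R$; both computations land on the same element.
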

\begin{proof}
Note first that (\ref{ashom1}) and (\ref{ashom2}) may be written is Sweedler-type notation as: 
\begin{eqnarray}
&&(aa')_R\ot b_R=a_Ra'_r\ot \alpha _B^{-1}(b_R)_r, \label{sweedashom1} \\
&&a_R\ot (bb')_R=\alpha _A^{-1}(a_R)_r\ot b_rb'_R. \label{sweedashom2}
\end{eqnarray}
We need to check the conditions (\ref{multT})-(\ref{pstwhom3}) for $T$; (\ref{multT}) follows 
immediately from (\ref{ashom0}). \\[2mm]
\underline{Proof of  (\ref{pstwhom1})}: We compute:\\[2mm]
${\;\;\;\;}$$((id_A\ot id_B\ot \mu _{A\ot B})\circ \tilde{T}_1\circ (T\ot id_A\ot id_B))
(a\ot b\ot a'\ot b'\ot a''\ot b'')$
\begin{eqnarray*}
&=&((id_A\ot id_B\ot \mu _{A\ot B})\circ 
T_{13}\circ (\alpha _A^{-1}\ot \alpha _B^{-1}\ot id_A\ot id_B\\
&&\ot id_A\ot id_B))
(\alpha _A(a)\ot b_R\ot a'_R\ot \alpha _B(b')\ot a''\ot b'')\\
&=&((id_A\ot id_B\ot \mu _{A\ot B})\circ T_{13})(a\ot \alpha _B^{-1}(b_R)\ot a'_R\ot 
\alpha _B(b')\ot a''\ot b'')\\
&=&(id_A\ot id_B\ot \mu _{A\ot B})(\alpha _A(a)\ot \alpha _B^{-1}(b_R)_r\ot a'_R\ot 
\alpha _B(b')\ot a''_r\ot \alpha _B(b''))\\
&=&(\alpha _A(a)\ot \alpha _B^{-1}(b_R)_r\ot a'_Ra''_r\ot 
\alpha _B(b'b'')\\
&\overset{(\ref{sweedashom1})}{=}&\alpha _A(a)\ot b_R\ot 
(a'a'')_R\ot \alpha _B(b'b'')\\
&=&T(a\ot b\ot a'a''\ot b'b'')\\
&=&(T\circ (id_A\ot id_B\ot \mu _{A\ot B}))(a\ot b\ot a'\ot b'\ot a''\ot b''), \;\;\;q.e.d.
\end{eqnarray*}
\underline{Proof of  (\ref{pstwhom2})}: We compute:\\[2mm]
${\;\;\;\;}$$((\mu _{A\ot B}\ot id_A\ot id_B)\circ \tilde{T}_2\circ (id_A\ot id_B\ot T))
(a\ot b\ot a'\ot b'\ot a''\ot b'')$
\begin{eqnarray*}
&=&((\mu _{A\ot B}\ot id_A\ot id_B)\circ 
T_{13}\circ (id_A\ot id_B\ot id_A\ot id_B\\
&&\ot \alpha _A^{-1}\ot \alpha _B^{-1}))
(a\ot b\ot \alpha _A(a')\ot b'_R\ot a''_R\ot \alpha _B(b''))\\
&=&((\mu _{A\ot B}\ot id_A\ot id_B)\circ 
T_{13})(a\ot b\ot \alpha _A(a')\ot b'_R\ot \alpha _A^{-1}(a''_R)\ot b'')\\
&=&(\mu _{A\ot B}\ot id_A\ot id_B)(\alpha _A(a)\ot b_r\ot \alpha _A(a')\ot b'_R\ot 
\alpha _A^{-1}(a''_R)_r\ot \alpha _B(b''))\\
&=&\alpha _A(aa')\ot b_rb'_R\ot 
\alpha _A^{-1}(a''_R)_r\ot \alpha _B(b'')\\
&\overset{(\ref{sweedashom2})}{=}&\alpha _A(aa')\ot (bb')_R\ot 
a''_R\ot \alpha _B(b'')\\
&=&T(aa'\ot bb'\ot a''\ot b'')\\
&=&(T\circ (\mu _{A\ot B}\ot id_A\ot id_B))(a\ot b\ot a'\ot b'\ot a''\ot b''), \;\;\;q.e.d.
\end{eqnarray*}
\underline{Proof of  (\ref{pstwhom3})}: Because of how $\tilde{T}_1$ and $\tilde{T}_2$ are 
defined, it is enough to prove the following relation:
\begin{eqnarray*}
&&(\alpha _A^{-1}\ot \alpha _B^{-1}\ot id_A\ot id_B\ot id_A\ot id_B)\circ (T\ot id_A\ot id_B)
\circ (\alpha _A\ot \alpha _B\ot T)\\
&&=
(id_A\ot id_B\ot id_A\ot id_B\ot \alpha _A^{-1}\ot \alpha _B^{-1})\circ 
(id_A\ot id_B\ot T)\circ (T\ot \alpha _A\ot \alpha _B). 
\end{eqnarray*}
We compute:\\[2mm]
$((\alpha _A^{-1}\ot \alpha _B^{-1}\ot id_A\ot id_B\ot id_A\ot id_B)\circ (T\ot id_A\ot id_B)
\circ (\alpha _A\ot \alpha _B\ot T))(a\ot b\ot a'\ot b'\ot a''\ot b'')$
\begin{eqnarray*}
&=&((\alpha _A^{-1}\ot \alpha _B^{-1}\ot id_A\ot id_B\ot id_A\ot id_B)\circ (T\ot id_A\ot id_B))
(\alpha _A(a)\ot \alpha _B(b)\\
&&\ot \alpha _A(a')\ot b'_R\ot a''_R\ot \alpha _B(b''))\\
&=&(\alpha _A^{-1}\ot \alpha _B^{-1}\ot id_A\ot id_B\ot id_A\ot id_B)
(\alpha _A^2(a)\ot \alpha _B(b)_r\\
&&\ot \alpha _A(a')_r\ot \alpha _B(b'_R)\ot a''_R\ot \alpha _B(b''))\\
&\overset{(\ref{ashom0})}{=}&(\alpha _A^{-1}\ot \alpha _B^{-1}\ot id_A\ot id_B\ot id_A\ot id_B)
(\alpha _A^2(a)\ot \alpha _B(b_r)\\
&&\ot \alpha _A(a'_r)\ot \alpha _B(b'_R)\ot a''_R\ot \alpha _B(b''))\\
&=&\alpha _A(a)\ot b_r
\ot \alpha _A(a'_r)\ot \alpha _B(b'_R)\ot a''_R\ot \alpha _B(b''), 
\end{eqnarray*}
$((id_A\ot id_B\ot id_A\ot id_B\ot \alpha _A^{-1}\ot \alpha _B^{-1})\circ 
(id_A\ot id_B\ot T)\circ (T\ot \alpha _A\ot \alpha _B))(a\ot b\ot a'\ot b'\ot a''\ot b'')$
\begin{eqnarray*}
&=&((id_A\ot id_B\ot id_A\ot id_B\ot \alpha _A^{-1}\ot \alpha _B^{-1})\circ 
(id_A\ot id_B\ot T))(\alpha _A(a)\ot b_r\\
&&\ot a'_r\ot \alpha _B(b')\ot \alpha _A(a'')\ot \alpha _B(b''))\\
&=&(id_A\ot id_B\ot id_A\ot id_B\ot \alpha _A^{-1}\ot \alpha _B^{-1})(\alpha _A(a)\ot b_r\\
&&\ot \alpha _A(a'_r)\ot \alpha _B(b')_R\ot \alpha _A(a'')_R\ot \alpha _B^2(b''))\\
&\overset{(\ref{ashom0})}{=}&(id_A\ot id_B\ot id_A\ot id_B\ot \alpha _A^{-1}\ot 
\alpha _B^{-1})(\alpha _A(a)\ot b_r\\
&&\ot \alpha _A(a'_r)\ot \alpha _B(b'_R)\ot \alpha _A(a''_R)\ot \alpha _B^2(b''))\\
&=&\alpha _A(a)\ot b_r
\ot \alpha _A(a'_r)\ot \alpha _B(b'_R)\ot a''_R\ot \alpha _B(b''), 
\end{eqnarray*}
and the two terms are obviously equal. 
\end{proof}
\begin{example}
Let $(A, \mu _A)$ and $(B, \mu _B)$ 
be two associative algebras and $\alpha _A:A\rightarrow A$ and 
$\alpha _B:B\rightarrow B$ two bijective algebra endomorphisms. 
Define the map $R:B\ot A 
\rightarrow A\ot B$, $R(b\ot a)=\alpha _A(a)\ot \alpha _B(b)$. Then one can 
easily check that $R$ is an $(\alpha _A, \alpha _B)$-twisting map, and 
$A(\alpha _A)\ot _RB(\alpha _B)$ coincides with $(A\ot B)_{\alpha _A\ot \alpha _B}$ as Hom-associative 
algebras. More generally, assume that $P:B\ot A\rightarrow A\ot B$ is a twisting map such that 
$(\alpha _A\ot \alpha _B)\circ P=P\circ (\alpha _B\ot \alpha _A)$. Define the map 
$R:B\ot A\rightarrow A\ot B$, $R=(\alpha _A\ot \alpha _B)\circ P$. Then one can 
check that $R$ is an $(\alpha _A, \alpha _B)$-twisting map, and 
$A(\alpha _A)\ot _RB(\alpha _B)$ coincides with $(A\ot _PB)_{\alpha _A\ot \alpha _B}$ 
as Hom-associative 
algebras.
\end{example}
\begin{example}
This is a particular case of the previous example, but can also be checked directly. Let $(A, \mu _A)$ 
be an associative algebra, $\sigma :A\rightarrow A$ an involutive algebra automorphism of $A$ 
and $q\in k^*$. Take $B=C(k, q)=k[v]/(v^2=q)$, take $\alpha _A=\sigma $ and $\alpha _B=id$ and 
define $R:B\ot A\rightarrow A\ot B$ a linear map with $R(1\ot a)=\sigma (a)\ot 1$ and 
$R(v\ot a)=a\ot v$, for all $a\in A$. Then $R$ is an $(\alpha _A, \alpha _B)$-twisting map. By 
using the formula of $R$, one can easily see that the multiplication of the Hom-associative 
algebra $A(\alpha _A)\ot _RB(\alpha _B)$ is given by 
\begin{eqnarray*}
&&(a\otimes 1+b\otimes v)(c\otimes 1+d\otimes v)=(\sigma (ac)+q\sigma (b)d)\otimes 1
+(\sigma (ad)+\sigma (b)c)\otimes v,  
\end{eqnarray*}
for all $a, b, c, d\in A$. If we consider again the associative algebra $\overline{A}$ obtained 
from $A$ by the Clifford process and the algebra automorphism 
$\overline{\sigma }:\overline{A}\rightarrow \overline{A}$, $\overline{\sigma }(a\ot 1+b\ot v)=
\sigma (a)\ot 1+\sigma (b)\ot v$, then one can see that $A(\alpha _A)\ot _RB(\alpha _B)=
(\overline{A})_{\overline{\sigma }}$ as Hom-associative algebras. 
\end{example}

\end{document}